\documentclass{amsart}
\usepackage{amsmath} 
\usepackage{amssymb}
\usepackage{mathtools}
\usepackage{centernot}
\usepackage{stmaryrd}
\usepackage{esvect}
\usepackage{amsthm}
\usepackage{upgreek}
\usepackage{comment}
\usepackage{amsfonts}
\usepackage{physics}
\usepackage{tikz-cd}
\usepackage[utf8]{inputenc}
\usepackage[english]{babel}
\usepackage{graphicx}
\usepackage[margin=1in]{geometry}
\usepackage{mathtools}

\usepackage{float}

\usepackage{hyperref}
\usepackage{microtype}

\graphicspath{{images/}}

\renewcommand{\part}[1]{\noindent\textbf{Part #1)}}

\newcommand{\ba}{\begin{align*}}

\newcommand{\ea}{\end{align*}}

\newcommand{\Int}[1]{%
  {\kern0pt#1}^{\mathrm{o}}%
}


\renewcommand{\S}{\textbf{S}}

\newcommand{\bp}{\begin{pmatrix}}
\newcommand{\ep}{\end{pmatrix}}

\newcommand{\interior}[1]{%
  {\kern0pt#1}^{\mathrm{o}}%
}


%

\newcommand{\pmc}{\backslash \backslash}
\newcommand{\gnorm}[1]{\lvert \lvert {#1} \rvert \rvert} 

\setcounter{MaxMatrixCols}{30}

\newtheorem{theorem}{Theorem}[section]
\newtheorem*{theorem*}{Theorem}
\newtheorem{conjecture}[theorem]{Conjecture}

\newtheorem*{claim*}{Claim}
\newtheorem{corollary}[theorem]{Corollary}
\newtheorem{lemma}[theorem]{Lemma}

\theoremstyle{definition}
\newtheorem{definition}{Definition}[section]
\newtheorem{example}{Example}[section]

\makeatletter
\newtheorem*{rep@theorem}{\rep@title}
\newcommand{\newreptheorem}[2]{%
\newenvironment{rep#1}[1]{%
 \def\rep@title{#2 \ref{##1}}%
 \begin{rep@theorem}}%
 {\end{rep@theorem}}}
\makeatother

\newreptheorem{theorem}{Theorem}
\newreptheorem{lemma}{Lemma}
\newreptheorem{conjecture}{Conjecture}

\usepackage{xargs}                      
\usepackage{xcolor}
\usepackage[colorinlistoftodos,prependcaption,textsize=tiny]{todonotes}
%


\begin{document}

\title{TG-hyperbolicity of Composition of Virtual Knots}

\author[C. Adams]{Colin Adams}
\address{Department of Mathematics and Statistics, Williams College, USA}
\email{cadams@williams.edu}

\author[A. Simons]{Alexander Simons}
\address{Mathematical Sciences Building,  One Shields Ave., University of California, Davis, CA 95616}
\email{asimons@math.ucdavis.edu}

\begin{abstract}
The composition of any two nontrivial classical knots is a satellite knot, and thus, by work of Thurston,  is not hyperbolic. In this paper, we explore the composition of virtual knots, which are an extension of classical knots that generalize the idea of knots in $S^3$ to knots in $S \times I$ where $S$ is a closed orientable surface. We prove that for any two hyperbolic virtual knots, there is a composition that is hyperbolic. We then obtain strong lower bounds on the volume of the composition using information from the original knots. 
\end{abstract}

\maketitle











\noindent {\textsc{Acknowledgements}}
We want to thank the other members of the 2020 SMALL Knot Theory group: Michele Capovilla-Searle, Jacob McErlean, Darin Li, Lily Li, Natalie Stewart, and Miranda Wang. Many of the ideas in this paper were instigated by conversations we had during SMALL.


\setcounter{tocdepth}{5}

\section{Introduction}\label{section}


Given two knots $K_1$ and $K_2$, in the 3-sphere, we can take their \emph{composition} by choosing a projection of each, with a choice of the outer region, removing a trivial arc from an edge of the outer region, and gluing each endpoint from $K_1$ to an  endpoint of $K_2$ while preserving the projections. This operation generates a new knot denoted $K_1 \# K_2$. If either knot is invertible, the resulting composite knot is unique. If neither knot is invertible,  two distinct knots  can result, which, once we fix orientations on the knots, correspond to either composing so the orientations match to yield an orientation on the composition or composing so they do not match. 



This idea of composition of knots extends to virtual knots, which were introduced in  \cite{Kauffman virtual knot intro}. 
For a projection of a virtual knot, we allow a new third type of crossing called a \emph{virtual crossing} and denoted by a circle around the crossing.
Any knot diagram with a virtual crossing is called a \emph{virtual knot diagram}. The other crossings are called \emph{classical crossings}, and any knot diagram with only classical crossings is referred to as a \emph{classical knot diagram}, but we include it as a type of virtual knot diagram with zero virtual crossings. Just as Reidemeister moves provide equivalence between classical knot diagrams, there are additional virtual Reidemeister moves that provide equivalence between virtual knot diagrams \cite{Kauffman virtual knot intro}.




In fact, as in \cite{Stable equivalence} and \cite{KK}, the study of virtual knots is equivalent to the study of knots in thickened closed orientable surfaces modulo stabilizations and destabilizations.  The \emph{genus} of a virtual knot refers to the genus $g$ of the minimal genus thickened surface that contains a representative of $K$. When we say a virtual knot is contained in a thickened surface $S \times I$, this means the knot can be realized by a knot projection on $S$ with only classical crossings on $S$. In \cite{Kuperberg} it was proved that a virtual knot or link has a unique realization as a knot or link in a minimal genus thickened surface up to homeomorphism sending knot to knot.

We are particularly interested in hyperbolic knots. Given a knot $K$ in the 3-sphere, if its complement can be endowed with a hyperbolic metric, then we say that $K$ is \emph{hyperbolic}. The Mostow-Prasad Rigidity Theorem \cite{Mostow}, \cite{Prasad} says that if a 3-manifold has a finite volume hyperbolic metric, the metric, and therefore the corresponding volume, is unique.  If two knots $K_1$ and $K_2$ have hyperbolic complements in $S^3$ with different volumes, then the manifolds defined by their complements must be distinct, and therefore $K_1$ and $K_2$ are distinct as well.
 
For manifolds that have higher genus boundary, the volume is either infinite or not well-defined. We circumvent this problem by restricting to tg-hyperbolicity. 
A compact orientable 3-manifold $M$ is \emph{tg-hyperbolic} if, after capping off all sphere boundaries with balls, and removing all torus boundaries, the resulting manifold $M'$ has a finite volume hyperbolic metric such that all remaining boundary components are totally geodesic. We also say $M'$ is a tg-hyperbolic manifold.
Mostow-Prasad Rigidity holds for all tg-hyperbolic 3-manifolds, and thus each has a well-defined volume. 

A surface properly embedded in a 3-manifold is \emph{essential} if it is incompressible, boundary incompressible, and not boundary parallel. Thuston proved that a compact orientable 3-manifold is tg-hyperbolic if and only if it contains no essential spheres, annuli or tori.

This paper focuses on hyperbolic virtual knots. Since virtual knots generalize the idea of knots in the 3-sphere to knots in thickened surfaces, hyperbolicity of virtual knots generalizes in the same way. Given a virtual knot $K$, we consider the manifold obtained by taking the complement of $K$ in the minimal genus thickened surface containing it. If the resulting manifold is tg-hyperbolic, then we say that $K$ is tg-hyperbolic, and can calculate a hyperbolic volume for $K$. 

See \cite{tg hyperbolicity in S cross I} for a more complete  introduction to hyperbolicity for virtual knots. That paper includes a table of the volumes of the 116 nontrivial virtual knots of four or fewer classical crossings calculated via the computer program SnapPy \cite{CDG}, all of which, with the exception of the trefoil knot, turn out to be tg-hyperbolic. Note that if a virtual knot $K$ is realized in $S \times I$ as a tg-hyperbolic knot, then the genus of $K$ is the genus of $S$.  This is true because a tg-hyperbolic manifold contains no essential annuli, and therefore it cannot contain a reducing annulus, the removal of a regular neighborhood of which would allow (after capping holes) for a decrease in genus.

In \cite{Thurston}, Thurston proved that all classical knots fall into three disjoint categories: torus knots, satellite knots, and hyperbolic knots. All compositions of classical knots fall into the category of satellite knots and thus none are hyperbolic. So hyperbolic invariants are useless for studying composition of classical knots. However, virtual knots behave very differently. As we will see, any two non-classical tg-hyperbolic virtual knots have a composition that is a tg-hyperbolic virtual knot. 

 Just as we define composition of classical knots, we can define composition of virtual knots in terms of projections. We choose a region of each projection to be the exterior region, remove an arc from an edge of that region,  and then glue the two projections together at their endpoints, obtaining a projection of the composition. However, in the category of virtual knots, two virtual knots can have infinitely many distict compositions coming from complicating the original projections before composing. 
 
 Composition of virtual knots was first discussed in \cite{KM}. We include the details in Section \ref{sec: virt comp}. The method of composition described above translates into two different possibilities when considered for the corresponding knot in a thickened surface $S \times I$. For each knot that is to be composed, the operation depends on a choice of a disk $D$ in $S$ such that $D \times I$, called the cork $C$, 
 intersects the knot in a single unknotted arc. A virtual knot $K$ in a thickened surface $S \times I$ with cork $C$ is denoted $(S \times I, K, C).$ We will often refer to this as a \emph{triple}. 

In the case that there is a nontrivial simple closed curve on the surface that intersects the disk $D$ corresponding to the cork in a single connected arc and intersects the knot projection only once (possibly after isotopy of the knot), which will be inside the disk, we say that the cork is \emph{singular}. If no such curve exists, the cork is \emph{nonsingular}.

Composition of two knots with singular corks, composition of two knots with nonsingular corks, and composition of a knot with a singular cork with a knot with a nonsingular cork, must all be dealt with separately. 
In the most common case, with two nonsingular corks, the corks are removed from the two thickened surfaces and the remaining boundaries of the removed corks in the thickened surfaces are glued to one another. 

Throughout what follows, all results apply to both knots and links, although we often use $K$ to represent either a knot or link, and in discussion we sometimes refer to a knot when it could also be a link.

The main results of this paper are stated below. We prove that  two non-classical tg-hyperbolic virtual knots can always be composed to obtain a tg-hyperbolic virtual  knot, and then bound the volume of this new manifold obtained from taking the knot complement from below. Note that the composition of a virtual knot with a classical knot will never be hyperbolic as there will always be an essential annulus present.


\begin{reptheorem}{thm: singular tg-hyperbolicity}
Given two non-classical virtual knots or links  that are tg-hyperbolic, with triples\\
$(S_1 \times I, K_1, C_1), (S_2 \times I, K_2, C_2)$ such that both $C_1$ and $C_2$ are singular, the singular composition \newline $(S_1 \times I, K_1, C_1) \#_s (S_2 \times I, K_2, C_2)$ is tg-hyperbolic. Further, $g(K_1 \# K_2) = g(K_1) + g(K_2) -1$.
\end{reptheorem}

\medskip



Although Theorem \ref{thm: singular tg-hyperbolicity} applies for any pair of singular corks, a corresponding result for nonsingular corks does not hold. In fact, every tg-hyperbolic knot in a thickened surface has a choice of nonsingular cork that will prevent its composition with any other tg-hyperbolic knot from being tg-hyperbolic. For instance, we can tie a slip knot in our knot and then insert the cork, as in Figure \ref{slipknot}(a) or (b). This will cause there to be an essential torus in the resulting composite knot as in Figure \ref{slipknot}(c), precluding tg-hyperbolicity. So hypotheses in the following theorems about nonsingular corks have been included to avoid this possibility.

\begin{figure}[htbp]
    \centering
    \includegraphics[width=.9\textwidth]{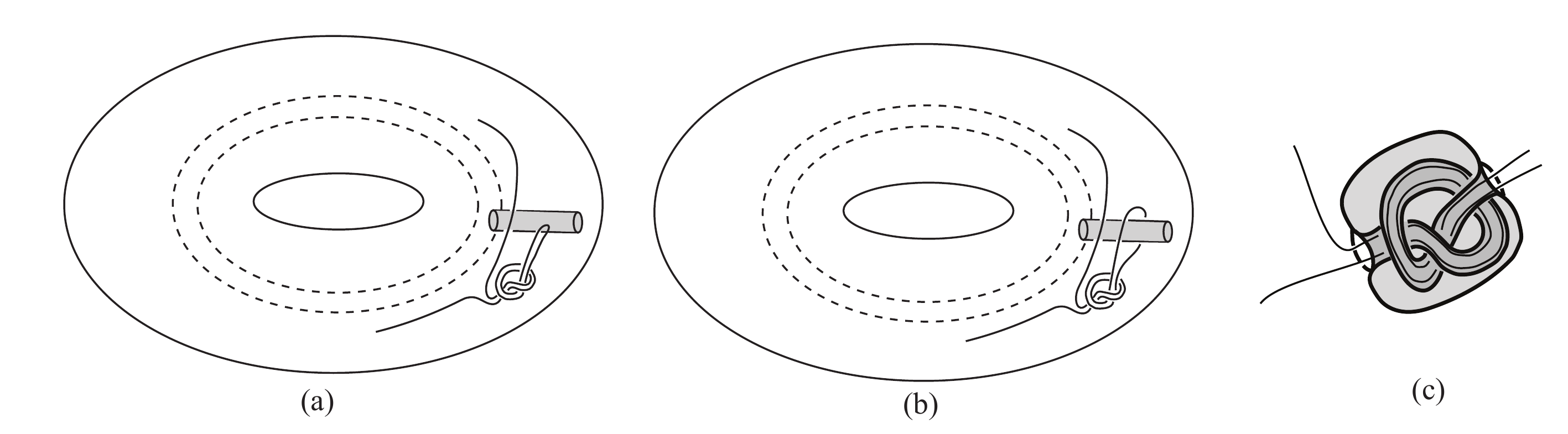}
    \caption{For the corks depicted, the composition with any other knot in a thickened surface cannot be tg-hyperbolic because of the presence of an essential torus.}
    \label{slipknot}
\end{figure}

\begin{definition} A tg-hyperbolic triple $(S \times I, K, C)$ with nonsingular cork $C$ is \emph{hyperbolically composable} if there exists another triple $(S' \times I, K', C')$ with nonsingular cork $C'$ such that their composition is tg-hyperbolic.
\end{definition}



\begin{reptheorem}{thm: hyperbolicallycomposable} The nonsingular composition of two hyperbolically composable  triples $(S_1 \times I, K_1, C_1)$ and $(S_2 \times I, K_2, C_2)$ is tg-hyperbolic, with genus $g(K_1 \# K_2) = g(K_1) + g(K_2)$.
\end{reptheorem}

Note that in \cite{KM}, Kauffman and Manturov proved that $g(K_1) + g(K_2) \geq g(K_1 \# K_2) \geq g(K_1) + g(K_2) - 1$. Since singular composition yields a thickened surface of genus $g(K_1) + g(K_2) - 1$, this must be the genus. But in the case of nonsingular composition, it is the fact that the resulting knot complement is tg-hyperbolic that ensures the resulting genus is $g(K_1) + g(K_2)$.

In this paper, we sometimes need to create manifolds by removing one properly embedded submanifold $C$ from another $M$. The formal way to do this is to take the \emph{path-metric completion of $M \setminus S$}, denoted $M \pmc S$. 

\begin{definition}
Given a submanifold $C$ properly embedded in a 3-manifold $M$, $M \pmc C$ is obtained by removing a regular neighborhood of $C$ from $M$ and then taking the path-metric completion. 
\end{definition}

\begin{definition} The \emph{nonsingular cork double} $D_{ns}(S \times I, K, C)$ of a triple $(S \times I, K, C)$ with nonsingular cork $C$ represents 
the nonsingular composition of a virtual knot $(S \times I, K , C)$ with its image $(S \times I, K , C)^R$ reflected across a plane perpendicular to the projection plane. If we have a singular cork, the \emph{singular cork double} $D_s(S \times I, K,C)$ represents the singular composition of $(S \times I, K , C)$ with its image reflected across a plane perpendicular to the projection plane.  
\end{definition}

  \begin{reptheorem}{thm: double tg-hyp implies comp tg-hyp}  Let $(S_1 \times I, K_1, C_1)$ and $(S_2 \times I, K_2, C_2)$ be two non-classical triples such that their doubles are tg-hyperbolic. Then, the composition, singular if both corks are singular and nonsingular otherwise,  is tg-hyperbolic. If both corks are singular, the genus is $g(S_1) + g(S_2) -1$ and otherwise the genus is $g(S_1) + g(S_2)$.
\end{reptheorem}


Thus, even if a virtual knot  is not tg-hyperbolic, but, for a given choice of cork in some realization of the knot in a thickened surface, its corresponding double is tg-hyperbolic, then for any two such cases, the composition is again tg-hyperbolic. %

The 4-crossing virtual knots 4.4, 4.5, 4.54. 4.55, 4.56, 4.74, 4.76 and 4.77 are all composite knots that are examples of this, where at least one of the two factor knots is in fact a trivial knot, but with cork chosen in a genus one realization of the trivial knot so that the double is tg-hyperbolic.

In general, it is not obvious if a given cork is singular or nonsingular, and if it is nonsingular, whether or not the corresponding double is tg-hyperbolic. So we include the following.

\begin{reptheorem}{thm: composition tg-hyperbolicity}
Given two non-classical virtual knots or links that are tg-hyperbolic,  there is a choice of corks so that the composition, singular if both corks are singular and nonsingular otherwise, is tg-hyperbolic. In the case of singular composition, the resulting genus is $g(K_1) + g(K_2) -1$ and otherwise it is $g(K_1) + g(K_2)$.
\end{reptheorem}

Doubling allows us to define a notion of volume that will be useful for computing volume bounds when composing different knots in thickened surfaces. 

\begin{definition}
The \emph{nonsingular composite volume}, denoted $vol_{ns}(S \times I, K, C)$, of a knot $K$ with nonsingular cork $C$ is obtained by taking half of the volume of the cork double of $K$:  $$vol_{ns}(S \times I, K, C) = \frac{1}{2}vol(D_{ns}(S \times I, K, C))$$
\end{definition}

Note that this volume is only defined when $C$ is a nonsingular cork and the cork double is tg-hyperbolic.
When $C$ is singular, we have two different ways to calculate volume bounds based on the type of composition. The first is using an analogue of nonsingular composite volume for a singular composition. (The second will occur when we are composing triples, one with a singular cork and one with a nonsingular cork.) 

\begin{definition}
The \emph{singular composite volume}, denoted $vol_s(S \times I, K, C)$, of a knot $K$ with a singular cork $C$ is obtained by taking half of the volume of the singular cork double of $K$:  $$vol_s(S \times I, K, C) = \frac{1}{2} vol(D_s(S \times I, K, C))$$
\end{definition}

This now allows us to use results from \cite{AST} (as generalized in \cite{CFW}) to say something interesting about the volume of the composition of tg-hyperbolic knots  living in thickened surfaces when both corks are singular or both corks are nonsingular. 

\begin{reptheorem}{thm: nonsing compose nonsing vol bound}
Given two non-classical hyperbolically composable triples $(S_1 \times I, K_1, C_1)$ and $(S_2 \times I, K_2, C_2)$, the volume of the composition $M = (S_1 \times I, K_1, C_1) \#_{ns} (S_2 \times I, K_2, C_2)$ is bounded below by the nonsingular composite volumes of $(S_1 \times I, K_1, C_1)$ and $(S_2 \times I, K_2, C_2)$:
$$vol (M) \geq vol_{ns}(S_1 \times I, K_1, C_1) + vol_{ns}(S_2 \times I, K_2, C_2)  $$

\end{reptheorem}


\begin{reptheorem}{thm: sing compose sing vol bound}
 Let $(S_1 \times I, K_1, C_1)$ and $(S_2 \times I, K_2, C_2)$ be two tg-hyperbolic triples such that both $C_1$ and $C_2$ are singular. If both have genus one or both have genus at least two, then the volume of the composition $M = (S_1 \times I, K_1, C_1) \#_s (S_2 \times I, K_2, C_2) $ is bounded below by the singular composite volumes of $(S_1 \times I, K_1, C_1)$ and $(S_2 \times I, K_2, C_2)$:
$$vol (M) \geq vol_s(S_1 \times I, K_1, C_1) + vol_s(S_2 \times I, K_2, C_2)  $$
\end{reptheorem}


The most interesting case for bounding volume occurs when one cork is singular and the other is nonsingular. A full discussion of how to bound the volume of the composition of this case is found in Section \ref{section: volume bounds}. \\

\begin{reptheorem}{thm: nonsing compose sing vol bound}
    Let $(S_1 \times I, K_1, C_1)$ and $(S_2 \times I, K_2, C_2)$ be two non-classical tg-hyperbolic triples such that $C_1$ is singular and $(S_2 \times I, K_2, C_2)$ is hyperbolically composable. Then the volume of the composition $M = (S_1 \times I, K_1, C_1) \#_{ns} (S_2 \times I, K_2, C_2)$ is bounded below: 
    $$vol(M) \geq \frac{1}{4}vol(D(D_{ns}(S_1 \times I, K_1, C_1)) \pmc T) + \frac{1}{2}vol(D_{ns}(S_2 \times I, K_2, C_2))$$
\end{reptheorem}

In Section 5, we provide evidence that the choice of projections and corks is crucial when composing virtual knots.

\begin{reptheorem}{thm: vol limits to infinity} 
Given two tg-hyperbolic non-classical virtual knots  $K_1, K_2$  such that both $K_1$ and $K_2$ are alternating, there exists a sequence of tg-hyperbolic compositions $W_i = (S_1 \times I, K_1, C_1^i) \#_{ns} (S_2 \times I, K_2, C_2)$ such that $$\lim_{i \to \infty}(vol(W_i)) = \infty.$$
\end{reptheorem}

We conjecture that the restriction to virtual knots that are alternating is unnecessary.

 Fixing a particular diagram of a knot $K$ with $c$ classical crossings on a surface $S$, the crossings cut the projection into $2c$ arcs. Each is a candidate for placement of a cork upon it. So there are $2c$ corks corresponding to a given diagram that are distinct. Since we can alter projections, there are an infinite number of corks corresponding to a given knot $K$ in a surface. 
      
\begin{definition} If a knot $K$ is in a reduced alternating diagram on $S$, we call any cork that corresponds to one of the $2c$ possibilities for that diagram an {\it alternating cork}. 
    
\end{definition}

We show that for genus at least 1, all alternating corks are nonsingular. Then we show:



\begin{reptheorem}{thm: alternatingcork} Let $K$ be a non-classical alternating virtual knot that has 
 a weakly prime projection on $S$. Then for any choice of alternating cork, $(S \times I, K, C)$ is hyperbolically composable.
    \end{reptheorem}

\bigskip  \noindent


Section \ref{sect: explicit examples}  includes explicit examples, as well as a table of volume contributions to allow the reader to use these ideas without having to perform time-intensive volume computations.
\newline

Before moving on to prove these theorems, we turn for a moment to boundaries of 3-manifolds. Since tg-hyperbolic manifolds are atoroidal, it follows that any tg-hyperbolic manifold obtained by taking a knot complement in a thickened torus does not include the inner and outer boundaries $S \times \{0, 1\}$. However, when discussing knot complements in thickened surfaces with genus $g \geq 2$, we assume that their boundaries $S \times \{0, 1\}$ are included. These boundaries are taken to be totally geodesic. This is because when the genus of $S$ is greater than one, Mostow-Prasad Rigidity only holds for thickened surfaces $S \times I$ with totally geodesic boundary.

At first, this seems to present an issue with composing genus one and genus two (or more) knots. However, we get around this issue by doubling. For any genus one knot in a nonsingular composition, we first remove its cork, and then double it over what remains of the cork boundary to obtain a genus two surface that includes its boundary. Then we take half of this double to compose, and therefore everything being composed is \emph{boundary compatible} in that both pieces include their boundary. Since we can also consider singular composition as doing nonsingular composition and then reducing along a reducing annulus and capping off (see Section \ref{sec: virt comp}), this approach also works to prove tg-hyperbolicity in the case of singular composition when both factor knots have genus two or greater. This approach, however, fails to work when generating volume bounds for singular composition of a genus one knot with a genus two (or more) knot. This is why  Theorem \ref{thm: sing compose sing vol bound} is stated to exclude the composition of a genus one knot with a genus at least two knot. The case of singular composition when both factor knots have genus one is simple because the result is a genus one knot, so the boundary is not included.

If instead of knots in thickened surfaces, we consider knots in handlebodies, results on compositions appear in \cite{AS}.

\section{Defining Composition of Virtual Knots}\label{sec: virt comp}
 In \cite{KM}, Kauffman and Manturov prove that there are two types of composition for virtual knots. Start with two virtual knots $K_1$ and $K_2$, embedded in minimal genus thickened surfaces $S_1 \times I$ and $S_2 \times I$. The first type of connected sum is constructed in the following way:

In each $S_i \times I$, choose a disk $D_i$ with $D_i \subset \partial (S_i \times I)$ such that $(D_i \times I) \cap K_i$ is an unknotted arc, and call each $D_i \times I$ a \emph{cork}. Denote the cork $C_i$. Then remove each $C_i$ and glue the resulting manifolds together by identifying the boundaries of the removed corks in $S_i \times I \setminus C_i$ so that the endpoints of the knots are identified. This yields $M = (S_1 \# S_2) \times I$ and contains $K_1 \# K_2$.  We refer to this construction as \emph{nonsingular composition}, and denote it $(S_1 \times I, K_1, C_1) \#_{ns} (S_2 \times I, K_2, C_2)$. We include Figure \ref{fig: nonsingular composition example} here, which shows an example of nonsingular composition. More examples are included in Section \ref{sect: explicit examples}.\\ 

\begin{figure}[htbp]
    \centering
    \includegraphics[width=0.4\textwidth]{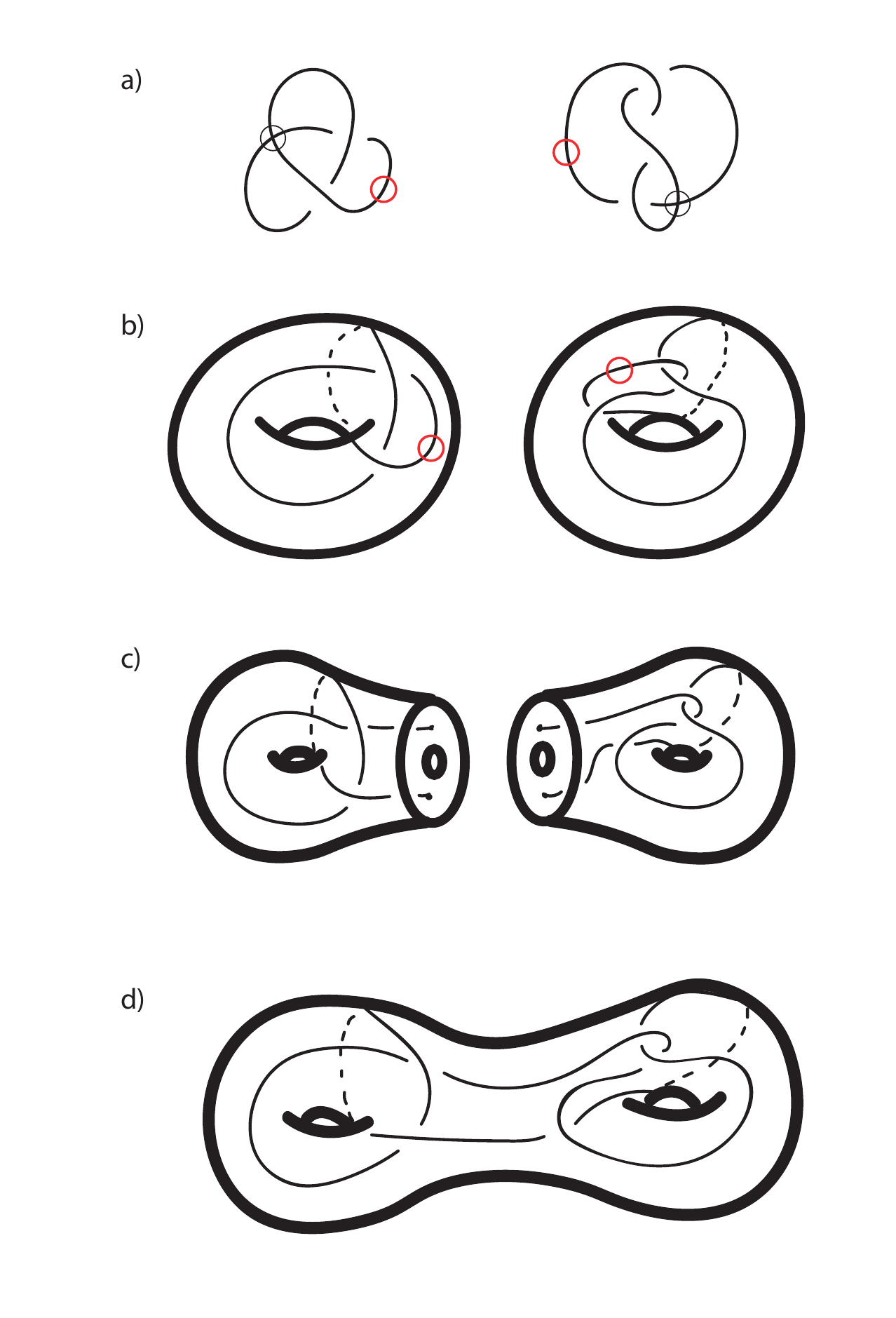}
    \caption{An example of nonsingular composition. a) and b) show the diagram and thickened surface representation of the two knots. c) shows each $S_i \times I$ after the cork has been removed, and d) shows the final composition. For simplicity, the inner surface boundary $S_i \times \{0\}$ is not drawn in.}
    \label{fig: nonsingular composition example}
\end{figure}

In the literature, virtual knots are often referred to by their \emph{surface-link pair} realizations, and denoted $(S, K)$ where $K$ is the virtual knot embedded in a minimal genus thickened surface $S \times I$. We adapt this idea, and use a triple notation throughout this paper to keep track of the thickened surface $S \times I$ containing the knot $K$, as well as the choice of cork $C$ (which defines the composition). We denote these triples $(S \times I, K, C)$. We use $S\times I$ and not $S$ because $S \times I$ is the manifold we are concerned with, and $S\times I$ contains $K$ and $C$ as submanifolds. We define $M$ to be the knot complement: $M = (S \times I) \setminus K$ when $S$ has genus at least 2 and $M = (S \times (0,1)) \setminus K$ when $S$ has genus one. We say that $(S \times I, K, C)$ is a \emph{tg-hyperbolic triple} when the manifold $M$ is tg-hyperbolic.

The second type of composition is possible only in certain cases. Before defining it, we must first define a \emph{singular curve}.

\begin{definition} \label{def: gen sing curve}
Let $K$ be a knot in a thickened surface $S \times I$ with genus $g > 0$, and let $I = [0,1]$. A \emph{singular curve} $\gamma$ is a nontrivial curve on $S$ such that $\gamma \times \{0, 1\}$ bounds an annulus $A$ in $S \times I$ punctured exactly once by $K$. 
\end{definition}

If there exists a projection $P$ of a virtual knot $K$ onto a projection surface $S$ that intersects a nontrivial curve exactly once, that is a singular curve. But in general, given a projection, it may be difficult to determine if a singular curve exists. Figure \ref{fig: virt tref singular curve} gives an example of a singular curve for a  projection of the virtual trefoil.

\begin{figure}[htbp]
    \centering
    \includegraphics[width=0.35\textwidth]{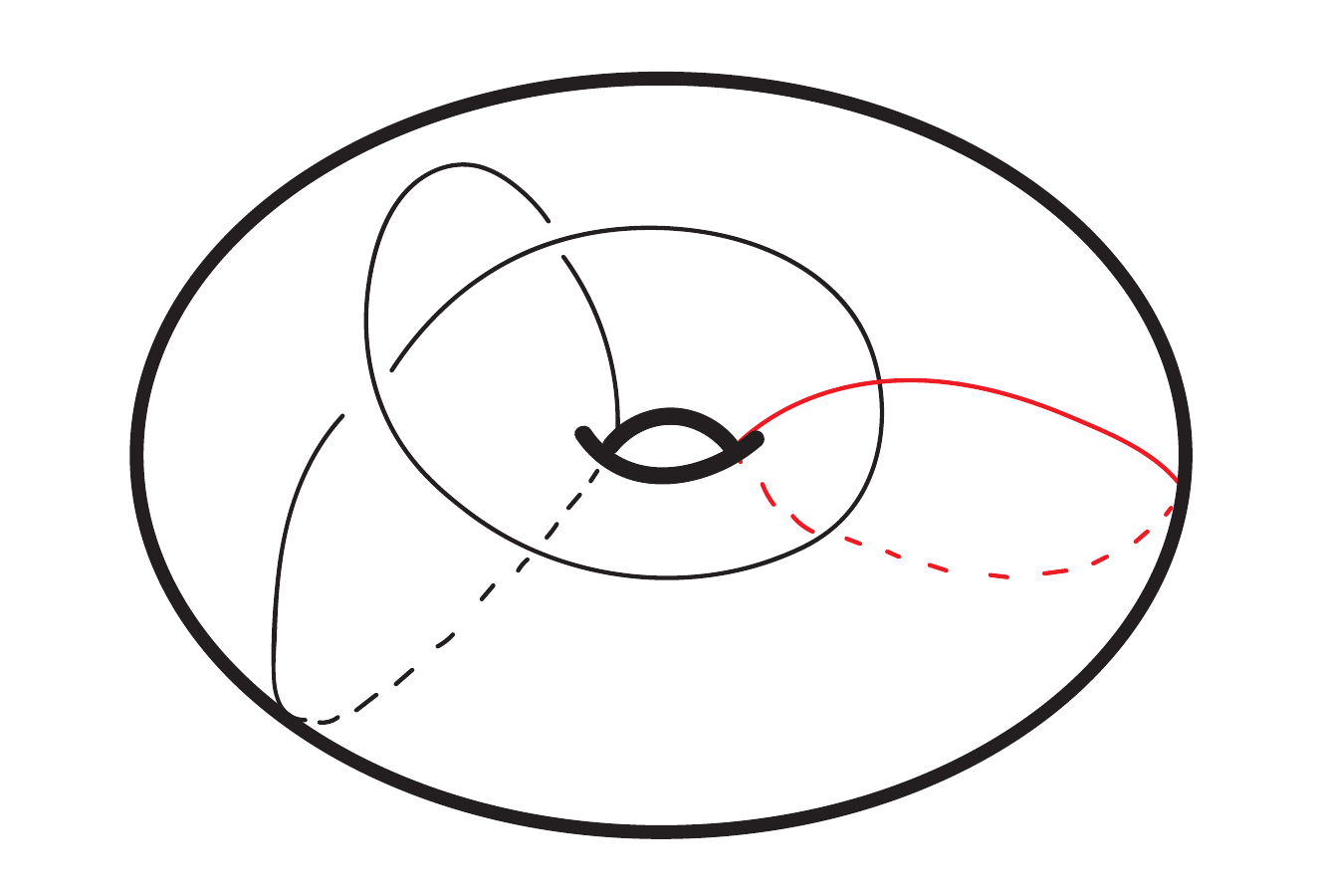}
    \caption{The virtual trefoil drawn on a torus. A singular curve is drawn in red.}
    \label{fig: virt tref singular curve}
\end{figure}

We next define \emph{singular composition}. 
If at least one singular curve exists for a minimal genus surface associated with a virtual knot $K$, then we refer to that virtual knot as \emph{singular}. 

Let $K_1, K_2$ be knots living in thickened surfaces $S_i \times I$ for $i = \{1, 2\}$ such that both $K_1, K_2$ are singular. Then each $S_i \times I$ has a singular curve $\gamma_i$ defining a once-punctured annulus $A_i$. We then compose $K_1$ and $K_2$ by cutting each $S_i \times I$ along $A_i$ defined by $\gamma_i$, and identifying the boundaries $\partial A_i$ to obtain $S \times I = (S_1 \# S_2) \times I$ containing $K_1 \# K_2$. Furthermore, $g(S) = g(S_1) + g(S_2) - 1$. We refer to this composition of two singular knots by cutting and pasting along singular curves as a \emph{singular composition}, and denote it with the symbol $\#_s$. Figure \ref{fig: ex cutting open singular knot} shows an example of cutting open $M_1$ along $A_1$. 

We note that there is some ambiguity in this definition. After cutting open each $S_i \times I$, there are two gluing boundaries for each $S_i \times I$. This implies there is a choice about which boundary glues to which between the two manifolds. In general, both of the possible choices are valid singular compositions.


\begin{figure}[htbp]
    \centering
    \includegraphics[width=0.3\textwidth]{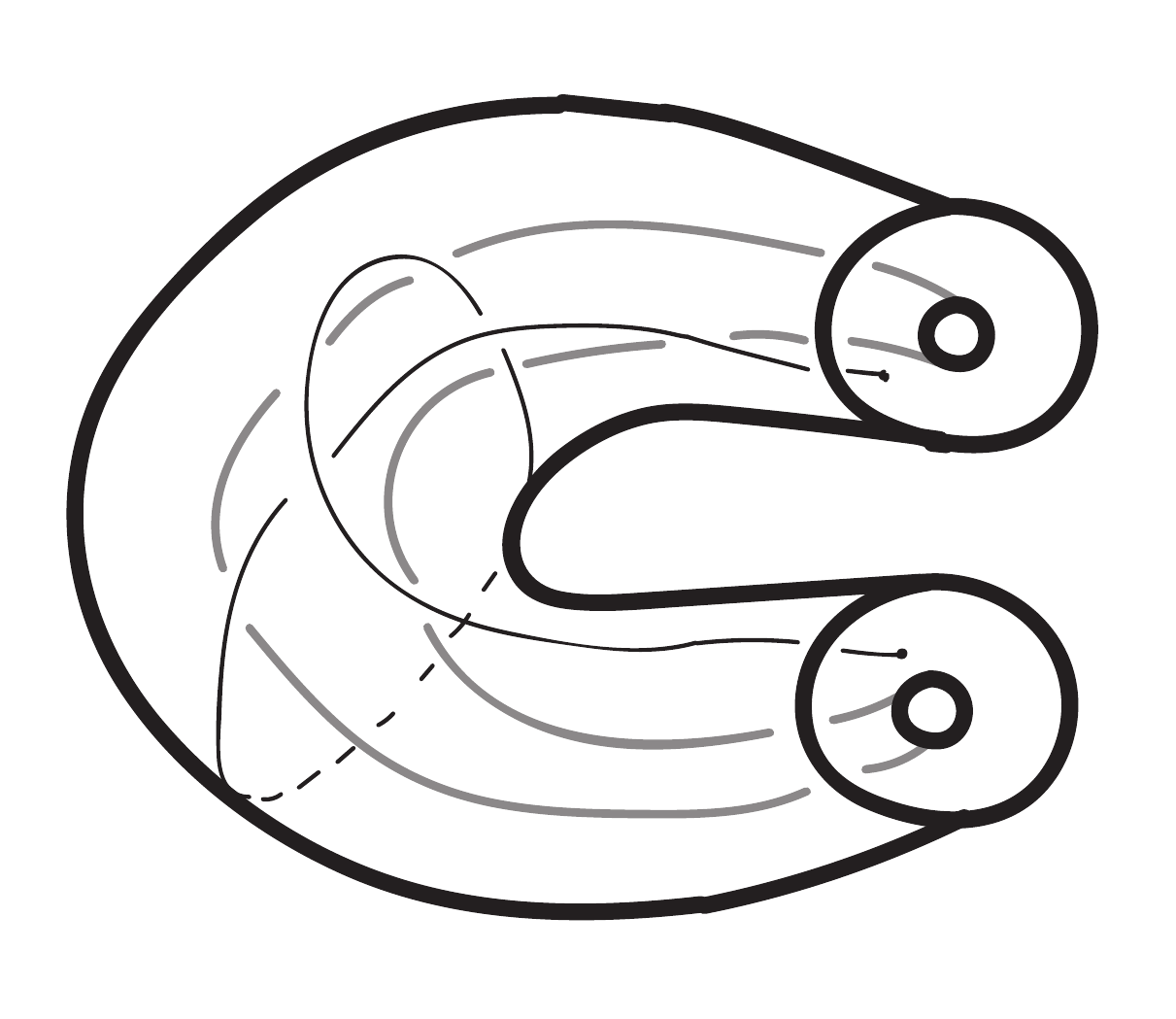}
    \caption{The result of cutting open a genus one thickened surface containing a singular knot. We cut along the once-punctured annulus $A$ defined by the singular curve $\gamma$ shown in Figure \ref{fig: virt tref singular curve}.}
    \label{fig: ex cutting open singular knot}
\end{figure}

Also, we can alternatively define singular composition via the previously defined nonsingular composition with corks as appears below. The definition of a singular cork will be crucial for thinking about singular composition using corks. 

\begin{definition} \label{def: singular cork}
Let $K$ be a knot living in a thickened surface $S \times I$. Recall that a cork $C$ is a thickened disk $C = D \times I$ such that $C \cap K$ is just an unknotted arc. We say that a cork is \emph{singular} if there exists a disk $E$ in $M \pmc C$ such that $\partial E$ consists of four arcs,  two arcs of which are on $\partial C$ that together separate the two endpoints of $K$ on $\partial C$, one arc of which is on the inner boundary of $M$, and one arc of which is on the outer boundary of $M$. Further, $E$ cannot be isotoped 
into $\partial C$. If no such disk exists, then we call $C$ a \emph{nonsingular cork}. 
\end{definition} 

We call such corks singular because in the presence of such a cork $C$, there is an annulus punctured once by $K$ in $C \cup E$, a boundary of which corresponds to a singular curve. Note that the issue with slipknots as depicted in Figure \ref{slipknot} does not occur for singular corks.

 We are now ready to define singular composition using corks. As in the nonsingular case, we can compose two triples $(S_1 \times I, K_1, C_1) \#_s (S_2 \times I, K_2, C_2)$ both with singular corks $C_1$ and $C_2$ by removing the corks and pasting along boundaries of the now-removed corks. Since both corks were singular, each $M_i \pmc C_i$ contains a disk $E_i$ with two arcs on $C_i$. When composing the two manifolds, we can glue these two disks together along their arcs on the boundaries of $C_i$. Then $E_1 \cup E_2$ will be an annulus $A$ in the resulting manifold $M$. (An annulus of this type is called a reducing annulus in the literature.) We can cut along $A$, and cap off each resulting copy of $A$ with a thickened disk  to yield a manifold $M'$. Importantly, $M'$ is homeomorphic to the manifold obtained by performing singular composition by cutting and pasting along once-punctured annuli defined by singular curves.

 \medskip

There are three important notes to make about the two types of composition detailed above. First, these do in fact correspond to all possible compositions of diagrams of virtual knots \cite{KM}. For examples of how composition with manifolds relates to the diagrammatic composition of virtual knots, the reader should refer to Section \ref{sect: explicit examples}. 
Second, the definition of singular cork relies heavily on the definition of a singular curve. This means that corks that visually appear to be nonsingular may actually be singular. For an example of this, see Figure \ref{fig: reduced necessary example}.

\begin{figure}[htbp]
    \centering
    \includegraphics[width=0.5\textwidth]{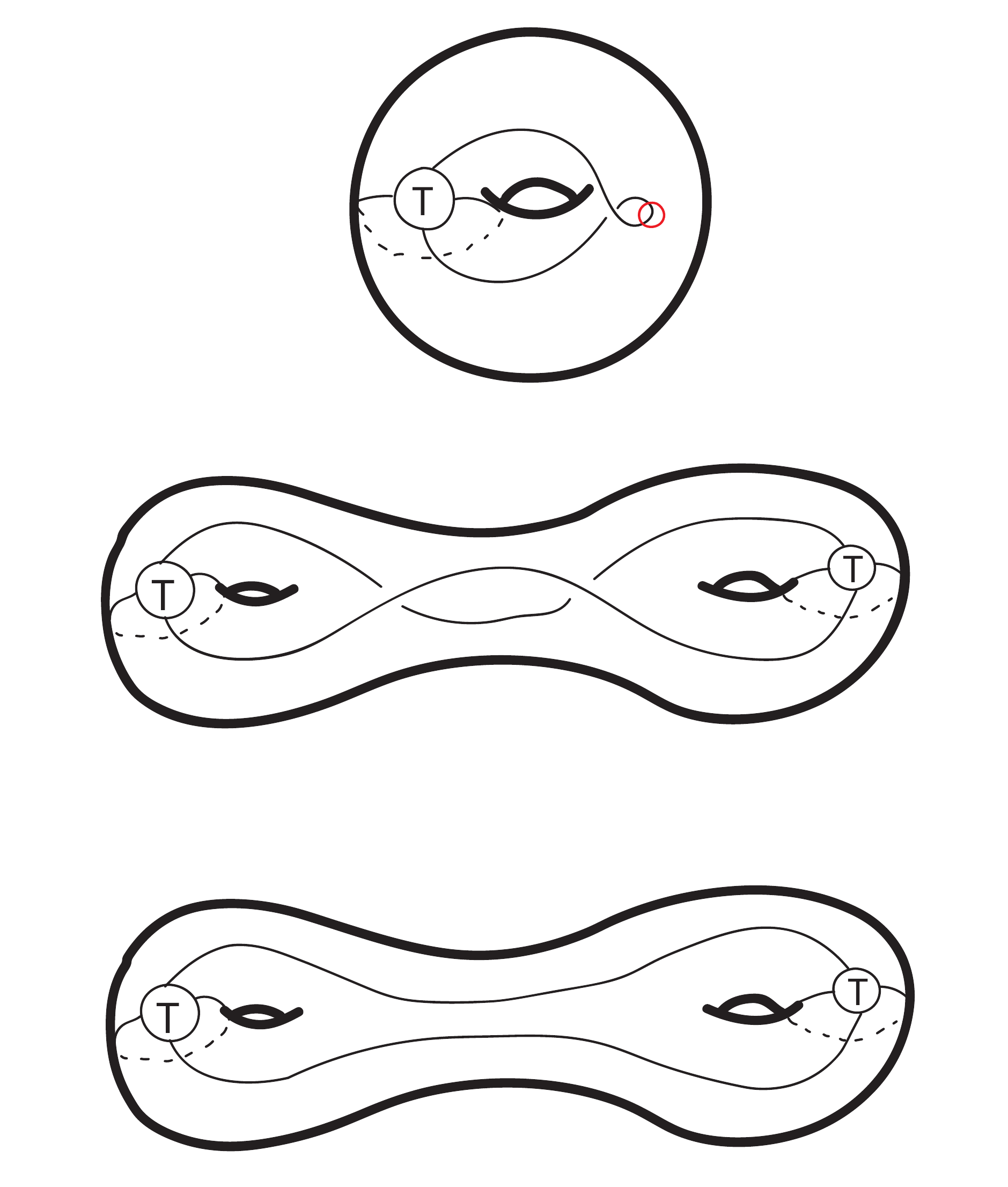}
    \caption{An example of a choice of cork (shown in red) that visually appears to be nonsingular, but is actually a singular cork. The nonsingular composition of the knot with itself has an obvious reducing annulus.}
    \label{fig: reduced necessary example}
\end{figure}

Third, it is important to clarify that a singular knot can be involved in a nonsingular composition. For example, if $K_1$ is singular and $K_2$ is nonsingular, then all possible compositions $K_1 \# K_2$ will be nonsingular compositions. Furthermore, even in a singular knot it is possible to choose a nonsingular cork. It is the singularity of the corks that determines the composition; the fact that a knot is singular simply means there are possible choices of singular corks.

\section{Proving TG-Hyperbolicity} \label{sect: tg-hyperbolicity}

In this section, we prove Theorems \ref{thm: singular tg-hyperbolicity}, \ref{thm: hyperbolicallycomposable} and \ref{thm: composition tg-hyperbolicity}. While these theorems use similar proof techniques, we prove them in separate subsections for readability. Before beginning the proofs, we include a little bit of background and terminology that we will use throughout. 

Throughout this section, we let $M_i = S_i \times I \setminus K_i$, $K = K_1 \# K_2$ and let $M$ be the complement of the composite knot in the resulting thickened surface of genus $g(S_1) + g(S_2) -1$ when the composition is singular, and $g(S_1) + g(S_2)$ when the composition is nonisngular.

First, we show that any essential disk $D$ in either a singular or nonsingular composition must imply the existence of an essential sphere $F$. This is important because then eliminating essential spheres will also eliminate essential disks. 

\begin{lemma} \label{lemma: disk generates sphere}
Given two triples $(S_1 \times I, K_1, C_1), (S_2 \times I, K_2, C_2)$,  any essential disk $D$ in either a singular or nonsingular composition $M$ must generate an essential sphere $F$ in $M$.
\end{lemma}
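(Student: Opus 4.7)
My plan is to show that $\partial D$ must bound a disk $D' \subset \partial M$, and then take $F := D \cup D'$ as the desired essential 2-sphere in $M$. The components of $\partial M$ are the torus $T = \partial N(K)$ together with, in the genus at least two case, the surface boundaries $S \times \{0\}$ and $S \times \{1\}$. A preliminary observation is that each $S \times \{i\}$ is incompressible in $S \times I$ (a standard fact about product surfaces), and since $M \hookrightarrow S \times I$, any compression disk for $S \times \{i\}$ in $M$ would also compress it inside $S \times I$, which is absurd. Hence if $\partial D$ lies on a surface boundary it must be inessential there, so I may reduce to the case $\partial D \subset T$.

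The crucial step is ruling out the case that $\partial D$ is essential on $T$. The main tool is that $S \times I$ is irreducible (its universal cover is $\mathbb{R}^3$), so every embedded 2-sphere in $S \times I$ is separating and therefore meets the closed 1-submanifold $K$ in an even number of transverse points. If $\partial D$ were a meridian of $T$, then capping $D$ off with a meridional disk of $N(K)$ would produce an embedded sphere $F^\star \subset S \times I$ meeting $K$ transversely in exactly one point, contradicting the parity constraint. For a non-meridional essential slope on $T$, I would isotope $D$ to minimize $|D \cap A|$ with the cork separating annulus $A \subset M$, and then use an innermost-disk / outermost-arc analysis together with boundary incompressibility of the factor torus boundaries $\partial N(K_i)$ in the factor exteriors $M_i$ to derive a contradiction.

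With $\partial D$ now known to bound a disk $D' \subset \partial M$, I set $F := D \cup D'$, an embedded 2-sphere in $M$. To verify essentiality of $F$, suppose for contradiction that $F$ bounded a 3-ball $B \subset M$; then $D$ could be isotoped through $B$ rel $\partial D$ onto $D' \subset \partial M$, making $D$ boundary parallel and contradicting the essentiality of $D$. So $F$ does not bound a ball in $M$, and is therefore the desired essential sphere.

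The main obstacle I expect is the non-meridional essential slope sub-case. The parity argument handles meridional boundaries cleanly and uniformly, but other essential slopes demand a careful interaction between the cork structure of the composition and incompressibility properties of the factor knot exteriors, with the minimized intersection pattern $D \cap A$ serving as the primary bookkeeping tool.
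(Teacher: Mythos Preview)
Your treatment of the surface boundaries $S\times\{0,1\}$ is correct and matches the paper. Your meridian argument is also fine. The gap is in the non-meridional essential case on $T=\partial N(K')$: your overarching goal of showing that $\partial D$ bounds a disk $D'\subset\partial M$ is simply false there. The lemma carries no hypothesis on the factor triples, so nothing prevents a link component $K'$ (one not passing through the cork) from bounding a disk $D$ in $M$; then $\partial D$ is a longitude, essential on $T$, and no $D'$ exists. Your proposed innermost/outermost analysis invokes ``boundary incompressibility of the factor torus boundaries $\partial N(K_i)$ in $M_i$,'' which is not assumed, and even granting it the argument would have to contend with components disjoint from the gluing surface, for which minimizing $|D\cap A|$ gives $D\cap A=\emptyset$ and no leverage at all toward proving $\partial D$ inessential on $T$.

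The paper does not attempt to force $\partial D$ to be inessential on $T$. Instead, when $\partial D\subset\partial N(K')$, it embeds $S\times I$ in $S^3$; then $D$ is a compressing disk for the knot torus in $S^3\setminus K'$, so $\partial D$ is a longitude, isotopic to $K'$ itself. The boundary of a regular neighborhood of $D$ in $S\times I$ is then a sphere in $M$ enclosing $K'$, hence essential. So the essential sphere is produced by a different construction in this case, not as $D\cup D'$. Your final paragraph (capping off and checking essentiality via the ball/boundary-parallel contradiction) is the right idea for the cases where $\partial D$ really does bound on $\partial M$, but you need the paper's regular-neighborhood sphere to cover the remaining longitudinal case.
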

\begin{proof}

We first consider the case in which $D$ has boundary on a component $K'$ of $K_1 \# K_2$. Because the thickened surface can be placed in $S^3$, we know $\partial D' = K'$. Then we can take the boundary of a regular neighborhood of $D$ to yield an essential sphere $F$ containing $K'$ in $M$.

Next, consider $D$ with boundary on one of the boundary surfaces of $M$. Since $\partial M$ is incompressible, we  know $\partial D$ must bound a disk $E$ on that surface. Then for $D$ to be essential,  $D \cup E$ must be an essential sphere in $M$. 
\end{proof}



 \subsection{Singular Composition is TG-Hyperbolic} \label{subsect: sing comp tg-hyperbolicity} 

In this subsection, we prove Theorem \ref{thm: singular tg-hyperbolicity}.  Our goal is to eliminate all possible essential disks, spheres, annuli, and tori through casework on the possible intersections between these surfaces and the doubling surface. 

In considering these arguments, it is easiest to think about singular composition as defined by cutting and pasting along thickened singular curves (instead of removing corks, gluing, cutting along an annulus and then adding thickened disks). Recall that given a knot $K$ living in a thickened surface $M = S \times I$, a singular curve $\gamma$ defines a once-punctured annulus $A$, with the single puncture coming from $K$. A singular cork $C$ also defines a once-punctured annulus $A$. Briefly, we perform singular composition $(S_1 \times I, K_1, C_1) \#_s (S_2 \times I, K_2, C_2)$ by cutting each $M_i$ along the once-punctured annulus $A_i$ defined by the singular curve, and then gluing them together along these new boundaries. We call the union of these two new once-punctured annuli $X$, and denote each $X_1$ and $X_2$. See Figure \ref{fig: sing comp ex labeled X} for an example.   

\begin{figure}[htbp]
    \centering
    \includegraphics[width=0.8\textwidth]{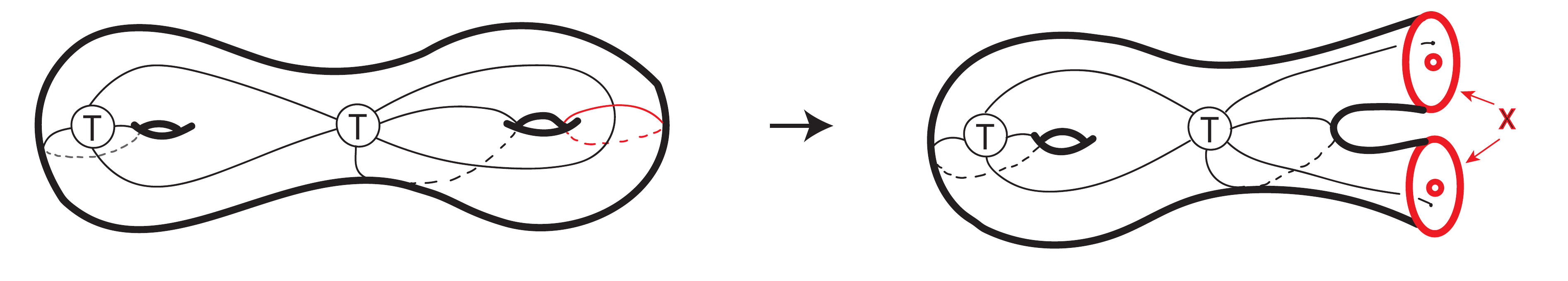}
    \caption{An example of cutting a genus 2 knot along a singular curve, and the resulting surfaces $X_1$ and $X_2$ used in composition.}
    \label{fig: sing comp ex labeled X}
\end{figure}

First, we prove that $X$ is both incompressible and boundary incompressible. 
\begin{lemma}
Given two tg-hyperbolic triples $(S_1 \times I, K_1, C_1), (S_2 \times I, K_2, C_2)$ such that both $C_1, C_2$ are singular corks,  the gluing surfaces $X_1$ and $X_2$ are incompressible and boundary incompressible in $M = (S_1 \times I, K_1, C_1) \#_s (S_2 \times I, K_2, C_2)$.
\end{lemma}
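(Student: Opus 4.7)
The plan is to reduce incompressibility and boundary incompressibility of the pairs of pants $X_1, X_2 \subset M$ to the essentiality of $A_i$ in the factor $M_i$. Writing $M_i^*$ for $M_i$ cut open along $A_i$, the two copies $A_i^+, A_i^-$ that appear in $\partial M_i^*$ get identified with their counterparts from $M_{3-i}^*$ to produce the gluing surfaces $X_1, X_2$ in $M$. In particular, $M \setminus (X_1 \cup X_2) = \mathrm{int}(M_1^*) \sqcup \mathrm{int}(M_2^*)$, so any properly embedded surface in $M$ disjoint from $X_1 \cup X_2$ lies entirely in one factor.

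The first preliminary task is to verify that each $A_i$ is essential in $M_i$. The three boundary curves of $A_i$ are $\gamma_i \times \{0\}$, $\gamma_i \times \{1\}$, and a meridian of $K_i$. Each is essential on $\partial M_i$: the first two because $\gamma_i$ is nontrivial on $S_i$ by the definition of singular curve, and the meridian because $K_i$ is nontrivial. Incompressibility of $\partial M_i$, which follows from tg-hyperbolicity of $M_i$, then rules out compressions of $A_i$, and boundary parallelism is excluded because $\gamma_i$ does not bound a disk on $S_i$ and hence $A_i$ cannot cobound a product region with a pair of pants in $\partial M_i$. With $A_i$ incompressible in the irreducible manifold $M_i$, the cut-open manifold $M_i^*$ is itself irreducible and has incompressible boundary.

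Now suppose for contradiction that $X_1$ is compressible in $M$, and choose a compressing disk $D$ that minimizes $|D \cap X_2|$. Since $\partial D \subset X_1$ is disjoint from $X_2$, the intersection $D \cap X_2$ is a finite collection of circles. If $D \cap X_2 = \emptyset$, then after arranging $\mathrm{int}(D) \cap X_1 = \emptyset$, the interior of $D$ lies in a single $\mathrm{int}(M_j^*)$; regluing $M_j^*$ to $M_j$ turns $D$ into a compressing disk for $A_j$ in $M_j$, contradicting essentiality of $A_j$. Otherwise, let $c$ be an innermost circle of $D \cap X_2$ on $D$, bounding a subdisk $D' \subset D$ with $\mathrm{int}(D') \cap X_2 = \emptyset$. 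If $c$ is essential on $X_2$, then $D'$ has interior lying in one $\mathrm{int}(M_j^*)$, and regluing produces a compressing disk for $A_j$. If $c$ bounds a disk $E \subset X_2$, then $D' \cup E$ is a $2$-sphere inside one $M_j^*$, which by irreducibility bounds a ball; isotoping $D$ across this ball strictly reduces $|D \cap X_2|$, contradicting minimality.

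Boundary incompressibility follows by the same framework: a boundary-compressing disk for $X_1$ meets $X_2$ in arcs and circles, and both families can be eliminated by innermost-circle and outermost-arc reductions combined with irreducibility of $M_j^*$ and boundary incompressibility of $A_j$, leaving a boundary-compressing disk for $A_j$ in $M_j$, which is again excluded by essentiality. The main obstacle I anticipate is the preliminary step, namely extracting full essentiality of $A_i$ in $M_i$ from the definition of singular curve/singular cork together with tg-hyperbolicity of the factor; in particular, ruling out boundary parallelism of the pair of pants requires a careful analysis of how $\gamma_i$ sits on $S_i$. Once that is in hand, the rest is a standard minimal-intersection and innermost-disk reduction.
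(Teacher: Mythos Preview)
Your approach is genuinely different from the paper's. The paper works directly with $X_1$ inside $M$: it classifies the possible positions of a compressing circle on the pair of pants $X_1$ (parallel to the inner boundary, to the outer boundary, or to the puncture) and kills the puncture case with a once-punctured-sphere argument, then reduces boundary incompressibility to incompressibility. You instead try to reduce everything to essentiality of $A_i$ in the factor $M_i$ and run a standard innermost-disk/outermost-arc argument across $X_2$.

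There is a real gap. Your final paragraph invokes ``boundary incompressibility of $A_j$'' in $M_j$, but your preliminary paragraph never establishes it; you argue only incompressibility and non-boundary-parallelism. You also misidentify the obstacle. Boundary parallelism of $A_i$ is in fact trivial: the three boundary circles of $A_i$ lie on three \emph{different} components of $\partial M_i$ (namely $S_i\times\{0\}$, $S_i\times\{1\}$, and a torus around $K_i$), so $A_i$ cannot be isotoped into any one of them, and for the same reason no essential arc on $A_i$ joining two distinct boundary components can be $\partial$-compressed. The genuine work is ruling out $\partial$-compressions along an arc $\alpha$ with both endpoints on a single boundary component of $A_i$. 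For $\alpha$ with both endpoints on $\gamma_i\times\{0\}$ (or $\gamma_i\times\{1\}$), gluing the putative $\partial$-compressing disk to the subannulus of $A_i$ cut off by $\alpha$ and capping with a meridian disk produces a sphere meeting $K_i$ once, which is impossible; for $\alpha$ with both endpoints on the meridian, the same construction produces either a compressing disk for $S_i\times\{0\}$ or an essential annulus from $S_i\times\{0\}$ to $\partial N(K_i)$, both contradicting tg-hyperbolicity of $M_i$. This case analysis is exactly the content the paper carries out directly for $X_1$, so once you fill it in, your route works---but it is not shorter than the paper's, since the heart of the argument is the same once-punctured-sphere trick, just transplanted to $M_i$.
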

\begin{proof}
For contradiction, suppose that $X_1$ is compressible. Then it has a compressing disk $D$ bounded by a circle $\alpha$ on $X_1$. Since $\alpha$ cannot bound a disk on $X_1$, it must bound either the hole from the inner surface, the puncture from $K$, or both. Obviously, $\alpha$ cannot encircle the hole from the inner surface, since $\alpha$ bounds the disk $D$ in $M$, and so must be trivial with respect to $\partial M$. So $\alpha$ must encircle just the puncture from $K$. But then we can take the once punctured disk $A$ defined by $\alpha$ on $X_1$, and glue this to $D$ along $\alpha$. This yields a sphere punctured exactly once by $K$, which is not possilb e in a thickened surface since it lives in $S^3$. Thus the compressing disk $D$ could not have existed, and so $X_1$ must be incompressible. The same argument implies $X_2$ is incompressible.

It is straightforward to see that each $X_i$ is boundary incompressible. For contradiction, suppose that some arc $\alpha$ that cannot be isotoped to the boundary $\partial M$ also bounds a disk $D$ with a single arc $\beta$ on $\partial M$. Obviously, there are only two choices for $\alpha$, up to isotopy. These two possibilities are shown in Figure \ref{fig: bdry incompressible}. Both cases can be eliminated with the same arguments, just with respect to the inner versus the outer boundary of $M$. Since $\alpha \cup \beta$ bounds a disk, we know that $\alpha \cup \beta$ is trivial with respect to $\partial M$. Therefore we can isotope $\beta$ to be entirely on $\partial X_i$, and then isotoping $\beta$ slightly off of the boundary and onto $X_i$ we get a compressing disk $E$ for $X_i$, which is impossible by the arguments above. Therefore each $X_i$ is boundary incompressible. 
\end{proof}

Note that the incompressibility of $X_1$ and $X_2$ in $M$ also implies incompressibility of the corresponding once-punctured annuli in each of $(S_1 \times I, K_1, C_1)$ and  $(S_2 \times I, K_2, C_2)$. 

\begin{figure}[htbp]
    \centering
    \includegraphics[width=0.4\textwidth]{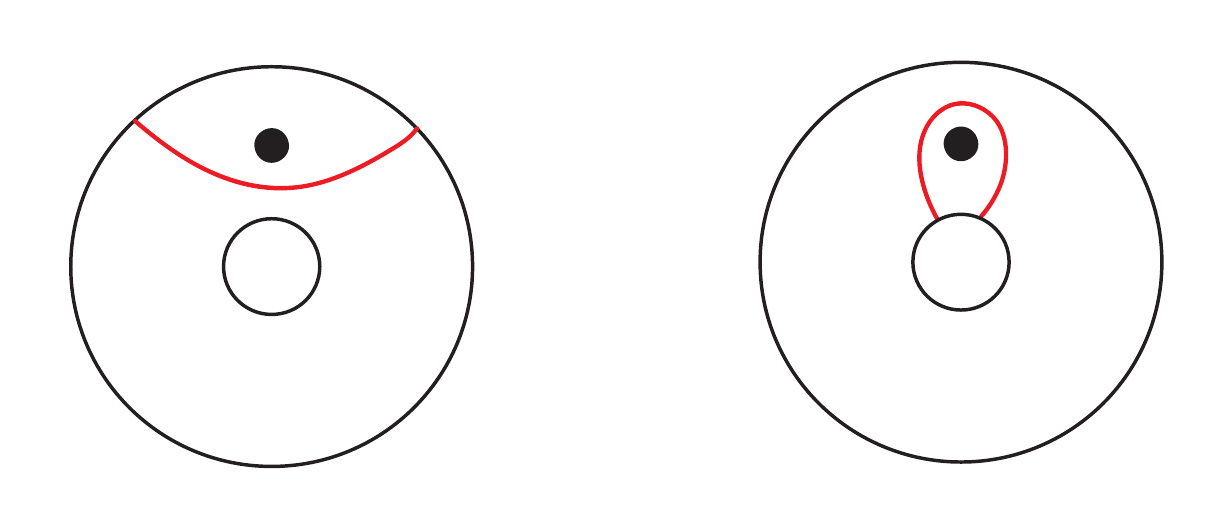}
    \caption{The two possible choices for $\alpha$ are shown in red.}
    \label{fig: bdry incompressible}
\end{figure}

We want to eliminate all essential spheres, annuli and tori, as a theorem of Thurston \cite{Thurston hyperbolization} proves this is sufficient for the existence of a hyperbolic metric. In order to do so, we assume these surfaces exist, consider the intersection curves between these surfaces and $X$, and then find a contradiction. The following lemma allows us to do just that: 

\begin{lemma} \label{must intersect (sing)}
Let $(S_1 \times I, K_1, C_1), (S_2 \times I, K_2, C_2)$ be two tg-hyperbolic triples with singular corks. Any essential sphere, annulus, or torus $F$ that exists in the composition $M = (S_1 \times I, K_1, C_1) \#_s (S_2 \times I, K_2, C_2)$ must intersect the gluing surface $X$.
\end{lemma}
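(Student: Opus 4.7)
The plan is a proof by contradiction. Assuming $F$ is an essential sphere, annulus, or torus in $M$ disjoint from $X$, I will show $F$ descends to an essential surface of the same type in one of the tg-hyperbolic manifolds $M_i = (S_i \times I) \setminus K_i$, contradicting the hypothesis. The first step is immediate: since singular composition is defined by cutting each $M_i$ along the once-punctured annulus $A_i$ determined by the singular cork $C_i$ and regluing the resulting boundaries to form $X$ in $M$, we have $M \pmc X = (M_1 \pmc A_1) \sqcup (M_2 \pmc A_2)$. So if $F \cap X = \emptyset$, then $F$ sits in one of the two pieces; by symmetry assume $F \subset M_1 \pmc A_1 \subset M_1$.

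The second step is to show $F$ is essential in $M_1$. For $F$ a sphere bounding a ball $B$ in $M_1$: because $B \subset \mathrm{int}(M_1)$ and $A_1$ is connected and disjoint from $\partial B = F$, either $A_1 \subset B$ or $A_1 \cap B = \emptyset$. The first is impossible since $\partial A_1 \subset \partial M_1$ cannot lie in $B$, so $A_1 \cap B = \emptyset$, whence $B$ lives in $M$ and $F$ bounds a ball in $M$, contradicting essentiality there. For $F$ an annulus or torus, I would establish incompressibility and boundary incompressibility in $M_1$ by standard innermost-circle and outermost-arc surgery on any hypothetical compressing or boundary compressing disk $D$, reducing $|D \cap A_1|$ using incompressibility and boundary incompressibility of $A_1$ in $M_1$. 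These properties are inherited from the corresponding properties of $X$ in $M$ proved in the previous lemma, as flagged in the remark following it; the surgery invokes irreducibility of $M_1$, a consequence of its tg-hyperbolicity, to cap off the intermediate spheres. Once $D$ is made disjoint from $A_1$ it lies in $M$, contradicting essentiality of $F$ there. A hypothetical boundary parallelism of $F$ in $M_1$ is handled analogously, by pushing the parallelism region off $A_1$.

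The main obstacle will be the annulus subcase, because $\partial F \subset \partial M_1$ and $\partial M$ differs from $\partial M_1$ near the identified curves $\partial A_1 = \partial A_2$. I will need to verify that once a boundary compressing disk or parallelism region for $F$ is made disjoint from $A_1$ via surgery, its arcs on $\partial M_1$ genuinely sit in $\partial M$, and that the innermost-arc reduction properly handles the two possible arc types in $D \cap A_1$ (endpoints on $\partial D \cap F$ versus endpoints on $\partial D \cap \partial M_1$). This is routine bookkeeping, but it is the step requiring the most care before the contradiction with tg-hyperbolicity of $M_1$ can be invoked.
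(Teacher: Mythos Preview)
Your proposal is correct and follows essentially the same approach as the paper: assume $F$ misses $X$, place it in $M_1$, use incompressibility of the once-punctured annulus $A_1$ (equivalently $X$) to isotope any compressing disk off $A_1$ and back into $M$, and argue that boundary parallelism in $M_1$ persists in $M$. The paper is terser on the boundary-parallel case, simply asserting that $F$ must be parallel to a component of $K_1$ not meeting $X$ and hence remains boundary parallel in $M$; your flagged concern about annuli with boundary on $S_1\times\{0,1\}$ and the need to push the parallelism region off $A_1$ is exactly the bookkeeping the paper suppresses.
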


\begin{proof}
 For contradiction, suppose there exists an essential surface $F$ in the composition that does not intersect $X$. Without loss of generality, assume that $F$ lives to the $M_1$ side of $X$. Obviously, $F$ also lives in $M_1$, a tg-hyperbolic manifold. Therefore $F$ must be compressible or boundary parallel in $M_1$. 
 
 If $F$ is compressible, then it has a compressing disk $D$ in $M_1$, which can intersect $X$. But by the incompressibility of $X$ in $M_1$, we can replace it with a compressing disk that avoids $X$, and therefore the disk appears in $M$, contradicting the essentiality of $F$ in $M$.
 
 If $F$ is boundary parallel, then it must be boundary parallel to a component of $K_1$. Furthermore, that component does not intersect the cutting surface $X$. Therefore $F$ must still be boundary parallel to that same component of $K_1$ in $M$, and therefore cannot be essential.  
 
 Therefore no essential sphere, annulus, or torus exists in $M$ without intersecting $X$. 
\end{proof}

We now proceed by eliminating all possible intersection curves in $F \cap X$. 

\begin{lemma}\label{lemma: sing hyp, ess surface cannot intersect}
Let $(S_1 \times I, K_1, C_1), (S_2 \times I, K_2, C_2)$ be two tg-hyperbolic triples with singular corks. Any essential sphere, annulus, or torus $F$ that exists in the composition $M = (S_1 \times I, K_1, C_1) \#_s (S_2 \times I, K_2, C_2)$ has an embedding in $M$ that does not intersect the gluing surface $X$.
\end{lemma}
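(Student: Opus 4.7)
My plan is a standard transversality argument. Take $F$ to be an essential sphere, annulus, or torus in $M$, and realize it in its isotopy class so that $|F \cap X|$ is minimized among transverse embeddings. I will argue the minimum is zero. By Lemma \ref{lemma: disk generates sphere} I may treat any putative essential disk as producing an essential sphere, so the list ``sphere, annulus, torus'' is exhaustive. The crucial topological fact is that each $X_i$ is a once-punctured annulus, i.e.\ a pair of pants whose three boundary circles sit respectively on the inner surface boundary of $M$, on the outer surface boundary of $M$, and as a meridian of $K$. Hence every simple closed curve on $X_i$ either bounds a disk in $X_i$ or is parallel to one of these three boundary components.

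First I would eliminate the trivial components of $F \cap X$. Let $c$ be an innermost trivial curve on $X_i$, bounding a disk $D \subset X_i$ whose interior is disjoint from $F \cap X$. When $F$ is an annulus or torus, essentiality gives incompressibility of $F$, so $c$ must bound a disk $E$ on $F$. The sphere $D \cup E$ lies in $M$; if it were essential, we would have reduced to the sphere case on a surface with strictly fewer intersections, contradicting minimality after rerunning the argument; otherwise it bounds a ball, across which $F$ can be isotoped to drop $|F \cap X|$, again contradicting minimality. When $F$ is itself a sphere, I would instead surger along $D$ to produce two spheres $F_1, F_2$ each with strictly fewer intersections with $X$; essentiality of $F$ forces at least one of $F_1, F_2$ to be essential, contradicting minimality.

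Second, I would rule out the remaining (essential) curves of $F \cap X$. Each such curve is parallel on $X_i$ to one of the three boundary components, and so cobounds an annulus $A \subset X_i$ with that boundary. Choosing an outermost such curve $c$, the annulus $A$ contains no other intersection curves. If $c$ is parallel to the inner or outer surface boundary, I would push $A$ slightly into one of $M_1$ or $M_2$, combine with the appropriate component of $F$ cut along $c$, and obtain an embedded surface lying entirely in one factor $M_i \pmc X_i$. Together with the tg-hyperbolicity of $M_i$ and the incompressibility of $X_i$ in $M_i$ (from the previous lemma), this new surface must be inessential, providing an isotopy of $F$ that strictly decreases $|F \cap X|$. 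If $c$ is parallel to the meridian of $K$, a parallel surgery works, but one has to argue that the capped-off surface in $M_j$ does not become boundary-parallel to $K$ in a way that obstructs reducing $|F \cap X|$ in $M$.

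The main obstacle I anticipate is the meridional case: the surgery must be carried out while keeping the resulting surface embedded and essential in the appropriate factor, and one must rule out the degenerate situation in which the piece of $F$ separated off by the annulus $A$ is itself boundary-parallel to the meridian it was glued to (which would hide the necessary contradiction). A secondary subtlety is tracking the behavior of $\partial F$ when $F$ is an annulus, since pieces of $\partial F$ may lie on either knot boundary or on $\partial(S \times I)$, and each surgery must be verified to keep the resulting surfaces in their proper class so that the tg-hyperbolicity of $M_1$ and $M_2$ can be invoked cleanly.
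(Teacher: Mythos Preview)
Your outline has a genuine gap: it handles only closed curves in $F\cap X$, but when $F$ is an essential annulus with boundary on the inner or outer surface of $S\times I$, the intersection $F\cap X$ consists of properly embedded \emph{arcs} in the $X_i$. This is not the ``secondary subtlety'' you flag at the end; it is the bulk of the argument. The paper first reduces to the situation where $F$ has no boundary on $\partial N(K)$ (a reduction you omit), and then disposes of arcs trivial on $X$, arcs trivial on $F$, and parallel arcs by innermost/outermost surgeries analogous to your circle arguments. After all that, there remain three nontrivial, pairwise nonparallel arc types on each $X_i$ (Figure~\ref{fig: last sing int arc cases}), and these cannot be removed by working on one $X_i$ at a time. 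The final step is a parity/compatibility argument across both components of $X=X_1\cup X_2$: each boundary circle of $F$ must cross $X$ an even number of times, and matching endpoint counts on the inner versus outer boundaries of $X_1$ and $X_2$, together with the earlier ``identical intersection curves'' case, rules out every remaining configuration. Your plan has no mechanism for this step, and nothing in the single-factor surgery strategy you describe will supply it.

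A smaller issue: in your treatment of nontrivial closed curves, cutting $F$ along an outermost $c$ and capping with the annulus $A\subset X_i$ does not produce a surface ``lying entirely in one factor $M_i\pmc X_i$''; it produces surfaces in $M$ with strictly fewer intersections with $X$. The contradiction then comes from the minimality hypothesis (the new surfaces cannot be essential) followed by ruling out compressibility and boundary-parallelism case by case, not from invoking tg-hyperbolicity of a single $M_i$ directly.
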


\begin{proof}
 First, we show that we can assume $F$ does not have boundary on $K$. If $F$ has boundary on $K$, it must be an annulus. The other boundary of $F$ can therefore be on the inner surface, the outer surface, or also on $K$. So $F$ can have boundary on one or two components of $K$; call these components $J_1$ and $J_2$. 
 Note that $J_2$ might be the same component as $J_1$, or $J_2$ might not exist. Then we can take the boundary of a regular neighborhood of $F \cup J_1 \cup J_2$ to obtain a torus or annulus $F'$. (In the case $J_1 = J_2$, two tori result. Take the outer one.) If $F'$ is also essential, then since $F'$ does not have boundary on $K$, we can proceed by only considering possibilities for essential surfaces that do not have boundary on $K$. 
 So assume that $F'$ is not essential. Clearly, if $F'$ is an annulus, any compressing disk for $F'$ would imply a compressing disk for $F$. If $F'$ is a torus, compressing it generates a sphere, which has components of $K$ to one side and boundaries of $S \times I$ to the other, so it is essential. 
 
  So $F'$ must be boundary parallel. This implies both boundaries of $F$ are on $K$ and $J_1 = J_2$ and that $F$ is boundary compressible. Therefore we proceed by only considering surfaces that do not have boundary on $K$. 
      \newline

  In the rest of the proof, we will often refer to \emph{trivial} circles and \emph{parallel} circles. A circle is trivial on a surface if it bounds a disk on that surface, and two circles embedded in a surface are parallel if they cobound an annulus on the surface. 

    Furthermore, in each of the following cases we can first assume that we have chosen $F$ such that $F$ minimizes the number of intersection curves in $F \cap X$. Note that this minimality is only over surfaces that do not have boundary on $K$. We can now proceed with eliminating intersection curves.
 \newline

 



\noindent \textbf{Trivial circles:} Suppose there is an intersection curve  that is a trivial circle on $F$. Choose an innermost such called $\alpha$. Since it is a trivial circle on $F$, it must bound a disk $D$ with $D \subset F$. Since $X$ is incompressible, $\alpha$ must also bound a disk $E$ on $X$. Now we can consider the sphere $F'$ created by gluing these two disks together along $\alpha$. If $F'$ bounds a ball, then we can push the entire sphere to either side of $X$. This means that we could have pushed the disk $D$ on $F$ to the other side of $X$, thereby eliminating the intersection curve $\alpha$. This is a contradiction to the minimality of intersection curves. If the sphere does not bound a ball, then it is an essential sphere that can be pushed off $X$ and therefore also lives in $M_1$, which is a contradiction to the tg-hyperbolicity of $M_1$. Thus, we can eliminate the case in which $\alpha$ is a trivial circle on $F$. 

Note that by eliminating intersection circles that are trivial on $F$, we also completely eliminate spheres because any circle on a sphere is trivial. 

Any intersection curve that is nontrivial on $F$ but trivial on $X$ would bound a compressing disk for $F$, implying $F$ is not essential. So we have eliminated all trivial circle intersections on $F$ and on $X$.
\newline

\noindent \textbf{Identical intersection curves in $X_i$:} Suppose $X_1$ and $X_2$ have identical sets of intersection curves (up to isotopy). By identical, we mean that we could glue $X_1$ to $X_2$ with an orientation preserving homeomorphism $\phi$ such that $\phi$ identifies intersection curves $\alpha_i$ on $X_1$ with intersection curves $\beta_i$ on $X_2$. This means that we can undo the composition and glue each $M_i$ back up while preserving the intersection curves. This process yields two separate surfaces, $F_1$ living in $M_1$ and $F_2$ living in $M_2$. Neither $F_i$ can be essential because they both live in tg-hyperbolic manifolds. Also note that each $F_i$ must be either an annulus or a torus. This is because for either to be a sphere,  $F$ must have had a disk to one side of $X$, which is impossible since we eliminated trivial intersection circles on $F$. 

If either is compressible, then it must have a compressing disk $D$. Since $X$ is incompressible, we can isotope $D$ such that $D \cap X$ is empty. Therefore $D$ still exists in $M$ for $F$, which implies that $F$ was compressible and therefore not essential.

So each $F_i$ must be boundary parallel in $M_i$. Also, they both must be boundary parallel to the same thing: either the inner surface, the outer surface, or a component of $K_i$ since both $F_1$ and $F_2$ have the same set of intersection curves with $X$. 

If both $F_i$s are boundary parallel to the inner surface of $M_i$, then $F$ must also be boundary parallel to the inner surface of $M$. Similarly, if both $F_i$s are boundary parallel to the outer surface of $M_i$, then $F$ is boundary parallel to the outer surface of $M$. 
If both $F_i$s are boundary parallel to a component of $K_i$, then $F$ must also be boundary parallel to a component of $K_1 \# K_2$. This is because all the intersection curves on $X$ must be circles boundary parallel to the object that each $F_i$ is boundary parallel to. 

\medskip

\noindent \textbf{Nontrivial intersection circles:} Next, we consider sets of intersection circles that are nontrivial on each of $F$ and $X$. Possibilities are shown in Figure \ref{fig:single parallel curve}. Note that in all three cases, the curves are parallel on $X_i$ to the inner surface, the outer surface or to the knot. 

\begin{figure}[htbp]
    \centering
    \includegraphics[width=0.5\textwidth]{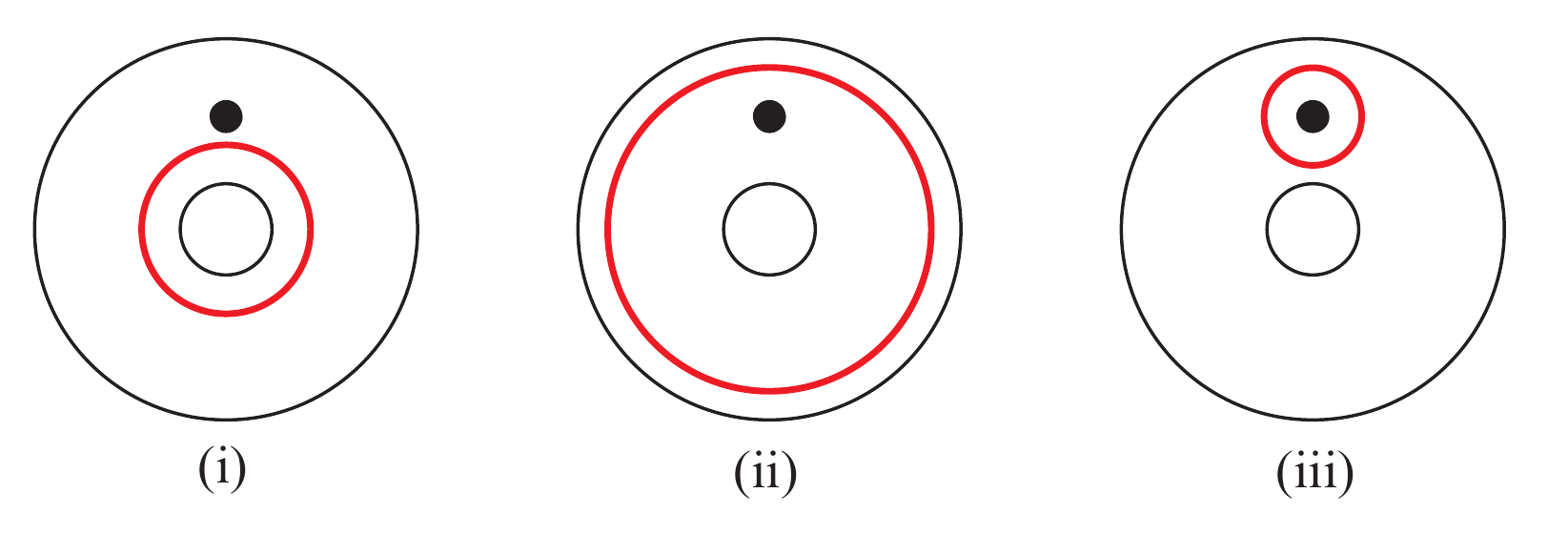}
    \caption{Intersection circle parallel to the inner surface (i), the outer surface (ii), and the puncture from $K_1 \# K_2$ (iii).}
    \label{fig:single parallel curve}
\end{figure}

Suppose $F$ has an outermost intersection circle $\alpha$ that is parallel to the outer surface, as shown in Case (ii). This means $\alpha \cup \partial_{\text{outer}} M$ bound an annulus $A$, and $A$ does not have any intersection curves. Then we can cut $F$ along $\alpha$, and glue on two copies of $A$, isotoping each slightly off of $X$. If $F$ is a torus, then this yields an annulus $F'$. If $F$ is an annulus, then this yields two annuli $F_1$ and $F_2$. We first consider $F$ a torus, with $F'$ an annulus. If $F'$ is essential, then this contradicts minimality of intersection curves of $F$. So $F'$ must be compressible or boundary parallel. If $F'$ is compressible, then it has a compressing disk $D$. We can always isotope $\partial D$ to be entirely on a part of $F'$ that does not intersect $\alpha$, and therefore $D$ must still exist for $F$. Thus $F$ was not essential. If $F'$ is boundary parallel, then it must be boundary parallel to the outer surface (since it has both boundary circles on the outer surface). But this implies that $F$ was boundary parallel to the outer surface in $M$, and thus $F$ could not have been essential in $M$. 

Now consider $F$ an annulus, and $F_1$, $F_2$ also annuli. If either $F_i$ are essential, this contradicts minimality of intersection curves of $F$. So both must be either boundary parallel or compressible. If either is compressible, then its compressing disk $D$ must still exist for $F$, and thus $F$ could not have been essential. Therefore both $F_1$ and $F_2$ must be boundary parallel. Furthermore, they both must be boundary parallel to the outer surface, since they both have curves isotopic to a nontrivial circle on the outer surface. But this implies that $F$ was also boundary parallel to the outer surface, and thus not essential. 

Therefore we can eliminate the case in which $F$ has an intersection circle $\alpha$ that is parallel to the outer surface on $X$. Replacing the word `outer' with `inner' in the above proof also proves the case for $\alpha$ parallel to the inner surface. So we just consider $\alpha$ parallel to a puncture from $K$. Again, the same set of arguments apply to eliminate this case if we just consider the cusp boundary of $K$ instead of the outer surface.

Since we have eliminated all possible intersection circles, we are just left with the possibility of intersection arcs on $X_i$. Note that  this implies $F$ has boundary on $\partial M$, and therefore $F$ must be an annulus for the rest of these arguments eliminating arcs.



\medskip 

\noindent \textbf{Intersection arcs that are trivial on $X$:} First, it is relatively easy to eliminate trivial intersection arcs on $X$. Assume $F$ has the minimum number of intersection curves, and has an intersection arc $\alpha$ that is trivial on $X$. By definition, $\alpha$ bounds a disk $D$ with one arc on $\partial (S \times I)$. Since $F$ is essential, $D$ cannot be a boundary compression disk, and therefore $F$ has some disk $E \subseteq F$ such that $E$ has boundary defined by $\alpha$ and a single arc on $\partial F$. But together with a disk on $\partial M$, these disks bound a sphere $F'$ completely to one side of $X$, say to the $M_1$ side. We can therefore consider it in  $M_1$. Then $F'$ is a sphere in a tg-hyperbolic manifold, and so must bound a ball. This implies that we could have pushed $E$ through to the other side of $X$ in $M$, thereby eliminating the intersection curve $\alpha$. This is a contradiction to the minimality of intersection curves, and therefore we can eliminate trivial arcs on $X$.

\bigskip  

\noindent \textbf{Intersection arcs that are trivial on $F$:}  Next, we eliminate any trivial intersection arc on $F$. We first show this is equivalent to proving that any intersection arc $\alpha$ must have endpoints on different boundary circles of $F$. For contradiction, assume not. Then $\alpha$ is an intersection arc with both endpoints on the same boundary circle of $F$. This also implies that both endpoints of $\alpha$ are on the same boundary of $\partial (S \times I)$. 
The only option is an arc $\alpha$ on $X$ that separates the puncture from the boundary that it does not intersect.
Then together with a single arc  $\alpha'$ on $\partial F$, $\alpha$ bounds a disk $D$ on $F$. We also know that $\alpha$ together with a single arc $\gamma$ on $\partial (S \times I)$ bounds a once-punctured disk $D'$ on $X$
. Since $\partial F$ must be on the same boundary of $S \times I$ as $\gamma$, we consider the disk $E$ on $\partial (S \times I)$ bounded by $\gamma \cup \alpha'$. Taking the union $D \cup D' \cup E$ then yields a once punctured sphere $F'$. But this is impossible, since the one puncture comes from $K$ and each component of $K$ is a connected 1-manifold. 


The above arguments imply that any arc $\alpha$ cannot be trivial on $F$, and so must have endpoints on different boundary circles of $F$. 

\medskip

\noindent \textbf{Parallel arcs:} Suppose $F$ has the minimum number of intersection curves with $X$, and there are parallel intersection arcs on $X$. 
Then  we can pick parallel arcs $\alpha$ and $\beta$ on $X_i$ such that, together with two arcs on $\partial X_i$, bound a disk $D$ on $X_i$, and $D$ has empty intersection with any other intersection curve. Then we cut $F$ along $\alpha$ and $\beta$ into two disks $D_1$ and $D_2$, glue on a copy of $D$ to each and isotope slightly so the copies of $D$ no longer intersect $X_i$. This yields two annuli $F_1$ and $F_2$ whose union has fewer intersection curves with $X$ than $F$. If either is essential, this contradicts minimality of intersection curves. So both $F_1$ and $F_2$ must be boundary parallel or compressible.

Suppose $F_i$ is compressible. Then it has a compressing disk $D'$. So compress along $D'$, and cap off the trivial boundary circles of $F_i$ to obtain two spheres. Since we have already eliminated the existence of essential spheres, both must bound a ball. But this implies that $F_i$ bounds a thickened disk, and therefore $D_i$ can be isotoped through that thickened disk to $D$ and then a little beyond, which is an isotopy of $F$ that lowers its number of intersections with $X$, a contradiction to the minimality of intersection curves. 

This means that both $F_1$ and $F_2$ must be boundary parallel. We know neither can be boundary parallel to a component of the link, since the boundaries of $F$ are on the boundary surfaces of $S \times I$. 
Also, we know that $F_1$ and $F_2$ must be boundary parallel to the same surface (either inner or outer). Now we consider the three possibilities for parallel intersection arcs, and eliminate each. 

\begin{figure}[htbp]
    \centering
    \includegraphics[width=0.5\textwidth]{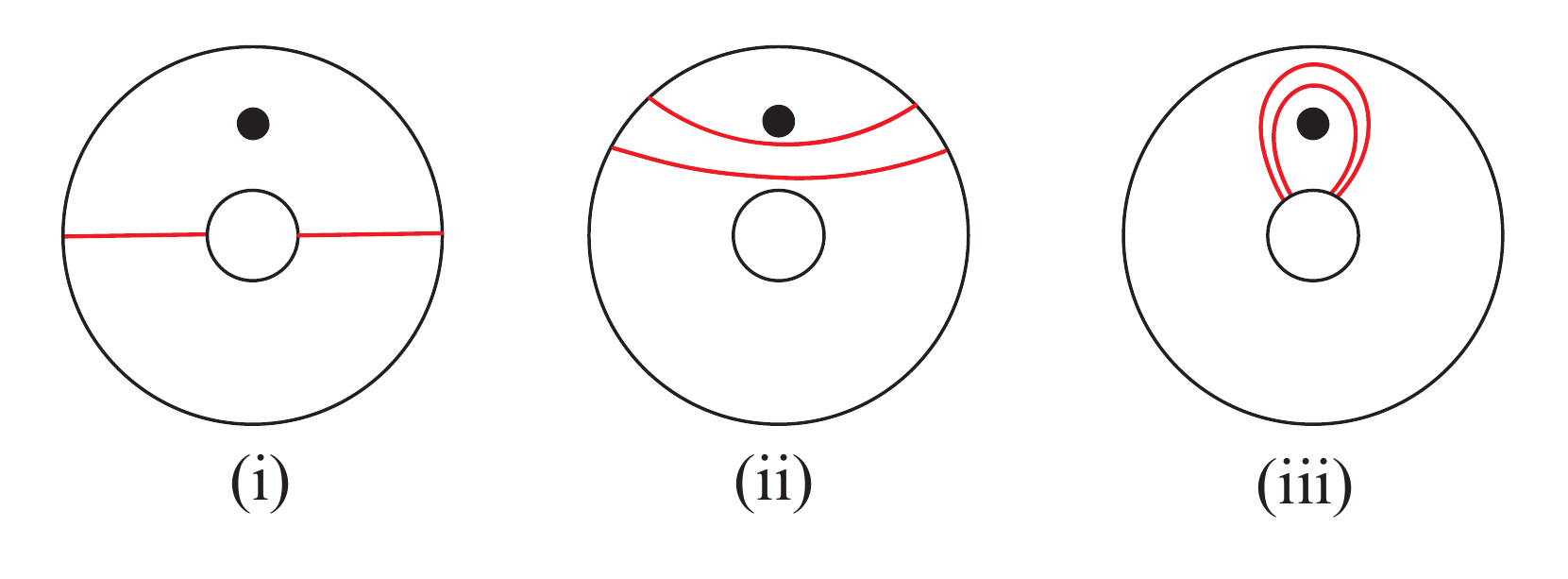}
    \caption{The three possibilities for parallel intersection arcs $\alpha$ and $\beta$.}
    \label{fig: parallel int arc cases}
\end{figure}
Case (i) of Figure \ref{fig: parallel int arc cases} is not possible, since it requires that for each $F_i$, one boundary circle is on the inner surface and one is on the outer surface, making it impossible for either $F_i$ to be boundary-parallel. 

Consider case (ii) of Figure \ref{fig: parallel int arc cases}. Since both arcs have endpoints on the outer surface, we know that $F_1$ and $F_2$ must have boundary circles on the outer surface and therefore can only be boundary parallel to the outer surface. We also know that $F_1$ includes the arc $\beta$, and since $F_1$ is boundary parallel to the outer surface, $\beta$ must be boundary parallel as well. This means that together with an arc on the outer boundary, $\beta$ bounds a disk. But this directly contradicts the boundary incompressibility of $X$. Therefore $F_1$ and $F_2$ cannot be boundary parallel to the outer surface (and so must be essential, a contradiction to minimality of intersection curves). 
Case (iii) follows from the proof of case (ii) above and the fact that arguments about the inner and outer surfaces are symmetrical. 

This means that no parallel intersection arcs can exist on $X$. At this point, we have eliminated all intersection circles, trivial intersection arcs, and parallel intersection arcs. This leaves us with only three possible sets of intersection curves, shown in Figure \ref{fig: last sing int arc cases}. 

\begin{figure}[htbp]
    \centering
    \includegraphics[width=0.5\textwidth]{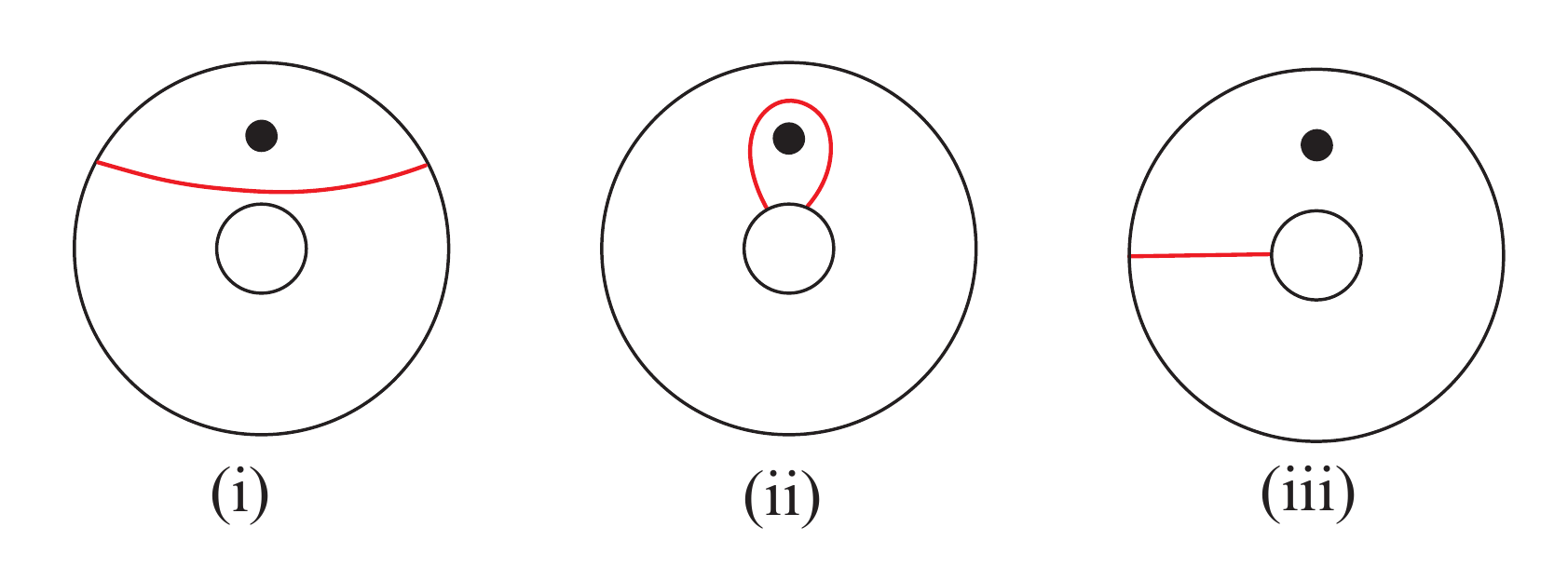}
    \caption{The last remaining possibilities for intersection arcs on $X_1$ and $X_2$.}
    \label{fig: last sing int arc cases}
\end{figure}

\bigskip

\noindent \textbf{Nontrivial, nonparallel arcs:} We proved above that each $X_i$ must have a different set of intersection curves. Therefore there are three possible pairs of intersection curves left when considering both $X_1$ and $X_2$. While previous arguments looked at just a set of intersection curves on a single $X_i$, these last six cases require us to think about the set of intersection curves on $X = X_1 \cup X_2$. 

Consider the boundary circles of $F$. We know from previous arguments that each intersection arc must have endpoints on different boundary circles of $F$. Taking both $X_1$ and $X_2$ into account, we also know that each boundary circle must cross $X$ an even number of times (but this could happen from crossing $X_1$ and odd number of times and $X_2$ an odd number of times). However, this implies that the cases in which one of $X_1$ and $X_2$ has a curve that looks like Case (iii) in Figure \ref{fig: last sing int arc cases} and the other has a different intersection curve are impossible.  Also, when $X_1$ has an arc with both boundary circles on the same boundary surface, then we have a contradiction unless $X_2$ also has an arc with both boundary circles on that same surface. But this is also impossible, since each $X_i$ must have a different set of intersection curves. 

The above arguments show that $X_1$ and $X_2$ cannot have different sets of intersection curves, contradicting the fact we proved earlier that each $X_i$ must have a different set of intersection curves. Therefore, $F \cap X$ is empty for any essential surface $F$ embedded in a way that minimizes the number of intersection curves in $F \cap X$. 
\end{proof}

We can now prove the main result of this subsection. 

\begin{theorem}\label{thm: singular tg-hyperbolicity}
Given two non-classical virtual knots  that are tg-hyperbolic, with triples\\
$(S_1 \times I, K_1, C_1), (S_2 \times I, K_2, C_2)$ such that both $C_1$ and $C_2$ are singular, the singular composition \newline $(S_1 \times I, K_1, C_1) \#_s (S_2 \times I, K_2, C_2)$ is tg-hyperbolic. Further, $g(K_1 \# K_2) = g(K_1) + g(K_2) -1$.
\end{theorem}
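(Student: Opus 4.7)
The plan is to assemble the three preceding lemmas of this subsection (and the Thurston criterion from the introduction) and then derive the genus equality from the Kauffman--Manturov bound. All of the technical work has been done; the present theorem is essentially a packaging step.

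By Thurston's criterion recalled in the introduction, it suffices to show that $M = (S_1 \times I, K_1, C_1) \#_s (S_2 \times I, K_2, C_2)$ contains no essential spheres, annuli, or tori. Let $F \subset M$ be an essential sphere, annulus, or torus, chosen so that $|F \cap X|$ is minimal among such surfaces that do not have boundary on $K$. Lemma \ref{must intersect (sing)} forces $F \cap X \neq \emptyset$. On the other hand, Lemma \ref{lemma: sing hyp, ess surface cannot intersect} shows that $F$ admits an embedding with $F \cap X = \emptyset$. These are contradictory, so no such $F$ exists. Essential disks are then ruled out by Lemma \ref{lemma: disk generates sphere}, since any essential disk would produce an essential sphere. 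Hence $M$ is tg-hyperbolic.

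For the genus statement, the singular composition is by construction realized in a thickened surface of genus $g(S_1) + g(S_2) - 1$. Since each $K_i$ is tg-hyperbolic in $S_i \times I$, the observation in the introduction that tg-hyperbolic realization pins the genus of a virtual knot to the genus of the ambient surface gives $g(K_i) = g(S_i)$. The Kauffman--Manturov inequality $g(K_1 \# K_2) \geq g(K_1) + g(K_2) - 1$ combined with the realization of $K_1 \# K_2$ in a surface of genus $g(S_1) + g(S_2) - 1$ forces equality; moreover, having just shown $K_1 \# K_2$ is tg-hyperbolic in that surface, the same observation ensures this is the minimal ambient genus, so $g(K_1 \# K_2) = g(K_1) + g(K_2) - 1$.

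The main obstacle has already been absorbed into the earlier lemmas: specifically, the branch-by-branch elimination in Lemma \ref{lemma: sing hyp, ess surface cannot intersect} (trivial circles on $F$ or on $X$, identical intersection patterns on $X_1$ and $X_2$, nontrivial parallel circles, trivial arcs, parallel arcs, and the residual non-trivial/non-parallel arc configurations) does all the real work. The only minor care needed in writing the proof above is the preliminary reduction, already handled in Lemma \ref{lemma: sing hyp, ess surface cannot intersect}, that we may assume $F$ has no boundary on $K$; after that the assembly is immediate.
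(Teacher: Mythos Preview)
Your proposal is correct and follows essentially the same approach as the paper: assemble Lemmas \ref{lemma: disk generates sphere}, \ref{must intersect (sing)}, and \ref{lemma: sing hyp, ess surface cannot intersect} to rule out essential disks, spheres, annuli, and tori, then invoke Thurston's criterion. For the genus statement the paper argues directly that tg-hyperbolicity forces the ambient surface to be minimal genus; your additional appeal to the Kauffman--Manturov lower bound is correct but redundant, since the tg-hyperbolicity conclusion alone already pins $g(K_1\#K_2)$ to $g(S_1)+g(S_2)-1$.
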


\begin{proof}
We know from Lemma \ref{lemma: disk generates sphere} that an essential disk can only exist if there is an essential sphere. By Lemma \ref{must intersect (sing)}, any essential sphere, torus, or annulus in $M$ must be embedded in such a way that it intersects $X$. However, we eliminated all possible sets of intersection curves for an essential sphere, torus, or annulus in the singular composition of two tg-hyperbolic triples using Lemma \ref{lemma: sing hyp, ess surface cannot intersect}. This implies that no essential disk, sphere, annulus, or torus exists in the manifold $M$. By the work of Thurston \cite{Thurston hyperbolization}, this implies that $M$ is a tg-hyperbolic manifold. 

 Since $K_1$ and $K_2$ are tg-hyperbolic, $S_1$ and $S_2$ must be minimal genus for those knots. Since $(S_1 \times I) \#_s (S_2 \times I$) has genus $g(K_1) + g(K_2) -1$ and  $M$ is tg-hyperbolic and therefore minimal genus, we have that $g(K_1 \# K_2) = g(K_1) + g(K_2) -1$. 
\end{proof}

\subsection{TG-Hyperbolicity for Hyperbolically Composable Composition}\label{subsect: comp is tg-hyperbolic}

We prove a similar sequence of lemmas as in the previous subsection in order to prove the following.

\begin{theorem} \label{thm: hyperbolicallycomposable}The nonsingular composition of two hyperbolically composable  triples $(S_1 \times I, K_1, C_1)$ and $(S_2 \times I, K_2, C_2)$ is tg-hyperbolic, with genus $g(K_1 \# K_2) = g(K_1) + g(K_2)$.
\end{theorem}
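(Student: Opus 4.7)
My plan is to follow the template of the proof of Theorem \ref{thm: singular tg-hyperbolicity}: isolate a gluing surface in $M$, verify it is essential, use the hyperbolic composability hypothesis to exclude essential surfaces disjoint from the gluing surface, and then eliminate all possible intersection patterns by casework. Here the gluing surface $X$ is the annulus $(\partial D) \times I$ that was the side of the removed cork, punctured twice by $K_1 \# K_2$; in the knot exterior $M$ it is a four-holed sphere whose boundary consists of two surface-boundary circles and two knot-cusp circles. Write $X_i$ for the image of $X$ sitting inside $M_i \pmc C_i$, so that $X = X_1 = X_2$ after gluing. The first step is to show that $X$ is incompressible and boundary-incompressible in $M$, following the arguments of Section \ref{subsect: sing comp tg-hyperbolicity}: a nontrivial curve on $X$ bounding a disk in $M$ would either encircle an incompressible surface-boundary circle, or enclose a knot puncture (producing a sphere meeting $K_1 \# K_2$ in a single point, impossible in $S^3$), or enclose a knot puncture together with a boundary circle (ruled out analogously); boundary-incompressibility follows by the standard surgery upgrading a boundary-compressing disk to a compressing disk.

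The next and most subtle step is the analogue of Lemma \ref{must intersect (sing)}: after the standard reduction to essential surfaces not bounded on $K_1 \# K_2$, any essential sphere, annulus, or torus $F$ in $M$ must intersect $X$. If instead $F \subset M_1 \pmc C_1$, the hyperbolic composability of $(S_1 \times I, K_1, C_1)$ provides a witness triple $(S' \times I, K', C')$ whose nonsingular composition $M^{*}$ is tg-hyperbolic, and the gluing annulus $X^{*}$ of $M^{*}$ is incompressible and boundary-incompressible by Step 1 applied there. Any compressing disk for $F$ in $M^{*}$ can be pushed off $X^{*}$ by innermost-disk surgery and so would already lie in $M_1 \pmc C_1$; similarly, a boundary-parallelism of $F$ in $M^{*}$ can be cut along $X^{*}$, and by the incompressibility of $X^{*}$ descends to a boundary-parallelism in $M_1 \pmc C_1$. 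Thus $F$ would persist as essential in $M^{*}$, contradicting its tg-hyperbolicity. The symmetric argument works for $F \subset M_2 \pmc C_2$.

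The final step is intersection casework, which directly parallels Lemma \ref{lemma: sing hyp, ess surface cannot intersect} with modest additions for the extra knot-puncture boundary of $X$. Trivial intersection circles on $F$ or on $X$ are removed by innermost-disk surgery using tg-hyperbolicity of the $M_i$ together with incompressibility of $X$; nontrivial intersection circles on $X_i$ must be parallel to one of its four boundary components, and in each case the annulus-surgery of Lemma \ref{lemma: sing hyp, ess surface cannot intersect} forces $F$ to be boundary-parallel or compressible; trivial arcs on $X$ are killed by the sphere-ball argument in $M_i$; arcs with both endpoints on the same boundary circle of $F$ produce a once-punctured sphere, which is impossible; parallel arcs on $X$ are eliminated by boundary-incompressibility of $X$; and the remaining non-parallel, non-trivial arc configurations are ruled out by a parity argument counting endpoints of $F$-boundary circles crossing $X$. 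Combined with Lemma \ref{lemma: disk generates sphere}, this shows $M$ contains no essential disks, spheres, annuli, or tori, so Thurston's hyperbolization gives tg-hyperbolicity of $M$. The genus equality then follows from the Kauffman--Manturov bound $g(K_1 \# K_2) \leq g(K_1) + g(K_2)$, since tg-hyperbolicity forces the realizing surface of genus exactly $g(S_1) + g(S_2)$ to be of minimal genus. The main obstacle is Step 2: essentialness is not monotone under enlargement of the ambient manifold, so the descent of compressing disks and boundary-parallelisms from $M^{*}$ back into $M_i \pmc C_i$ must be handled carefully, and it is precisely this descent that the hyperbolic composability hypothesis is tailored to enable.
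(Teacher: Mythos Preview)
Your overall architecture matches the paper's: show the gluing surface $X$ (the twice-punctured annulus $\partial C \cap (S\times I)$) is incompressible and boundary-incompressible, reduce to an essential sphere/annulus/torus $F$ disjoint from $X$, and then use the hyperbolically-composable witness $M^{*}$ to derive a contradiction. Your Step~2 is exactly the paper's final argument, and the ordering (Step~2 before Step~3, or vice versa) is immaterial.

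However, Step~3 has two genuine gaps that the paper's Lemma~\ref{no intersections} handles and your sketch does not.

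First, your claim that ``nontrivial intersection circles on $X_i$ must be parallel to one of its four boundary components'' is false: a four-holed sphere carries essential simple closed curves that are \emph{not} boundary-parallel, namely those separating the boundaries into two pairs. In the paper these are types~(iv) and~(v) of Figure~\ref{fig: types of nontrivial int curves ns} (a curve enclosing both knot-punctures, or a curve enclosing one knot-puncture together with the inner-surface hole). These cases are not disposed of by the annulus-surgery argument from Lemma~\ref{lemma: sing hyp, ess surface cannot intersect}; the paper needs a separate parallel-circles argument producing a torus that is then shown to be inessential by reinserting the cork, and a separate single-circle argument using that $F$ would have to be separating.

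Second, your arc casework invokes ``a parity argument counting endpoints of $F$-boundary circles crossing $X$,'' which is borrowed from the singular proof where $X = X_1 \cup X_2$ has two components and one can compare intersection patterns on $X_1$ versus $X_2$. Here $X$ is a single connected surface, so that comparison is unavailable. The paper instead classifies the residual non-parallel, non-trivial arc patterns into five explicit configurations (Figure~\ref{fig: last nonsingular arcs cases}) and dispatches them individually: two by reinserting the cork and using tg-hyperbolicity of $M_i$, two by a strand-counting (separating) argument, and the last---two arcs each running from inner to outer boundary and separating the two knot-punctures---precisely by invoking that the cork is \emph{nonsingular}. Your sketch never uses nonsingularity of $C_i$, and without it this final configuration cannot be excluded.
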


The fact the triples are both hyperbolically composable implies there exist triples $(S_1' \times I, K_1', C_1')$ and $(S_2' \times I, K_2', C_2')$ such that 
$(S_1 \times I, K_1, C_1) \#_{ns} (S_1' \times I, K_1', C_1')$ and $(S_2 \times I, K_2, C_2) \#_{ns} (S_2' \times I, K_2', C_2')$ are tg-hyperbolic. Let $S = S_1 \# S_2$ and let $K = K_1 \# K_2$. Let $M_i = S_i \times I \setminus K_i$.




Lemma \ref{lemma: disk generates sphere} implies that if we eliminate all essential spheres in $(S_1 \times I, K_1, C_1) \#_{ns} (S_2 \times I, K_2, C_2)$, then we also eliminate all essential disks. 

\begin{lemma}\label{lemma: doublingsurfaceinc}
If $(S_1 \times I, K_1, C_1)$ and $(S_2 \times I, K_2, C_2)$ are tg-hyperbolic, the surface $X = \partial C_1 \cap (S \times I) = \partial C_2 \cap (S \times I)$ is incompressible and boundary incompressible in $M = (S_1 \times I, K_1, C_1) \#_{ns} (S_2 \times I, K_2, C_2)$.
\end{lemma}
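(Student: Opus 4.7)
The plan is to identify the surface $X$ explicitly in the knot exterior and then run essentially the same sort of arguments used in the singular case, simplified by the fact that $X$ has one fewer boundary component.

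First I would set up the topology of $X$. Each cork $C_i = D_i \times I$ is a 3-ball meeting $\partial(S_i \times I)$ exactly along the face $D_i$, so $\partial C_i \setminus D_i$ is a single disk through which the unknotted arc $K_i \cap C_i$ exits $C_i$ transversally at one interior point. After the corks are removed and these two disks are identified to form the nonsingular composition, $X$ becomes a single properly embedded disk in $S \times I$ meeting the composite knot $K = K_1 \# K_2$ transversally in exactly one point (the composition point). Therefore in the knot exterior $M$, the surface $X \cap M$ is a properly embedded annulus: one boundary circle $\partial D_1 = \partial D_2$ lies on $\partial(S \times I)$, and the other is a meridional curve on the cusp torus of $K$.

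For incompressibility, I would suppose there is a compressing disk $D$ with $\partial D = \alpha \subset X$. Since $X$ is an annulus, $\alpha$ is parallel to $\partial X$, so on the ambient disk $X \subset S \times I$ it bounds a once-punctured subdisk $\Delta$. Then $D \cup \Delta$ is an embedded 2-sphere in $S \times I$ meeting $K$ in exactly one point, which is impossible because the intersection number of a closed 1-manifold with an embedded 2-sphere is necessarily even. This is the same obstruction used to rule out compressing disks for $X_i$ in the singular case.

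For boundary incompressibility, I would assume a boundary-compressing disk $D$ with $\partial D = \alpha \cup \beta$, where $\alpha$ is an essential arc on $X$ and $\beta$ is an arc on $\partial M$. On an annulus, every essential properly embedded arc has exactly one endpoint on each boundary circle, so one endpoint of $\alpha$ lies on $\partial(S \times I)$ and the other on the cusp torus of $K$. These are distinct components of $\partial M$, so a single connected arc $\beta \subset \partial M$ cannot realize these two endpoints, yielding a contradiction. The main thing to get right is the initial identification of $X$ as an annulus; once that is in hand, the proof is strictly shorter than the pair-of-pants case handled in Subsection 3.1, since there are no essential arcs on $X$ with both endpoints on the same boundary component that need to be analyzed separately.
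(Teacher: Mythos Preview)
Your identification of $X$ is incorrect, and this undermines the rest of the argument. The cork $C_i = D_i \times I$ is a thickened disk where $D_i$ is a disk in the surface $S_i$; hence $C_i$ meets $\partial(S_i \times I)$ in \emph{two} disks, $D_i \times \{0\}$ and $D_i \times \{1\}$, not one. The gluing surface $X = (\partial D_i) \times I$ is therefore an annulus with one boundary circle on the inner surface $S \times \{0\}$ and one on the outer surface $S \times \{1\}$. The unknotted arc $K_i \cap C_i$ has both of its endpoints on this annulus, so in the knot exterior $X$ is a \emph{twice}-punctured annulus (a four-holed sphere), exactly as shown in Figure~\ref{fig: removing C yields X} and stated in the paper's proof. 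It is not the once-punctured disk (annulus in $M$) that you describe.

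Because of this, your incompressibility argument misses the main case: a curve $\beta$ on $X$ can encircle both punctures without encircling the hole, and this is not ruled out by the once-punctured-sphere parity obstruction. The paper handles this case by filling the cork back in and producing an essential sphere in $M_1$, contradicting tg-hyperbolicity of $(S_1 \times I, K_1, C_1)$. Likewise your boundary-incompressibility argument collapses: on a four-holed sphere there \emph{are} essential arcs with both endpoints on the same boundary component (for instance an arc from the outer boundary to itself separating the two punctures from the hole), so the ``endpoints on distinct components of $\partial M$'' trick does not apply. The paper deals with such arcs by pushing them to compressing disks (for arcs ending on $\partial(S\times I)$) and by a neighborhood-of-$D \cup K_1$ argument (for arcs between the two punctures). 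You need to redo the analysis with the correct topology of $X$.
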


We include Figure \ref{fig: removing C yields X} to help to visualize the surface $X$. 

\begin{figure}[htbp]
    \centering
    \includegraphics[width=0.5\textwidth]{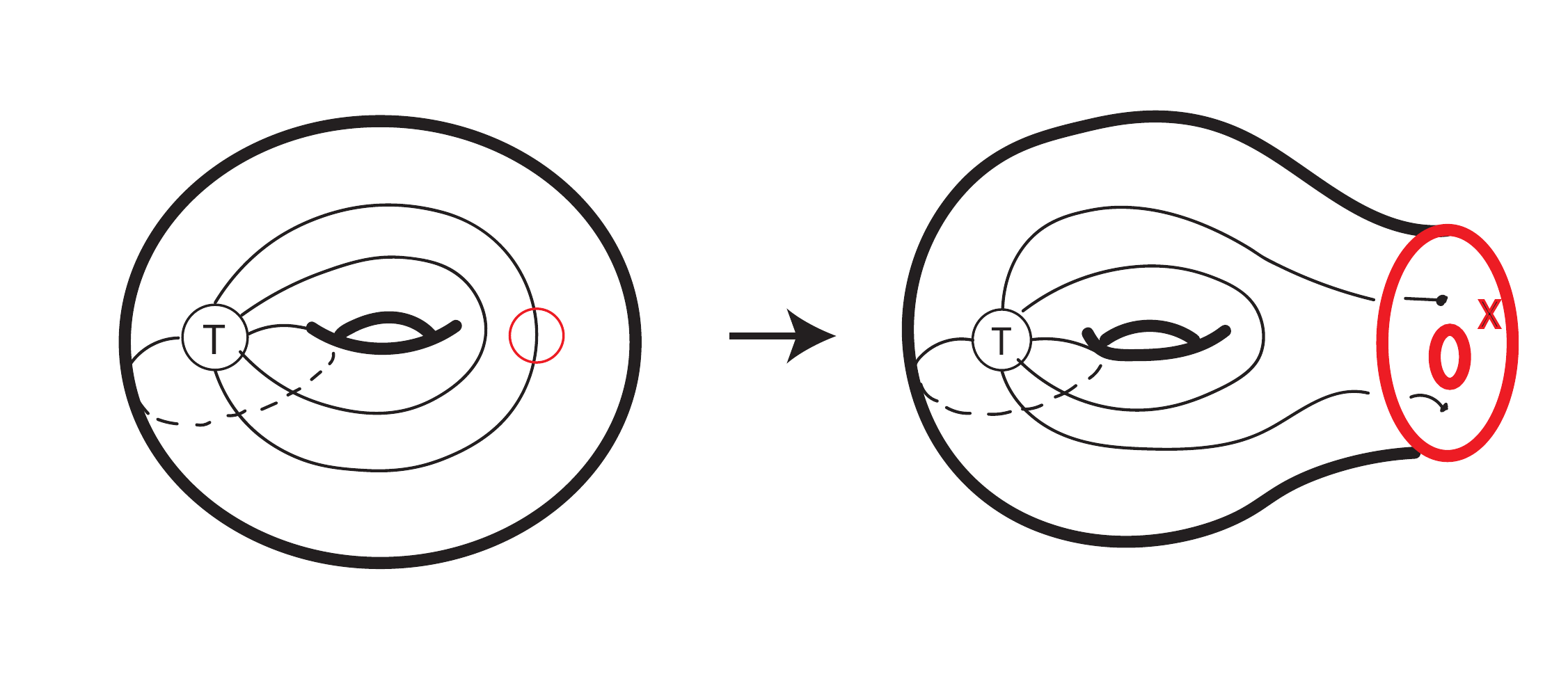}
    \caption{When $C_1$ is removed from $M_1$, the twice punctured annulus is called $X$.}
    \label{fig: removing C yields X}
\end{figure}

\begin{proof}
Suppose $X$ is compressible. Then there exists a compressing disk $D$ with $X \cap \partial D = \beta,$ where $\beta$ is a circle. Since $D$ is a compressing disk, we know that $D$ is completely to one side of $X$. For convenience, we assume $D$ is to the $M_1$ side. We consider the region that $\beta$ encloses on $X$. We refer to the two punctures from $K$ as punctures, and the hole from the inner surface of $M_1$ as a hole. Since $D$ is a disk, it is obvious that $\beta$ cannot enclose the hole because the inner surface has positive genus. So $\beta$ encloses 0, 1, or 2 of the punctures from $K$ and does not enclose the hole.
It is impossible for $\beta$ to enclose only a single puncture from $K$; if a strand of the knot enters the region inside of $D$ it must also leave that region.
If $\beta$ encloses no punctures, then it is trivial on $X$, a contradiction to $D$ being a compressing disk for $X$.

If $\beta$ encloses two punctures, then we can fill the cork $C_1$ back in to obtain $M_1$. Then there is a sphere in $M_1$ obtained by taking $D \cup D'$ where $D'$ is a disk in $C_1$ with boundary $\beta$. This sphere must contain the component of $K_1$ corresponding to the two punctures, but then it is an essential sphere in $M_1$, a contradiction to its tg-hyperboicity. Thus $X$ is incompressible. 


 To prove boundary incompressibility, suppose first  that some arc $\alpha$ properly embedded in  $X$ that cannot be isotoped through $X$ to the boundary $\partial (S \times I)$ bounds a disk $D$ in $M$ with a single arc $\beta$ on $\partial (S \times I)$. We assume $D$ is to the $M_1$ side.
 
 Since $\alpha \cup \beta$ bounds a disk, we know that $\alpha \cup \beta$ is trivial with respect to $M_1 \pmc C_1$. Therefore we can isotope $\beta$ to be entirely on $\partial X$, and then isotoping $\beta$ slightly off of the boundary and onto $X$ we get a compressing disk $E$ for $X$, which has been precluded by the arguments above. 


 Suppose now that there is an arc $\alpha$ in $X$ that begins and ends at the punctures on $X$ and that together with a nontrivial arc in $\partial N(K_1)$ bounds a disk $D$ in $M_1$. If $\alpha$ begins and ends on the same puncture, then it must bound a disk on $X$, contradicting it being a boundary compression. If it begins and ends on different punctures, then the boundary of a neighborhood of $D \cup K_1$ contains a disk that yields a compression of $X$, a contradiction. Therefore $X$ is boundary incompressible. 
\end{proof}


Next, we eliminate all possible sets of intersection curves between an essential surface $F$ and $X$.
\begin{lemma} \label{no intersections}
 If $(S_1 \times I, K_1, C_1)$ and $(S_2 \times I, K_2, C_2)$ are tg-hyperbolic, any essential sphere, annulus, or torus $F$ that exists in $M = (S_1 \times I, K_1, C_1) \#_{ns} (S_2 \times I, K_2, C_2)$ and minimizes the number of intersection curves in $F \cap X$ cannot intersect $X$. 
\end{lemma}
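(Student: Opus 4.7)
The plan is to mimic the proof of Lemma \ref{lemma: sing hyp, ess surface cannot intersect} from the singular case, with modifications because here $X$ is a twice-punctured annulus (topologically a four-holed sphere) instead of a once-punctured annulus. Fix an essential $F$ — sphere, torus, or annulus in $M$ — realizing the minimum value of $|F \cap X|$ among surfaces in its isotopy class. First, by the regular-neighborhood trick used in the singular proof, replace $F$ by $\partial N(F \cup J_1 \cup J_2)$ where $J_1, J_2$ are the components of $K$ carrying $\partial F$; this lets us assume from the outset that $\partial F \cap K = \emptyset$, and any failure of this replacement to be essential already forces $F$ to be inessential in $M$.

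Next I would dispatch trivial intersection circles by the standard innermost-disk procedure. If $\alpha \subset F \cap X$ bounds an innermost disk $D$ on $F$, then incompressibility of $X$ (Lemma \ref{lemma: doublingsurfaceinc}) yields a disk $E \subset X$ with $\partial E = \alpha$; after a small pushoff the sphere $D \cup E$ lies in $M_i \pmc C_i$ for some $i$, and refilling $C_i$ produces a sphere in the tg-hyperbolic manifold $M_i$, which must bound a ball, giving an isotopy of $D$ across $X$ that lowers $|F \cap X|$ and contradicts minimality. A circle trivial on $X$ but essential on $F$ would compress $F$, contradicting essentiality. These two steps also rule out $F$ being a sphere, since every curve on a sphere is trivial.

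The remaining intersection circles are essential on both $F$ and on the four-holed sphere $X$. Up to isotopy on $X$, such a curve is either boundary-parallel (four classes, parallel to the inner surface circle, the outer surface circle, or to one of the two cork-punctures of $K$) or pants-decomposing (three classes, separating the four boundary components of $X$ into pairs). The boundary-parallel classes carry over almost verbatim from the singular proof: cut $F$ along an outermost such circle, cap off with parallel copies of the corresponding annular region on $X$, and either strictly reduce $|F \cap X|$ or conclude $F$ is boundary-parallel in $M$, invoking tg-hyperbolicity of each $M_i$ at the usual juncture. The pants-decomposing classes are the genuinely new work: for each, cut $F$ along the circle, cap with the appropriate pair-of-pants region on $X$, and then refill $C_i$ so the result becomes a closed surface in the tg-hyperbolic manifold $M_i$; tg-hyperbolicity then forces this surface to bound a ball, be parallel to the inner or outer boundary of $M_i$, or be boundary-parallel to a component of $K_i$, and each sub-case produces either a reduction of $|F \cap X|$ or an inessentiality of the original $F$.

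With circle intersections eliminated, $F$ must be an annulus meeting $X$ only in arcs, and the arc analysis transfers with minor bookkeeping changes from the singular proof: trivial arcs on $X$ die via the boundary-compression-to-sphere argument and tg-hyperbolicity of each $M_i$; trivial arcs on $F$ would force a once-punctured sphere in $S^3$, impossible; parallel arcs die by cut-and-cap together with boundary-incompressibility of $X$ (Lemma \ref{lemma: doublingsurfaceinc}); and the last remaining arc patterns are excluded by the parity count on how $\partial F$ crosses $X$. The main obstacle I anticipate is the pants-decomposing circle case, which has no analog in the singular proof because a once-punctured annulus has no pants-separating essential curves; one must carefully track whether the closed surface obtained in $M_i$ after refilling $C_i$ contains zero, one, or two strands of the cork-arc $K_i \cap C_i$, since each sub-case calls for a slightly different invocation of tg-hyperbolicity to finish.
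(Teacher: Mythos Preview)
Your overall strategy matches the paper's, and the circle portions are close enough in spirit, but there is a genuine gap in your arc analysis. You claim the remaining arc patterns are ``excluded by the parity count on how $\partial F$ crosses $X$,'' but parity alone does not finish the job. After eliminating trivial and parallel arcs and applying the parity constraints, the paper is still left with five specific configurations of two nontrivial, nonparallel arcs on $X$ (its Figure~\ref{fig: last nonsingular arcs cases}). Four of these are killed either by reinserting a cork and using tg-hyperbolicity of $M_i$, or by observing that a disk of $F$ would separate $M_i\pmc C_i$ with an odd number of knot-punctures on one side. But the fifth configuration---two arcs, each running from the inner to the outer boundary circle of $X$, together separating the two punctures---survives all of these arguments. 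The disk of $F$ on either side of $X$ is then precisely the disk $E$ appearing in Definition~\ref{def: singular cork}, so this configuration is ruled out \emph{only} by the hypothesis that $C_1$ (or $C_2$) is nonsingular. This is the single place in the entire lemma where nonsingularity of the corks is actually used, and your proposal never invokes it; indeed, without this step the lemma is simply false, since singular corks do produce essential reducing annuli in the composition.

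Your treatment of the pants-decomposing circles is also not what the paper does, and as written it does not parse. You propose to ``cap with the appropriate pair-of-pants region on $X$ and then refill $C_i$ so the result becomes a closed surface in $M_i$,'' but capping a piece of an annulus or torus with a three-holed sphere does not produce a closed surface. The paper instead splits into two subcases. If there are two or more such circles (necessarily all parallel on $X$), it takes an innermost annulus $F_1$ of $F$ and the annulus $F_2$ they cobound on $X$, forms the torus $F' = F_1 \cup F_2$, and shows $F'$ is inessential by refilling the cork and using tg-hyperbolicity of $M_i$; this forces $F'$ to bound a solid torus, allowing $F_1$ to be isotoped across $X$. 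If exactly one such circle $\alpha$ remains, then $F$ is an annulus and $\alpha$ is isotopic on $F$ to both components of $\partial F$; since $\partial F$ must be nontrivial on $\partial(S\times I)$, a short homological argument (in case (iv) $F$ would be separating with an odd puncture count on one side, in case (v) $\alpha$ is null-homotopic in $\partial(S\times I)$) yields the contradiction.
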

\begin{proof}
 Exactly as in the proof of Lemma \ref{lemma: sing hyp, ess surface cannot intersect} we can assume $F$ does not have boundary on $K$. Furthermore, in each of the following cases we can first assume that $F$ has been put in a position that minimizes the number of intersection curves in $F \cap X$ and $F$ has the minimal number of intersection curves among essential spheres, annuli or tori, depending on which $F$ is.

\bigskip

\noindent \textbf{Trivial circles:} Intersection circles that are trivial on either $F$ or $X$ are eliminated exactly as in the corresponding argument in the proof of Lemma \ref{lemma: sing hyp, ess surface cannot intersect}.
\bigskip  

\noindent \textbf{Intersection circles that are nontrivial on both $F$ and $X$:} 
First, we eliminate intersection curves that are parallel to the inner surface, the outer surface, or punctures from $K$. Examples of these three are shown in Cases (i)-(iii) of Figure \ref{fig: types of nontrivial int curves ns}.

\begin{figure}[htbp]
    \centering
    \includegraphics[width=0.5\textwidth]{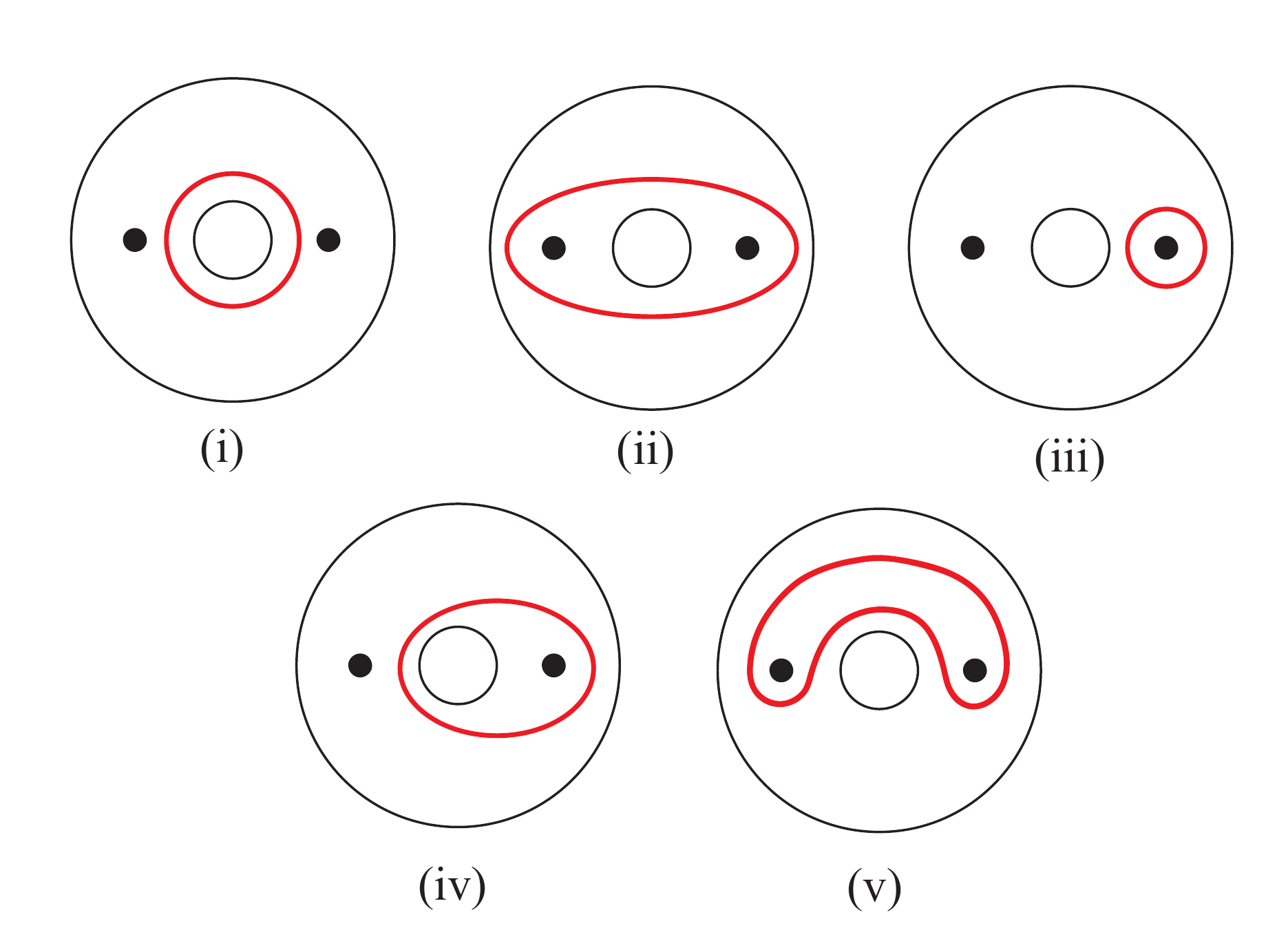}
    \caption{Types of nontrivial intersection curves. Cases (i) and (ii) are parallel to the inner and outer surfaces, respectively. Case (iii) is parallel to a puncture from $K$.}
    \label{fig: types of nontrivial int curves ns}
\end{figure}

Suppose $F$ has an outermost intersection circle $\alpha$ that is parallel to the outer surface, as shown in Case (ii). This means $\alpha \cup \partial_{\text{outer}} M$ bound an annulus $A$, and $A$ does not have any intersection curves. Then we can cut $F$ along $\alpha$, and glue on two copies of $A$, isotoping each slightly off of $X$. If $F$ is a torus, then this yields an annulus $F'$. If $F$ is an annulus, then this yields two annuli $F_1$ and $F_2$. We first consider $F$ a torus, with $F'$ an annulus. If $F'$ is essential, then this contradicts minimality of intersection curves of $F$. So $F'$ must be compressible or boundary parallel. If $F'$ is compressible, then it has a compressing disk $D$. We can always isotope $\partial D$ to be entirely on a part of $F'$ that does not intersect $\alpha$, and therefore $D$ must still exist for $F$. Thus $F$ was not essential. If $F'$ is boundary parallel, then it must be boundary parallel to the outer surface (since it has both boundary circles on the outer surface). But this implies that $F$ was boundary parallel to the outer surface in $M$, and thus $F$ could not have been essential in $M$. 

Now consider $F$ an annulus, and $F_1$, $F_2$ also annuli. If either $F_i$ are essential, this contradicts minimality of intersection curves of $F$. So both must be either boundary parallel or compressible. If either is compressible, then its compressing disk $D$ must still exist for $F$, and thus $F$ could not have been essential. Therefore both $F_1$ and $F_2$ must be boundary parallel. Furthermore, they both must be boundary parallel to the outer surface, since they both have curves isotopic to a nontrivial circle on the outer surface. But this implies that $F$ was also boundary parallel to the outer surface, and thus not essential. 

Therefore we can eliminate the case in which $F$ has an intersection circle $\alpha$ that is parallel to the outer surface on $X$. Replacing the word `outer' with `inner' in the above proof also proves the case for $\alpha$ parallel to the inner surface. So we just consider $\alpha$ parallel to a puncture from $K$. Again, the same set of arguments apply to eliminate this case if we just consider the cusp boundary of $K$ instead of the outer surface. 

Therefore the only possible intersection circles left are nontrivial circles that enclose either two punctures or one puncture and the hole from the inner surface, as in Figure \ref{fig: types of nontrivial int curves ns} (iv) and (v).  Note that for each type, there could be multiple other intersection circles, as long as all are isotopic to each other on $X$. 


\bigskip  

\noindent \textbf{Parallel intersection circles:} We next eliminate parallel circles of Type (iv) or (v). Suppose $F$ had parallel intersection circles. We know that together, any two parallel intersection circles bound an annulus on $F$. Since they are parallel, they also bound an annulus on $X$. Since we have eliminated all other possibilities, we know that every intersection circle must be a circle parallel to any other intersection circle on both $F$ and $X$. Because all annuli corresponding to $F \cap M_1 \pmc C_1$ separate $M_1 \pmc C_1$ we can pick some `innermost' $\alpha_1$ and $\alpha_2$ such that they bound an annulus $F_1$ on $F$ and an annulus $F_2$ on $X$ with each $F_1 \cap \alpha_i = \emptyset$ and $F_2 \cap \alpha_i = \emptyset$ for all other intersection curves $\alpha_i, i \neq 1, 2$. We can therefore consider the torus $F' = F_1 \cup F_2$ obtained by gluing along $\alpha_1$ and $\alpha_2$. We know that $F'$ cannot be essential because we can isotope it slightly to remove all intersections with $X$, a contradiction since we assumed $F$ had a minimal number of intersection curves among essential tori and annuli. Therefore $F'$ must be either boundary parallel or compressible in $M$. 

Suppose $F'$ is boundary parallel. In both case (iv) and (v), we know that neither intersection circle is parallel with the inner or outer surface, which implies that $F'$ cannot be boundary parallel with the inner or outer surface. So $F'$ must be boundary parallel to a component $K'$ of $K$. Since $F_2 \subset X$, $K'$ cannot be knotted, so $F'$ is an unknotted torus, and $K'$ is isotopic to $\alpha_1$ through $M_1 \pmc C_1$. If $\alpha_1$ is isotopic to case (iv) of Figure \ref{fig: types of nontrivial int curves ns}, then when we fill $C_1$ back in to obtain $M_1$, this component $K'$ bounds a disk $D$ punctured once by another component of $K$, which is the one that passes through $C_1$. The boundary of a regular neighborhood of $D \cup K'$  is an essential annulus in $M_1$,  a contradiction to the tg-hyperbolicity of $M_1$. 

If $\alpha_1$ isotopic to case (v) of Figure \ref{fig: types of nontrivial int curves ns}, then we again consider $M_1$ containing $F'$, and close it back up with $C_1$. Then we see that $\alpha_1$ bounds a disk in $C_1$ that is a compressing disk $D$ for $F'$, implying that we could compress along $D$ to get a sphere that does not bound a ball in $M_1$. Since $M_1$ is tg-hyperbolic, this is impossible. Therefore $F'$ cannot be boundary parallel.

So $F'$ must be compressible in $M$. Call its compressing disk $E$. Compression yields a sphere $F''$. Since we eliminated essential spheres, $F''$ must bound a ball, in which case $F'$ must bound a solid torus. But this implies that we could have pushed the annulus $F_1 \subseteq F$ through $X$, eliminating the intersection curves $\alpha_1$ and $\alpha_2$. This is a contradiction to the minimality of intersection curves of $F$. 
Therefore we have reached a contradiction for every possible set of parallel intersection circles, and thus can eliminate parallel circles from consideration.

\medskip
\noindent {\bf One intersection curve of type (iv) or (v):} Since we have eliminated all possibilities for parallel circles, the last two cases for intersection circles  occur for just a single intersection circle that appears as in Figure \ref{fig: types of nontrivial int curves ns} (iv) or (v). Also, since there is just a single intersection circle, we know that $F$ must be an annulus. 

Since we are only concerned with annuli right now, we note that the boundary circles of the essential annulus $F$ must be nontrivial on the outer or inner surface of $S \times I$. Call these boundary circles $\gamma_1$ and $\gamma_2$. 
The fact that the boundary circles of $F$ must be nontrivial on $\partial (S \times I)$ immediately eliminates both cases. This is because all nontrivial circles on an annulus must be isotopic, and thus $\alpha$ must be isotopic to a nontrivial circle on $\partial (S \times I)$ to each side of $X$. Ignoring $K$, it is obvious in case (v) that $\alpha$ is trivial with respect to $\partial (S \times I)$ and so cannot exist. In case (iv), ignoring $K$, $\alpha$ is isotopic to each of the two boundaries of $X$, one on the inner surface and one on the outer surface. So the boundaries of $F$ must be isotopic to one, the other or both of these curves on the boundary of $S \times I$. But then $F$ must be a separating annulus, and therefore there cannot be an odd number of punctures to either side of $\alpha$ in $X$. This is a contradiction, and so we eliminate this case. 

We have now eliminated all possible intersection circles in $F \cap X$. 


\bigskip  

\noindent \textbf{Intersection arcs:}
The endpoints of the arcs must be on either the inner or outer surface (and cannot be on the punctures from $K$) as noted at the beginning of the proof.  

\begin{figure}[htbp]
    \centering
    \includegraphics[width=0.5\textwidth]{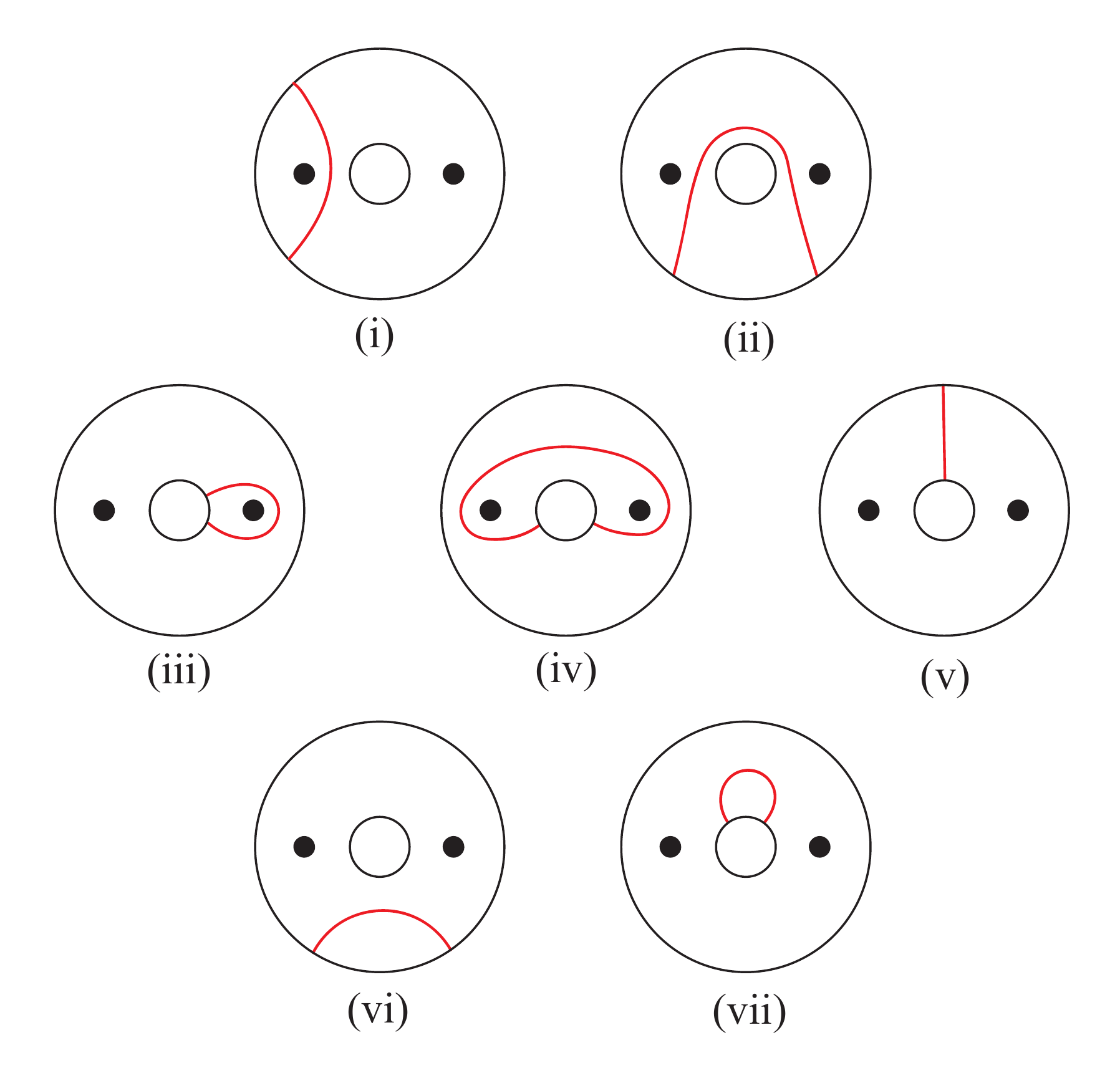}
    \caption{Types of intersection arcs. Cases (vi) and (vii) are trivial on $X$. Cases (i)-(v) are all nontrivial on $X$.}
    \label{fig: types of int arcs nonsingular}
\end{figure}

We also note that since tori and spheres cannot have boundary on the boundary of the manifold $S \times I$, the presence of an intersection arc implies that $F$ must be an annulus. All possible types of intersection arcs up to isotopy are shown in Figure \ref{fig: types of int arcs nonsingular}.

\bigskip  \noindent

\noindent \textbf{Trivial arcs on $X$:} These are eliminated exactly as in the corresponding part of the proof of Lemma \ref{lemma: sing hyp, ess surface cannot intersect}. 
\bigskip  

\noindent \textbf{Trivial arcs on $F$:}  Next, we eliminate any trivial intersection arc on $F$. Note that this is equivalent to proving that any intersection arc $\alpha$ must have endpoints on different boundary circles of $F$. For contradiction, assume not. Then $\alpha$ is an intersection arc with both endpoints on the same boundary circle of $F$. This also implies that both endpoints of $\alpha$ are on the same boundary component of $\partial M$.  Therefore $\alpha$ must look like (i), (ii), (iii), or (iv). First, we consider $\alpha$ to be in either case (i) or (iii). Since we assumed that $\alpha$ has endpoints on the same boundary of $F$, then together with a single arc  $\alpha'$ on $\partial F$, $\alpha$ bounds a disk $D$ on $F$. We also know that $\alpha$ bounds a once punctured disk $D'$ with a single arc $\gamma$ on $\partial M$. Since $\partial F$ must be on the same boundary of $M$ as $\gamma$, we consider the disk $E$ on $\partial M$ bounded by $\gamma \cup \alpha'$. Taking the union $D \cup D' \cup E$ then yields a once punctured sphere $F'$. But this is obviously impossible, since the one puncture comes from $K$ and each component of $K$ is a connected 1-manifold. 

Now suppose  $\alpha$ appears as in Cases (ii) or (iv), and has endpoints on the same boundary of $F$. Without loss of generality, we will consider Case (ii) with respect to the outer boundary; the argument for Case (iv) follows by replacing 'inner boundary' with 'outer boundary.' We know that together with a single arc $\alpha'$ on $\partial F \cap \partial_{\text{outer}}M$, $\alpha$ bounds a disk $D$ on $F$. Now we close up one copy of $M$ by adding back the cork $C$. We can then glue  another disk $D'$ to $D$, with $D' \subset C$ and $D'$ bounded by $\alpha$ and a single arc on $\partial_{\text{outer}}M$. Then $D \cup D' = E$ is a disk with boundary on the outer boundary of one copy of $M$. So it must bound a disk $D''$ on $\partial M$. So $E \cup D''$ is a sphere $F'$, which by construction must have a ball containing a component of $K$ to one side, and thus does not bound a ball. But $M$ is a tg-hyperbolic manifold, which yields a contradiction. Therefore our assumption must have been false, and $\alpha$ cannot be isotopic to the arcs shown in Cases (ii) or (iv) of Figure \ref{fig: types of int arcs nonsingular} if it has endpoints on the same boundary of $F$. 

The above arguments imply that any arc $\alpha$ cannot be trivial on $F$, and so must have endpoints on different boundary circles of $F$. 

\bigskip

\noindent \textbf{Parallel arcs on $X$:} Next, we consider parallel arcs. Suppose $F$ has the minimum number of intersection curves with $X$, and also has two parallel intersection arcs $\alpha$ and $\beta$. From the above arguments, we know that each of $\alpha$ and $\beta$ must have endpoints on different boundary circles of $F$. This implies that $\alpha$ and $\beta$, together with two disjoint arcs on $\partial F$, bound a disk $D$ on $F$. And by definition of parallel arcs, together with two arcs on $\partial X$ they bound a disk $E$ on $X$. Now we cut $F$ at $\alpha$ and $\beta$, and glue on two copies of $E$. This results in two new annuli $F_1$ and $F_2$, with  $F_1 = E \cup D$. Furthermore, we can isotope each $F_i$ slightly off of $X$ so that their union has fewer intersection curves than $F$. If either $F_1$ or $F_2$ is essential, then this contradicts the minimality of number of intersection curves. Therefore each  must be either compressible or boundary parallel in $M$. Suppose $F_i$ is compressible. Then since all nontrivial curves on an annulus are isotopic, $F_i$ must bound a thickened disk, which implies we could isotope $F_i$ through $X$ and lower the number of intersection arcs for $F$, a contradiction. 

Thus both $F_1$ and $F_2$ must be boundary parallel. Again, we know that neither can be boundary parallel to a component of the link, since $F$ must have had boundary circles that were nontrivial with respect to the inner and outer surfaces and all nontrivial circles on an annulus are isotopic. Also, we know that $F_1$ and $F_2$ must be boundary parallel to the same surface (either inner or outer). We eliminate each of the five remaining cases, shown in Figure \ref{fig: nonsingular parallel arcs}.

\begin{figure}[htbp]
    \centering
    \includegraphics[width=0.5\textwidth]{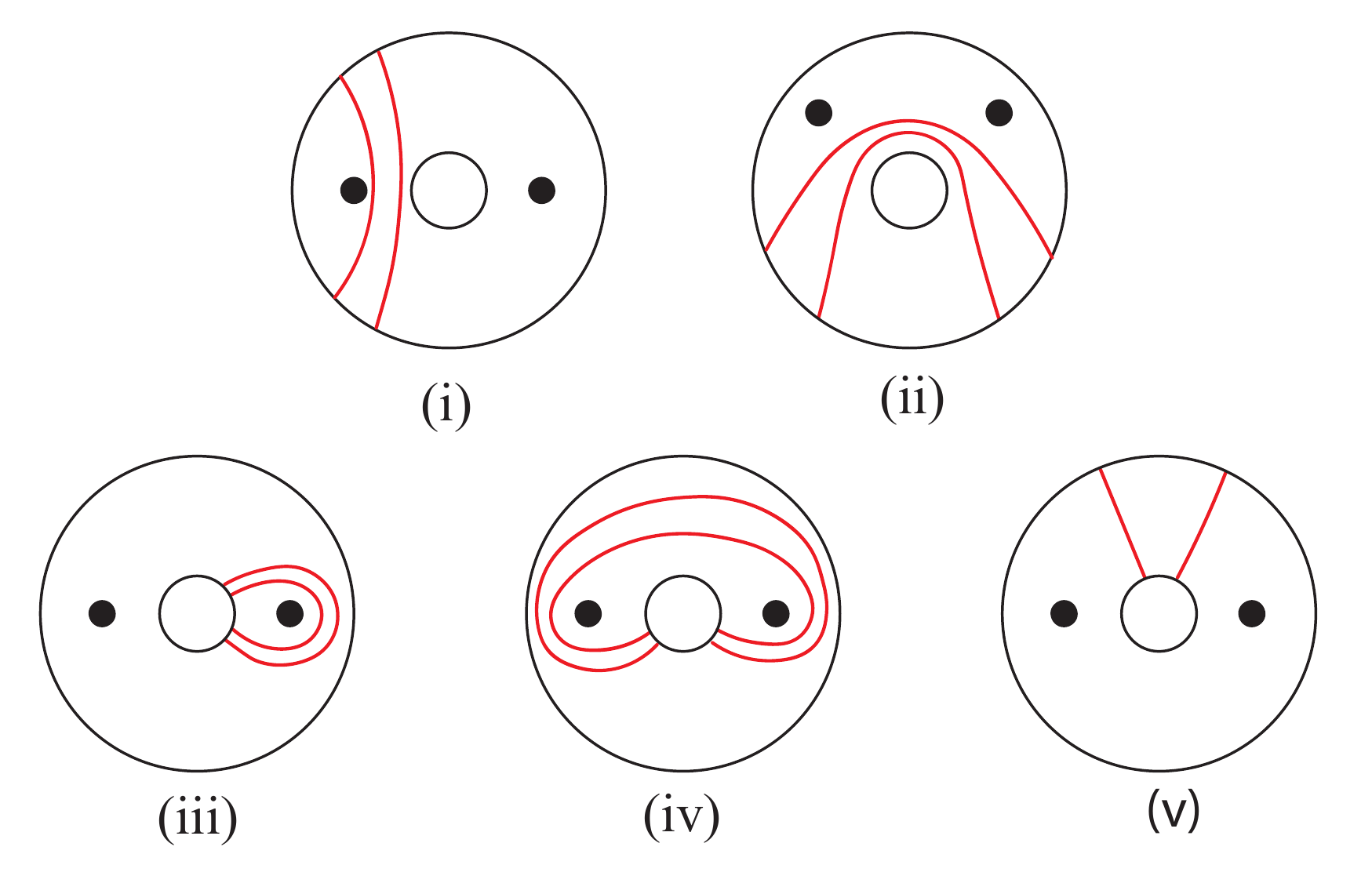}
    \caption{The five possibilities for parallel arcs in nonsingular composition.}
    \label{fig: nonsingular parallel arcs}
\end{figure}

First, suppose $F$ was an essential annulus with intersection curves isotopic to cases (i), (ii) , (iii) or (iv) of Figure \ref{fig: nonsingular parallel arcs}. Since both arcs have endpoints on the same boundary surface, we know that $F_1$ and $F_2$ must have boundary circles on that surface. Furthermore, we know that the disk $D$ bounded by $\alpha$ and $\beta$ on $X$ is a subsurface of each $F_i$, and thus $D$ must be boundary parallel to that surface as well. So both $\alpha$ and $\beta$ can be isotoped relative their boundaries into that surface. But this directly contradicts the boundary incompressibility of $X$. So case (i)-(iv) are not possible. 




In case (v), the two boundary circles of $F_i$ are on distinct boundary surfaces, so it cannot be the case $F_1$ or $F_2$ is boundary-parallel. 

\bigskip  \noindent

\textbf{Nontrivial, nonparallel intersection arcs:} Now that we have eliminated trivial arcs and parallel arcs, we consider all other possibilities for intersection arcs on $X$. In determining possibilities, recall that any intersection arc must have endpoints on different boundary components of the annulus $F$. Since any nontrivial circle on $\partial (S \times I)$ must cross $X$ an even number of times, there are an either an even number of endpoints of intersection arcs or no endpoints on each of the inner and outer boundaries of $X$. Furthermore, there must be at least two intersection arcs. Also, each pair of intersection arcs must have endpoints on the same boundary surface of $M$, so we cannot have one intersection arc with both endpoints on the inner surface and another with both endpoints on the outer surface. Therefore there are only five cases left to consider, shown in Figure \ref{fig: last nonsingular arcs cases}.

\begin{figure}[htbp]
    \centering
    \includegraphics[width=0.5\textwidth]{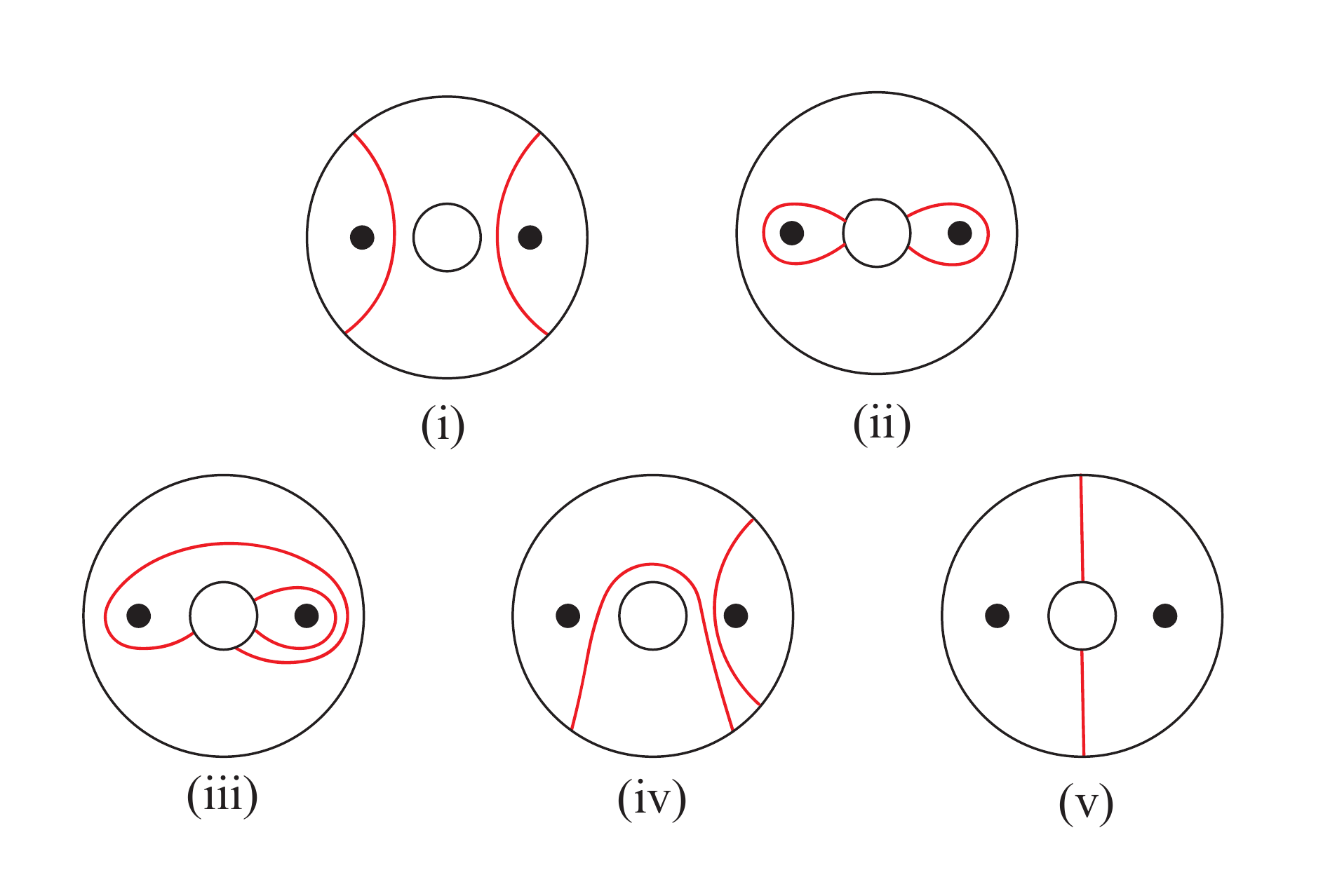}
    \caption{The remaining five possibilities for intersection arcs in nonsingular composition.}
    \label{fig: last nonsingular arcs cases}
\end{figure}

We go through and eliminate cases (i)-(v) of Figure \ref{fig: last nonsingular arcs cases} individually. Suppose that an essential annulus $F$ had intersection curves isotopic to case (i) or (ii) of Figure \ref{fig: last nonsingular arcs cases}. Call these intersection curves $\alpha$ and $\beta$. Then we consider  $M_1$, and reinsert the cork $C_1$. In the process, we cut $F$ along $\alpha$ and $\beta$ and glue these two cuts together along a disk $D$ (note that $D$ has two arcs of its boundary on the outer boundary $\S \times \{1\} $ and an arc on each of $\alpha$ and $\beta$). This yields an annulus $F'$ in $M_1$. Since $M_1$ is tg-hyperbolic, $F'$ must be boundary parallel or compressible.

Suppose $F'$ is boundary parallel. Then since both its boundary circles are nontrivial circles on one boundary of $S \times I $, $F'$ must be boundary parallel with that boundary of $S \times I$. But this is obviously impossible, because the strand of $K$ that went through the cork $C_1$ is between $F'$ and that boundary. Therefore $F'$ cannot be boundary parallel.

Then $F'$ must be compressible, with compressing disk $D$. If $D$ can be isotoped so that $D \cap X = \emptyset$, then $D$ also exists for $F$ in $M$, making $F$ not essential. Now consider $D$ such that it is impossible to isotope $D$ to get an empty intersection with $C$.
Then we compress $F$ along $D$. This yields two disks: $F_1''$ and $F_2''$. Each disk must have trivial boundary circles on $\partial (S \times I)$, which implies the annulus $F$ had trivial boundary circles on $\partial (S \times I)$. Since $F$ must have nontrivial boundary circles to be essential, it must not be essential in $M$. Therefore if $F'$ has a compressing disk in $M_1$, $F$ is not essential in $M$, a contradiction. 


Suppose that an essential annulus $F$ had intersection curves isotopic to case (iii) or (iv) of Figure \ref{fig: last nonsingular arcs cases}. We know that the two intersection curves (call them $\alpha$ and $\beta$) must together bound a disk $D$ on $F$ to one side of $X$. Importantly, $D$ must be a separating disk in  $M_1 \pmc C_1$.  But this is impossible, since $\alpha$ and $\beta$ are separated by a single puncture from $K$ on $X$, implying that a strand of $K$ goes into the region separated by $D$ but never leaves. 


Now suppose that an essential annulus $F$ had intersection curves isotopic to Case (v) of Figure \ref{fig: last nonsingular arcs cases}. Then to one side of $X$, $F$ looks like a disk $D$. For convenience, assuming the disk lies in $M_1$ (with the cork removed), we see that this disk $D$ has two arcs on the boundary of the cork $C_1$, one arc on the inner boundary of $M_1$, and one arc on the outer boundary of $M_1$. But this is exactly the definition of a singular cork (Definition \ref{def: singular cork}), a contradiction to the supposition that $C_1$ was chosen to be nonsingular. Therefore $F$ cannot exist with this set of intersection curves. 


The above arguments have eliminated all sets of just circles and all sets of just arcs in the intersection $F \cap X$. Also note that there cannot be any combinations of intersection arcs and intersection circles. This is because any circle must be nontrivial on $X$ and not parallel to a puncture from $K$, the inner surface, or the outer surface. Thus any intersection circle must enclose either two punctures or a puncture and the hole, implying there cannot be more than one nontrivial, nonparallel intersection arc. By the above arguments, this is impossible. 

Since we have eliminated all possible intersection curves for an essential sphere, annulus, or torus $F$ that minimizes the number of intersection curves on $X$, this implies that any essential $F$ in $M$ must have a conformation in which it is entirely to one side of $X$. This completes the proof.
\end{proof}

\begin{proof} [Proof of Theorem \ref{thm: hyperbolicallycomposable}] Note that if $F$ were an essential sphere, torus or annulus, then $F$ would exist to one side of $X$ by Lemma \ref{no intersections}. For convenience, suppose it is  on the side corresponding to $(S_1 \times I, K_1, C_1).$ Then the surface $F$ exists in the composition $(S_1, K_1, C_1) \#_{ns} (S_1' \times I, K_1', C_1')$. But since $(S_1 \times I, K_1, C_1) \#_{ns} (S_1' \times I, K_1', C_1')$ is tg-hyperbolic, it must either be boundary-parallel or compressible there. 

Suppose $F$ compresses in $(S_1 \times I, K_1, C_1) \#_{ns} (S_1' \times I, K_1', C_1')$ by a disk $D$. Then because $X$ is incompressible, the disk can be made not to intersect $X$. But then $D$ exists in $(S_1 \times I, K_1, C_1) \#_{ns} (S_2 \times I, K_2, C_2)$, a contradiction to $F$ being incompressible there.

Suppose that $F$ is boundary parallel in $(S_1 \times I, K_1, C_1) \#_{ns} (S_1' \times I, K_1', C_1')$. If $F$ is a torus, then because $F$ does not intersect $X$, $F$ must cut a solid torus containing one component of $K$ from $M_1$. Hence, it is also boundary parallel in $(S_1 \times I, K_1, C_1) \#_{ns} (S_2 \times I, K_2, C_2)$, a contradiction. If $F$ is an annulus with boundary in $\partial N(K)$, then together with an annulus in $\partial N(K)$, it cuts a solid torus from $M_1$, making it boundary-parallel in $(S_1 \times I, K_1, C_1) \#_{ns} (S_2 \times I, K_2, C_2)$. And if $F$ is an annulus boundary-parallel to $\partial (S \times I)$ it also must cut a solid torus from $M_1$ making it boundary-parallel in $(S_1 \times I, K_1, C_1) \#_{ns} (S_2 \times I, K_2, C_2)$, again a contradiction. Thus no essential disk annulus, torus, or sphere exists in $M$. By \cite{Thurston hyperbolization}, $M$ is tg-hyperbolic. 

 Since $K_1$ and $K_2$ are tg-hyperbolic, $S_1$ and $S_2$ must be minimal genus for those knots. Since $(S_1 \times I) \#_{ns} (S_2 \times I$) has genus $g(K_1) + g(K_2)$ and  $M$ is tg-hyperbolic and therefore minimal genus, we have that $g(K_1 \# K_2) = g(K_1) + g(K_2)$. 
    
\end{proof}

\subsection{Doubles TG-Hyperbolic Implies Composition is TG-Hyperbolic}\label{subsect: Doubles tg-hyp implies Comp tg-hyp}

In this subsection we prove the following.

\begin{theorem}\label{thm: double tg-hyp implies comp tg-hyp}Let $(S_1 \times I, K_1, C_1)$ and $(S_2 \times I, K_2, C_2)$ be two non-classical triples such that their doubles are tg-hyperbolic. Then, the composition, singular if both corks are singular and nonsingular otherwise,  is tg-hyperbolic. If both corks are singular, the genus is $g(S_1) + g(S_2) -1$ and otherwise the genus is $g(S_1) + g(S_2)$.
\end{theorem}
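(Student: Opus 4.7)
The plan is to adapt the arguments of Theorems \ref{thm: singular tg-hyperbolicity} and \ref{thm: hyperbolicallycomposable}, replacing each appeal to tg-hyperbolicity of $M_i$ with an appeal to tg-hyperbolicity of the double $D_i$ (equal to $D_s(S_i \times I, K_i, C_i)$ or $D_{ns}(S_i \times I, K_i, C_i)$ according to whether $C_i$ is singular or nonsingular). The key structural fact is that the gluing surface $X$ in $M = (S_1 \times I, K_1, C_1) \# (S_2 \times I, K_2, C_2)$ cuts $M$ into two pieces, each canonically identified with one of the two halves of the corresponding $D_i$ cut along its doubling surface $X_i$. Hence any surface lying on the $M_i$ side of $X$ in $M$ also sits inside $D_i$, and essentiality arguments that previously used $M_i$ can be transferred to $D_i$. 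The proof splits into three cases---both corks singular, neither singular, or one of each type---with the intersection analysis on each side of $X$ drawn from the appropriate preceding theorem.

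First I would show that $X$ is incompressible and boundary incompressible in $M$. A putative compressing disk $D$ for $X$ lies entirely on one side of $X$, say the $M_1$ side, and therefore embeds in $D_1$. Reflecting $D$ across $X_1$ produces an embedded sphere $D \cup D^R$ in the tg-hyperbolic manifold $D_1$; this sphere bounds a ball, and a careful analysis of how the ball meets $X_1$ (using connectedness of $X_1$) forces $\partial D$ to bound a disk on $X_1$, contradicting that $D$ was a compressing disk. The same reflection trick handles boundary compression and the $M_2$ side, and it incidentally supplies incompressibility of $X_i$ in $D_i$, which will be needed later.

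Next, suppose toward contradiction there is an essential sphere, annulus, or torus $F$ in $M$ with $|F \cap X|$ minimized, and run the intersection analyses from Lemma \ref{lemma: sing hyp, ess surface cannot intersect} (on any singular side) and Lemma \ref{no intersections} (on any nonsingular side). At each step where those proofs invoke tg-hyperbolicity of $M_i$ to conclude that a sub-surface of $F$ lying on the $M_i$ side bounds a ball or is inessential, I would instead view the sub-surface inside $D_i$ and invoke tg-hyperbolicity of $D_i$. Concretely, the sphere $F' = D \cup E$ constructed from a trivial intersection circle sits in $D_i$ and bounds a ball there; since $X_i$ is connected and disjoint from $F'$, the ball lies on the $M_i$ side and hence in $M$, enabling the isotopy that reduces $|F \cap X|$. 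Analogous translations cover parallel circles, trivial arcs, parallel arcs, and the remaining nontrivial nonparallel arc cases. In the mixed case, the once-punctured-annulus analysis applies on the singular side and the twice-punctured-annulus analysis on the nonsingular side, linked by the parity constraints along the boundary circles of $F$.

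Once $F \cap X = \emptyset$, the surface $F$ sits in the $M_i$ side of $X$, hence in $D_i$; tg-hyperbolicity of $D_i$ makes $F$ compressible or boundary parallel in $D_i$. A compressing disk in $D_i$ can be isotoped off $X_i$ by incompressibility, placing it in $M$; a boundary parallelism in $D_i$ exhibits $F$ as parallel to a component of $\partial D_i$ lying on the $M_i$ side of $X_i$, so the parallelism descends to $M$. Either outcome contradicts essentiality of $F$, and with Lemma \ref{lemma: disk generates sphere} excluding essential disks, Thurston's hyperbolization theorem yields tg-hyperbolicity of $M$. The genus claim is then automatic since tg-hyperbolic knots lie in minimal genus thickened surfaces, giving $g(K_1 \# K_2) = g(S_1) + g(S_2) - 1$ in the singular case and $g(S_1) + g(S_2)$ otherwise. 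The principal obstacle will be the initial incompressibility step: the earlier theorems leaned on tg-hyperbolicity of each $M_i$, and here the weaker hypothesis forces the reflection/doubling argument above, whose bookkeeping for the mixed singular/nonsingular composition is the most intricate piece of the proof.
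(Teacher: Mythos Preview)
Your approach is sound, but it takes a longer and more uniform route than the paper. For the two ``matching'' cases (both corks singular, or both nonsingular), the paper dispenses with intersection analysis entirely by invoking Theorem 5.5 of \cite{CFW}: the doubling surfaces $X_i$ are totally geodesic in the tg-hyperbolic doubles $D_i$ (being fixed-point sets of the reflection isometry), and cutting both doubles along these surfaces and regluing cross-wise produces two copies of the composition, which CFW then certifies as tg-hyperbolic. This handles two of the three cases in a paragraph each, with no casework on intersection curves.

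Only the mixed case (say $C_1$ nonsingular, $C_2$ singular) gets the intersection-curve treatment in the paper, and even there the paper shortens the $M_2$ side: applying CFW to the singular double $D_s(S_2 \times I, K_2, C_2)$ cut along its pair of once-punctured annuli recovers $S_2 \times I \setminus K_2$ itself, so $M_2$ is tg-hyperbolic and the arguments of Lemma~\ref{no intersections} apply verbatim on that side. The $M_1$ side is then handled essentially as you propose, by passing to $D_{ns}(S_1 \times I, K_1, C_1)$ whenever the original proof would have used tg-hyperbolicity of $M_1$. So your plan is correct and matches the paper's strategy in the hardest case, but you are missing the CFW cut-and-paste shortcut that collapses the other two cases and simplifies half of the mixed case. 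Your uniform approach buys self-containment at the cost of repeating the full intersection analysis three times; the paper's approach is shorter but imports a nontrivial external result.
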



\begin{proof}  From Theorem 5.5 of \cite{CFW}, it follows  that if we have two tg-hyperbolic manifolds $M_1$ and $M_2$, each containing a separating totally geodesic surface and the two surfaces are homeomorphic, then we can cut $M_1$ and $M_2$ open along the surfaces and glue each piece of $M_1$ to a piece of $M_2$ along copies of the surfaces to obtain two manifolds that are also tg-hyperbolic. In the case of nonsingular composition, $M_1$ and $M_2$ are $D_{ns}(S_1 \times I, K_1, C_1)$ and $D_{ns}(S_2 \times I, K_2, C_2)$ and the separating totally geodesic surfaces are the boundaries of the corks through which we reflected to obtain $D_{ns}(S_1 \times I, K_1, C_1)$ and $D_{ns}(S_2 \times I, K_2, C_2)$. Cutting and pasting along these surfaces yields two copies of the composition, which then must be tg-hyperbolic.

In the case of singular composition, $M_1$ and $M_2$ are again $D_s(S_1 \times I, K_1, C_1)$ and $D_s(S_2 \times I, K_2, C_2)$, but now each separating totally geodesic surface consists of two punctured annuli in each manifold. Theorem 5.5 of \cite{CFW} still applies, and cutting and pasting along these surfaces yields two copies of the composition, which then must be tg-hyperbolic. 

Thus, the only case that remains is the case of composition when $C_1$ is nonsingular and $C_2$ is singular. Note that then the composition is nonsingular composition.

Considering $D_s(S_2 \times I, K_2, C_2)$, we can cut along the two once-punctured totally geodesic annuli to obtain two pieces, each having two once-punctured annuli on it. Then gluing the two copies of the once-punctured annuli to each other on each piece yields two copies of the original $S_2 \times I \setminus K_2$. Hence, $S_2 \times I \setminus K_2$ must itself be tg-hyperbolic by Theorem 5.5 of \cite{CFW}.



Incompressibility and boundary incompressibility of $X$ holds as follows.  To the $M_2$ side, it follows immediately from Lemma \ref{lemma: doublingsurfaceinc} since we made no assumption that the corresponding cork was nonsingular. We just assumed the composition was nonsingular. To the $M_1$ side, they hold, as if not, the corresponding disk for the compression or boundary compression would double to an essential sphere or disk in $D_{ns}(S_1 \times I, K_1, C_1)$, contradicting its tg-hyperbolicity.

Rather than repeat a complete proof that essential surfaces with a minimum number of intersection curves with $X$ must in fact miss $X$, as we did in Lemma \ref{no intersections}, we will highlight the changes necessary in that proof caused by the fact we no longer know $(S_1 \times I, K_1, C_1)$ is tg-hyperbolic. 
(Again, throughout that proof, we did not use the fact $C_2$ was nonsingular.) Note that all of the arguments given in the proof of that lemma still apply to the $M_2$ side of $X$. So we primarily consider what happens to the $M_1$ side. 

\medskip

\noindent {\bf Trivial circles:} As in the proof of Lemma \ref{no intersections}, we create a sphere, which either bounds a ball and is therefore a contradiction to the minimal number of intersection curves of $F$ with $X$ or it does not bound a ball, which will remain true in $D_{ns}(S_1 \times I, K_1, C_1)$, contradicting its tg-hyperbolicity.

\medskip

\noindent {\bf Intersection circles that are nontrivial on both $F$ and $X$:} The argument in the proof of Lemma \ref{no intersections} goes through as is, leaving only the possibility of intersection circles of type (iv) or (v) from Figure \ref{fig: types of nontrivial int curves ns}.

\medskip

\noindent {\bf Parallel intersection circles:} If there are parallel intersection curves, they must all be of type (iv) or all of type (v). As in the proof of Lemma \ref{no intersections}, we generate a torus $F'$ that must be boundary-parallel or compressible in $M$. If $F'$ is boundary-parallel, then $F'$ must still be boundary-parallel to a component $K'$ of $K$, which is isotopic to any one of the intersection curves on $X$. If $K'$ is to the $M_1 \pmc C_1$ side of $X$, then in $D_{ns}(S_1 \times I, K_1, C_1)$ it doubles to a link, the two components of which bound an essential annulus, a contradiction to tg-hyperbolicity of $D_{ns}(S_1 \times I, K_1, C_1)$. And if it is to the $M_2 \pmc C_2$ side of $X$, the argument in the proof of Lemma \ref{no intersections} yields a contradiction.

So $F'$ must be compressible and the proof of Lemma \ref{no intersections} yields a contradiction.

\medskip

\noindent {\bf One intersection curve of type (iv) or (v):} The proof given in Lemma \ref{no intersections} goes through to eliminate this case. So the only remaining possibility is when $F$ is an annulus and the intersection curves are arcs.

\medskip

\noindent  {\bf Intersection arcs that are trivial on $X$:}  As in the proof of  Lemma \ref{no intersections}, we generate a sphere $F'$ that does not intersect $X$. If it is to the $M_1 \pmc C_1$ side, then it appears in $D_{ns}(S_1 \times I, K_1, C_1)$, which is tg-hyperbolic, so the sphere must bound a ball. The argument for when it is to the other side already appears in the proof of  Lemma \ref{no intersections}.

\medskip

\noindent {\bf Intersection curves that are trivial on F:} the argument given in the proof of  Lemma \ref{no intersections} goes through with the only change being that when the sphere $F'$ is to the $M_1 \pmc C_1$ side, the contradiction comes from considering the sphere as a subset of $D_{ns}(S_1 \times I, K_1, C_1)$ as in the previous argument.

\medskip

\noindent {\bf Parallel arcs on $X$:} The argument in Lemma \ref{no intersections} goes through as is.

\medskip
\noindent {\bf Nontrivial, nonparallel intersection arcs:} As in the proof of in Lemma \ref{no intersections}, there are five cases left to consider as in Figure \ref{fig: last nonsingular arcs cases}. In all cases, the two intersection arcs cut $F$ into two disks $F_1$ and $F_2$, with $F_1$ in $M_1 \pmc C_1$ and $F_2$ in $M_2 \pmc C_2$. We eliminate cases (i) and (ii) by considering the disk $F_2$ and applying the argument in the proof of Lemma \ref{no intersections} to $M_2$. The argument to eliminate cases (iii) and (iv) goes through as is.

The last case is case (v). But then each disk $F_i$ has boundary with two arcs on the boundary of the cork $C_i$, one arc on the inner boundary of $M_i$, and one arc on the outer boundary of $M_i$. But this is exactly the definition of a singular cork (Definition \ref{def: singular cork}), a contradiction to the supposition that $C_1$ was chosen to be nonsingular. Therefore $F$ cannot exist with this set of intersection curves.

Since we have eliminated all possible intersection curves for an essential sphere, annulus, or torus $F$ that minimizes the number of intersection curves on $X$, this implies that any essential $F$ in $M$ must have a conformation in which it is entirely to one side of $X$.
We consider the cases when $F$ is in $M_1 \pmc C_1$ and when $F$ is in $M_2 \pmc C_2$ separately.

When $F$ lies in $M_1 \pmc C_1$, then $F$ also lies in $D_{ns}(S_1 \times I, K_1, C_1)$ and by tg-hyperbolicity of $D_{ns}(S_1 \times I, K_1, C_1)$, it must compress or be boundary-parallel there.

Suppose $F$ compresses in $D_{ns}(S_1 \times I, K_1, C_1)$ by a disk $D$. Then because $X$ is incompressible, the disk can be made not to intersect $X$. But then $D$ exists in $M$, a contradiction to $F$ being incompressible there.

Suppose that $F$ is boundary parallel in $D_{ns}(S_1 \times I, K_1, C_1)$. If $F$ is a torus, then because $F$ does not intersect $X$, $F$ must cut a solid torus containing one component of $K$ from $M_1$. Hence, it is also boundary-parallel in $M$, a contradiction. If $F$ is an annulus with boundary in $\partial N(K)$, then together with an annulus in $\partial N(K)$, it cuts a solid torus from $M_1$, making it boundary-parallel in $M$. And if $F$ is an annulus boundary-parallel to $\partial (S \times I)$ it also must cut a solid torus from $M_1$ making it boundary-parallel in $M$, again a contradiction.

When $F$ lies in $M_2 \pmc C_2$, then $F$ lies in $M_2$ which is tg-hyperbolic. So it must either compress or be boundary-parallel there.

Suppose $F$ compresses in $M_2$ by a disk $D$. Then because $X$ is incompressible, the disk can be made not to intersect $X$. But then $D$ exists in $M$, a contradiction to $F$ being incompressible there.

Suppose that $F$ is boundary parallel in $M_2$. If $F$ is a torus, then because $F$ does not intersect $X$, $F$ must cut a solid torus containing one component of $K$ from $M_2 \pmc C_2$. Hence, it is also boundary-parallel in $M$, a contradiction. If $F$ is an annulus with boundary in $\partial N(K)$, then together with an annulus in $\partial N(K)$, it cuts a solid torus from $M_2 \pmc C_2$, making it boundary-parallel in $M$. And if $F$ is an annulus boundary-parallel to $\partial (S \times I)$ it also must cut a solid torus from $M_2 \pmc C_2$ making it boundary-parallel in $M$, again a contradiction. Thus no essential disks, annuli, tori, or spheres exist in $M$, and by \cite{Thurston hyperbolization} $M$ is tg-hyperbolic.
The conclusion about the genus of the composition is immediate from the fact that the resulting manifold is tg-hyperbolic and therefore cannot possess any reducting annuli.

\end{proof}

\subsection{Finding Corks to Obtain TG-Hyperbolicity of Composition}

\begin{theorem}\label{thm: composition tg-hyperbolicity}
Given two non-classical virtual knots or links that are tg-hyperbolic,  there is a choice of corks so that the composition, singular if both corks are singular and nonsingular otherwise, is tg-hyperbolic. In the case of singular composition, the resulting genus is $g(K_1) + g(K_2) -1$ and otherwise it is $g(K_1) + g(K_2)$.
\end{theorem}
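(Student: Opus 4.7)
The plan is to reduce to the three theorems already proved and split on the existence of singular corks. First suppose both triples admit singular corks $C_1, C_2$. Then Theorem \ref{thm: singular tg-hyperbolicity} directly gives that $(S_1\times I, K_1, C_1)\#_s (S_2\times I, K_2, C_2)$ is tg-hyperbolic, with genus $g(K_1)+g(K_2)-1$. So I can henceforth assume at least one of $K_1$, $K_2$ does not admit a singular cork; in that case I will produce nonsingular corks for \emph{both} knots whose nonsingular cork doubles are tg-hyperbolic, and apply Theorem \ref{thm: double tg-hyp implies comp tg-hyp} to conclude that the resulting nonsingular composition is tg-hyperbolic with genus $g(K_1)+g(K_2)$.

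The construction of the required nonsingular cork on a tg-hyperbolic triple $(S\times I, K, C)$ is geometric. Fix a maximal embedded horocusp neighborhood $N(K)$ of $K$ in the hyperbolic metric on $M = (S\times I)\setminus K$. Let $\gamma_1$ be a shortest geodesic arc from $S\times\{0\}$ to $\partial N(K)$, and let $\gamma_2$ be a shortest geodesic arc from the endpoint of $\gamma_1$ on $\partial N(K)$ to $S\times\{1\}$, running along a geodesic meridian $m$ between the two endpoints. Define $C'$ to be a small regular neighborhood of $\gamma_1\cup m\cup \gamma_2$; this is a disk times $I$ meeting $K$ in a single unknotted arc, hence a valid cork. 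I will call this a 2-geodesic cork (in the genus-one case the outer boundary is replaced by a horospherical argument, but the construction is otherwise identical).

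Two claims then need verification. First, $C'$ is nonsingular: a witnessing disk $E$ for singularity as in Definition \ref{def: singular cork} would, together with an arc on $\partial C'$, assemble with $C'$ into a once-punctured essential annulus in $M$, contradicting tg-hyperbolicity of $M$. Second, the cork double $D_{ns}(S\times I, K, C')$ is tg-hyperbolic. For this I run the intersection-curve analysis of Lemma \ref{no intersections} on the doubling surface $X=\partial C'\cap (S\times I)$ (incompressibility and boundary incompressibility of $X$ in the double follows essentially verbatim from Lemma \ref{lemma: doublingsurfaceinc}, since $M$ itself is tg-hyperbolic). Every intersection case is eliminated exactly as in that lemma, leaving only the possibility of an essential surface $F$ disjoint from $X$, i.e.\ sitting inside one copy of $M\pmc C'$, and hence inside $M$ after refilling the cork.

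Such an $F$ must compress or be boundary parallel in $M$, but not in the double; the only way this can happen is if $F$ is an essential torus or annulus which compresses \emph{through} the cork, bounding a solid torus or knot exterior in $M$ which avoids $K$ but must meet $C'$. Lifting to $\mathbb{H}^3$, $C'$ lifts to a tubular neighborhood of a configuration of geodesics joining horoballs (lifts of $K$) to the totally geodesic planes lifting $S\times\{0,1\}$. Inside the lifted ball containing the knot exterior, these geodesic arcs would have to be locally knotted in order to avoid the lifts of the horoballs; but geodesics in $\mathbb{H}^3$ are not locally knotted, a contradiction. The hard part is precisely this last geometric step — verifying that every non-intersecting essential surface is ruled out by a geodesic-knotting argument — since all the surface-combinatorics have been pushed through verbatim from the previous subsections. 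With both nonsingular corks in hand having tg-hyperbolic doubles, Theorem \ref{thm: double tg-hyp implies comp tg-hyp} completes the case and yields genus $g(K_1)+g(K_2)$, as desired.
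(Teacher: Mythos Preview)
Your overall strategy matches the paper's: reduce to Theorems \ref{thm: singular tg-hyperbolicity} and \ref{thm: double tg-hyp implies comp tg-hyp}, construct a 2-geodesic cork via shortest geodesics to the cusp, rerun Lemmas \ref{lemma: doublingsurfaceinc} and \ref{no intersections} on the doubling surface, and handle the remaining disjoint essential surface by a geometric argument in $\mathbb{H}^3$. However, there is a genuine gap in your nonsingularity claim, and a smaller issue in the final geometric step.

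\textbf{The nonsingularity gap.} Your argument that the 2-geodesic cork $C'$ is always nonsingular is incorrect. A singular disk $E$ together with $C'$ produces a once-\emph{punctured} annulus in $S\times I$, i.e.\ an annulus meeting $K$ in a single point. In $M=(S\times I)\setminus K$ this is a thrice-boundaried planar surface of Euler characteristic $-1$, not an essential annulus, and its existence does \emph{not} contradict tg-hyperbolicity; the virtual trefoil (Figure \ref{fig: virt tref singular curve}) is tg-hyperbolic and has exactly such a singular curve. So for a knot which \emph{does} admit singular corks, your 2-geodesic cork may well be singular, and then the nonsingular double $D_{ns}(S\times I,K,C')$ contains the obvious reducing annulus and is not tg-hyperbolic. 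The paper avoids this by a sharper case split: it only builds the 2-geodesic cork for a knot that admits \emph{no} singular cork whatsoever, in which case every cork---including $C'$---is trivially nonsingular. For the other knot (if it does admit a singular cork) one simply uses that singular cork; its singular double is tg-hyperbolic by Theorem \ref{thm: singular tg-hyperbolicity}, and Theorem \ref{thm: double tg-hyp implies comp tg-hyp} handles the mixed case. Your proof becomes correct once you adopt this split instead of insisting on nonsingular corks for both.

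\textbf{The geometric step.} Your ``geodesics cannot be locally knotted'' argument is the right intuition but needs more care: it is not immediately clear that a geodesic arc passing through the lifted ball must be locally knotted in order to coexist with the embedded knot exterior there. The paper's version is cleaner: the complement of the lifted corks inside a single lifted ball $B'$ is a handlebody, hence has free fundamental group, while the knot exterior $Q'\subset B'$ would force a non-free subgroup via Nielsen--Schreier. This replaces a delicate knotting assertion with a purely algebraic contradiction.
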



\begin{proof}
If both knots have a singular cork, we are done by Theorem \ref{thm: singular tg-hyperbolicity}. If either knot does not have a singular cork, 
we describe how to obtain the requisite cork. Let $M =S \times I \setminus K$ which we are taking to be tg-hyperbolic. Let $N(K) \setminus K$ be a choice of embedded cusp corresponding to a knot $K$. Let $\gamma_1$ be the shortest geodesic that runs from $S \times \{0\}$ to $\partial N(K)$ in the hyperbolic  metric on $S \times I \setminus K$. Then $\gamma_1$ ends on a geodesic meridian $m$ on $\partial N(K)$. Choose $\gamma_2$ to be the shortest geodesic that runs from $m$ to $S \times \{1\}$. Let the cork $C'$ be a regular neighborhood of $\gamma_1 \cup \gamma_2 \cup m$. We call such a cork a {\it 2-geodesic cork}. 

Note that this cork can be isotoped so that it does appear as $D \times I$ in $S \times I$ with $D \times I$ intersecting the knot in an unknotted arc, so the definition is compatible with the definition of a cork above. We prove that the double corresponding to a 2-geodesic cork is tg-hyperbolic and then the result follows from Theorem \ref{thm: double tg-hyp implies comp tg-hyp}. 

Since  we are assuming the knot does not have a singular cork, $C'$ must be nonsingular. By Lemmas \ref{lemma: disk generates sphere}, \ref{lemma: doublingsurfaceinc} and \ref{no intersections},  we need only eliminate essential spheres, annuli and tori that do not intersect the doubling surface $X$. 

Suppose there exists an essential surface $F$ in the double that does not intersect $X$. Then $F$ lives entirely in one copy of $M \pmc C'$. Therefore we can reinsert $C'$ to obtain a copy of $M$ that also completely contains $F$. Since $M$ is tg-hyperbolic, $F$ cannot be essential in $M$. So if $F$ is a sphere, it bounds a ball, and since $C'$ cannot intersect the ball, that ball exists in $D(K,C')$ as well, contradicting $F$'s essentiality there. Thus $F$ must be a torus or annulus and it must be compressible or boundary parallel in $M$.

If $F$ is compressible, this means there exists a compressing disk $D$ for $F$ contained in $M$. We know that $F \cap C' = \emptyset$, $D$ is a disk, and $C' \cap K$ is an unknotted arc. If $D$ can be isotoped so that $D \cap X = \emptyset$, then $D$ also exists for $F$ in $D(K, C')$, contradicting the fact $F$ is essential. Now consider $D$ such that it is impossible to isotope $D$ to get an empty intersection with $C'$. 

First, suppose $F$ is a torus. Then compressing along $D$ yields a sphere $F'$ in $M$. If $F'$ does not bound a ball, then it is an essential sphere in the tg-hyperbolic manifold $M$, a contradiction. Therefore $F'$ bounds a ball in $M$, which implies $F$ must have bounded a solid torus or a knot exterior in $M$. Since $F$ separates the manifold and neither a solid torus nor a knot exterior contain boundary surfaces, it must be that $F$ bounds a solid torus or knot exterior to the side that is away from the boundaries of $S \times I$. In particular, since the cork touches the boundaries of $S \times I$, $F$ bounds a solid torus or knot exterior in $D(K,C')$. But it cannot bound a solid torus, since then it would compress in $D(K,C')$. 

     So $F$ bounds a knot exterior $Q$ in both $D(K, C')$ and $M$. It must compress in $S \times I$, and after compression it bounds a ball $B$. So there is a ball $B$ in $M$ that contains the knot exterior. The ball avoids the knot in $M$ but must intersect the cork. We lift $M$ to hyperbolic 3-space $\mathbb{H}^3$. The ball $B$ lifts to copies of the ball, and the knot lifts to a collection of horoballs. The cork lifts to belts around the horoballs, corresponding to the neighborhood of the meridian,  together with neighborhoods of geodesics connecting horoballs to hyperbolic planes corresponding to the boundaries of $S \times I$.  See Figure \ref{fig:corklift}. (In the case $S$ is a torus, the geodesic planes are replaced by horospheres.) But because the ball $B$ intersects the cork, the lifted balls must intersect the lifts of the corks. Choosing one copy of a lifted ball $B'$, it contains a copy $Q'$ of the knot exterior $Q$. Let $P$ be the complement of the lifted corks in $B'$. This is a handlebody with free fundamental group. But then the knot exterior $Q'$ must have fundamental group appearing as a subgroup of the fundamental group of $P$. The Nielsen-Schreier Subgroup Theorem implies this subgroup must also be free, but the fundamental group of a nontrivial knot exterior is never free, a contradiction.


     \begin{figure}[htbp]
    \centering
    \includegraphics[width=0.7\textwidth]{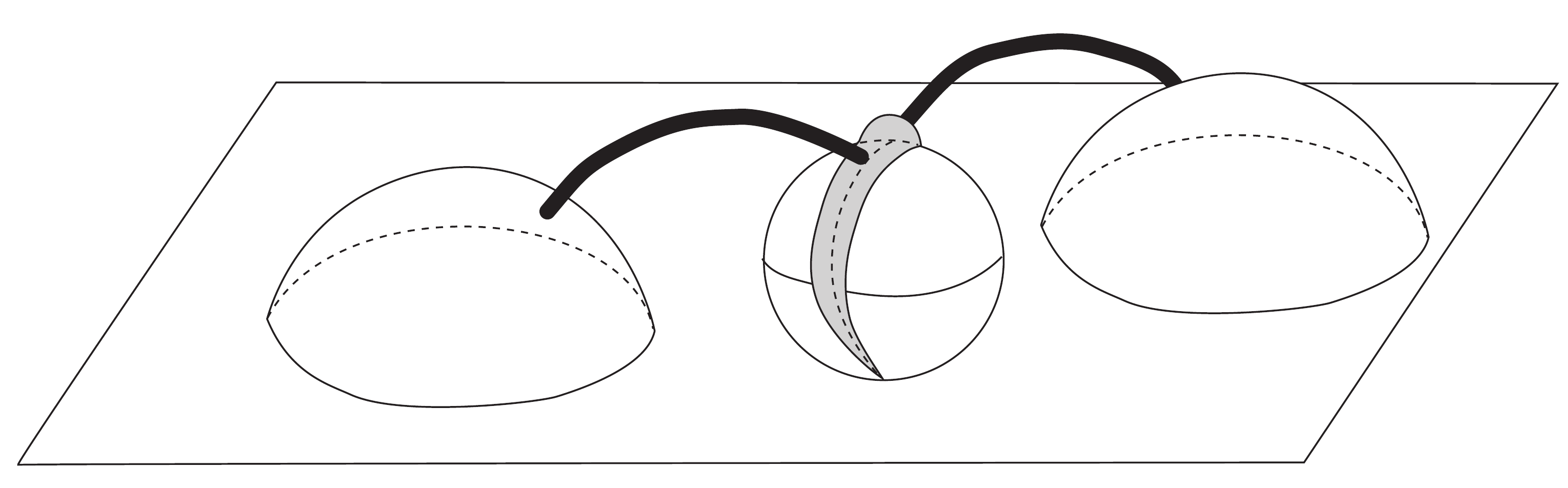}
    \caption{A lift of a 2-geodesic cork to $\mathbb{H}^3$.}
    \label{fig:corklift}
\end{figure}



This means $F$ must be boundary parallel in $M$. Since $F \cap X$ is empty, $F$ cannot be boundary parallel to either the inner or outer surface. Also, $F$ cannot be boundary parallel to the knot component that passes through $C'$ since $C'$ touches both the inner and outer surface. So $F$ must be a torus boundary parallel to a component of $K$ in $M \pmc C'$. Since $M \pmc C'$ is not changed in the doubling procedure, $F$ will still be boundary parallel and therefore inessential in $D(K, C')$.

In the case $F$ is a compressible annulus, compression yields two disks in $M$, which must have trivial boundaries since $M$ is tg-hyperbolic, contradicting the essentiality of the annulus in $D(K,C').$ So the annulus must be boundary-parallel. But if the annulus is parallel to one of the  boundaries of $M$, then it still is in  $M \pmc C'$, a contradiction. And it it is parallel into the boundary of a neighborhood of a knot component, then it still is in $M \pmc C'$, again a contradiction.

The conclusion about the  genus of the composition is immediate from the fact the resulting manifold is tg-hyperbolic and therefore cannot possess any reducing annuli.





\end{proof}

\section{Volume Bounds}\label{section: volume bounds}

This section consists of three subsections based on the type of cork chosen for each knot or link. This is because singular and nonsingular cork choices will generate different volume bounds. 

\subsection{Nonsingular Compose Nonsingular}
In this subsection we  prove the following theorem:
\begin{theorem} \label{thm: nonsing compose nonsing vol bound} 
Given two non-classical hyperbolically composable triples $(S_1 \times I, K_1, C_1)$ and $(S_2 \times I, K_2, C_2)$, the volume of the composition $M = (S_1 \times I, K_1, C_1) \#_{ns} (S_2 \times I, K_2, C_2)$ is bounded below by the nonsingular composite volumes of $(S_1 \times I, K_1, C_1)$ and $(S_2 \times I, K_2, C_2)$:
$$vol (M) \geq vol_{ns}(S_1 \times I, K_1, C_1) + vol_{ns}(S_2 \times I, K_2, C_2)  $$
\end{theorem}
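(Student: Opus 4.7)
The plan is to express $M$ as the union along the twice-punctured annulus $X = \partial C_1 \cap (S_1 \times I) = \partial C_2 \cap (S_2 \times I)$ of the pieces $M_i' = M_i \pmc C_i$, and then invoke the Agol-Storm-Thurston volume inequality of \cite{AST}, as generalized in \cite{CFW}, for the essential surface $X$ in $M$.

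First I would collect the needed structural facts. By Theorem \ref{thm: hyperbolicallycomposable}, the composition $M$ is tg-hyperbolic, and by Lemma \ref{lemma: doublingsurfaceinc}, $X$ is incompressible and boundary incompressible in $M$; since $X$ is clearly not boundary parallel, $X$ is essential. Because the reflection of a hyperbolically composable triple is again hyperbolically composable, Theorem \ref{thm: hyperbolicallycomposable} applied to each $(S_i \times I, K_i, C_i)$ together with its reflection shows that $D_{ns}(S_i \times I, K_i, C_i)$ is tg-hyperbolic. Mostow-Prasad rigidity applied to this double, together with the reflection symmetry exchanging its two halves, implies that the fixed surface $X$ can be taken totally geodesic, so $M_i'$ itself inherits a hyperbolic structure with totally geodesic boundary $X$, and
\[
vol(M_i') = \tfrac{1}{2} vol(D_{ns}(S_i \times I, K_i, C_i)) = vol_{ns}(S_i \times I, K_i, C_i).
\]

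The key step is to invoke the AST/CFW inequality in the following form: when a tg-hyperbolic $3$-manifold is cut along an essential surface whose cut-open pieces admit their own tg-hyperbolic structures with totally geodesic boundary along the cut, the sum of the volumes of those pieces is bounded above by the volume of the ambient manifold. Applied to $M$ cut along $X$ into $M_1'$ and $M_2'$, this yields
\[
vol(M) \geq vol(M_1') + vol(M_2') = vol_{ns}(S_1 \times I, K_1, C_1) + vol_{ns}(S_2 \times I, K_2, C_2),
\]
which is precisely the desired bound. The main technical obstacle is the careful invocation of AST/CFW in the tg-hyperbolic setting with higher-genus totally geodesic boundary $\partial(S_i \times I)$ included and with $X$ being a surface with cusped boundary (from the punctures corresponding to $K$); however, once the essentiality of $X$ in $M$ and the tg-hyperbolicity of the two doubles are in hand, the argument reduces to a direct application of the cited inequality.
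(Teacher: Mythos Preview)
Your proposal is correct and follows essentially the same approach as the paper: both arguments cut $M$ along the essential twice-punctured annulus $X$ and apply the Agol--Storm--Thurston inequality (as generalized in \cite{CFW}), using that each piece $M_i'$ has volume $\tfrac{1}{2}vol(D_{ns}(S_i \times I, K_i, C_i))$ because $X$ is totally geodesic in the double. The paper phrases this symmetrically by taking two disjoint copies of $M$ and cutting-and-pasting along both copies of $X$ to obtain $D_{ns}(S_1 \times I, K_1, C_1) \sqcup D_{ns}(S_2 \times I, K_2, C_2)$ directly, but this is only a cosmetic repackaging of your argument.
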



\begin{proof} 
We begin with each $D(K_i, C_i)$. In each, we know that $\partial C_i$ is totally geodesic since it is the fixed point set of an orientation reversing isometry of the double. We can then cut along each $\partial C_i$ and glue each copy of $(S_1 \times I, K_1, C_1)$ to a copy of $(S_2 \times I, K_2, C_2)$, resulting in two copies of $M = (S_1 \times I, K_1, C_1) \#_{ns} (S_2 \times I, K_2, C_2)$. Applying \cite{AST} Theorem 7.1 as generalized in \cite{CFW} Theorem 5.5 to the (disjoint) union of these two manifolds and reversing the cut-and-paste process just described, and using the fact we defined $vol_{ns}(S_i \times I, K_i, C_i) = \frac{1}{2} vol(D_{ns}(S_i \times I, K_i, C_i))$, we see that 
\begin{align*}
    2 vol(M) & \geq vol(D_{ns}(S_1 \times I, K_1, C_1)) + vol(D_{ns}(S_2 \times I, K_2, C_2)) \\
    & \geq 2 vol_{ns}(S_1 \times I, K_1, C_1) + 2 vol_{ns}(S_2 \times I, K_2, C_2)
\end{align*} 
 Dividing by 2 yields the claim. 
\end{proof}

\subsection{Singular Compose Singular}
When both corks in a composition are singular, then the volume bounds follow the same idea as when both corks are nonsingular. Recalling that the singular composite volume is simply one half the volume of the singular cork double of a knot or link $K$, we have the following theorem: 
\bigskip 
\begin{theorem} \label{thm: sing compose sing vol bound}
 Let $(S_1 \times I, K_1, C_1)$ and $(S_2 \times I, K_2, C_2)$ be two tg-hyperbolic triples such that both $C_1$ and $C_2$ are singular. If both have genus one or both have genus at least two, then the volume of the composition $M = (S_1 \times I, K_1, C_1) \#_s (S_2 \times I, K_2, C_2) $ is bounded below by the singular composite volumes of $(S_1 \times I, K_1, C_1)$ and $(S_2 \times I, K_2, C_2)$:
$$vol (M) \geq vol_s(S_1 \times I, K_1, C_1) + vol_s(S_2 \times I, K_2, C_2)  $$
\end{theorem}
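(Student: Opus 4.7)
The plan is to mirror the argument used for Theorem \ref{thm: nonsing compose nonsing vol bound}, with the cork boundary surfaces $\partial C_i$ replaced by the once-punctured annuli that implement singular composition. First I would start from the two singular cork doubles $D_s(S_i \times I, K_i, C_i)$; each is tg-hyperbolic by Theorem \ref{thm: singular tg-hyperbolicity} applied to a triple composed with its reflection, so each carries a complete finite-volume hyperbolic metric. Let $X_i \subset D_s(S_i \times I, K_i, C_i)$ denote the reflection surface, i.e., the union of the two once-punctured annuli along which $(S_i \times I, K_i, C_i)$ is glued to its mirror to form the singular cork double. Because $X_i$ is the fixed-point set of the orientation-reversing isometric involution realizing the double, it is totally geodesic, and the work in Subsection \ref{subsect: sing comp tg-hyperbolicity} shows it is essential.

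Next I would cut each double along $X_i$ and cross-glue: attach a piece of $D_s(S_1 \times I, K_1, C_1)$ to a piece of $D_s(S_2 \times I, K_2, C_2)$ along the matching once-punctured annuli. The cutting produces two isometric pieces from each double, and the cross-gluing on both sides produces exactly two isometric copies of $M = (S_1 \times I, K_1, C_1) \#_s (S_2 \times I, K_2, C_2)$. Applying Theorem 7.1 of \cite{AST} as generalized by Theorem 5.5 of \cite{CFW} to the disjoint union $D_s(S_1 \times I, K_1, C_1) \sqcup D_s(S_2 \times I, K_2, C_2)$, whose totally geodesic cutting surfaces are precisely $X_1 \sqcup X_2$, says that this cut-and-paste can only decrease total volume, yielding
\[
2\, vol(M) \;\geq\; vol(D_s(S_1 \times I, K_1, C_1)) + vol(D_s(S_2 \times I, K_2, C_2)).
\]
Dividing by $2$ and using the definition $vol_s(S_i \times I, K_i, C_i) = \tfrac{1}{2} vol(D_s(S_i \times I, K_i, C_i))$ gives the claimed inequality.

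The main obstacle, and the reason for the genus hypothesis, is ensuring that the two reflection surfaces $X_1$ and $X_2$ match as totally geodesic essential surfaces with compatible end behavior so that the cross-gluing really yields the singular composition and the AST/CFW hypotheses really apply. When $g(S_i)=1$ the tg-hyperbolic manifold has the boundary tori $S_i \times \{0,1\}$ stripped, so each once-punctured annulus of $X_i$ extends out to toroidal cusps rather than meeting a totally geodesic boundary; when $g(S_i) \geq 2$ the surfaces $S_i \times \{0,1\}$ are included as totally geodesic boundary components and $X_i$ meets them in geodesic boundary arcs. In either of the two regimes in the theorem statement both factors sit in the same regime and the cross-gluing is boundary compatible, but in the mixed case $(g(S_1),g(S_2))=(1, \geq 2)$ the two surfaces $X_i$ end differently on the two sides, so the cut-and-paste cannot be performed directly — this is exactly the incompatibility flagged in the introduction that forces the exclusion of singular composition of a genus one triple with a genus at least two triple from the statement.
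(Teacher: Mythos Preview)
Your proposal is correct and follows essentially the same approach as the paper: start from the singular cork doubles (tg-hyperbolic by Theorem \ref{thm: singular tg-hyperbolicity}, with the reflection once-punctured annuli totally geodesic as fixed-point sets), cut along these and cross-glue to obtain two copies of $M$, then apply \cite{AST} Theorem 7.1 / \cite{CFW} Theorem 5.5 and divide by $2$; your explanation of the genus hypothesis via boundary compatibility also matches the paper's remark following the proof. One small wording slip: the cut-and-paste from the doubles to the two copies of $M$ cannot \emph{decrease} total volume (that is the content of the AST/CFW inequality), so the phrase ``can only decrease total volume'' is backwards even though the displayed inequality you wrote is the correct one.
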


\begin{proof}
Since the corks are both singular, we know that the manifolds obtained by performing singular composition with a second reflected copy of $(S_i \times I, K_i, C_i)$ are tg-hyperbolic by Theorem \ref{thm: singular tg-hyperbolicity}, and the gluing surfaces are totally geodesic. We then cut along each $\gamma_i \times I$ and glue each copy of $(S_1 \times I, K_1, C_1)$ to a copy of $(S_2 \times I, K_2, C_2)$ along the new boundaries $\partial (\gamma_i \times I)$, resulting in two copies of $M = (S_1 \times I, K_1, C_1) \#_s (S_2 \times I, K_2, C_2)$. Again, applying \cite{AST} Theorem 7.1 as generalized in \cite{CFW} Theorem 5.5 to the (disjoint) union of these two manifolds, we see that 
\begin{align*}
    2 vol (M) & \geq vol(D_s(S_1 \times I, K_1, C_1)) + vol(D_s(S_2 \times I, K_2, C_2)) \\
    & \geq 2 vol_s(S_1 \times I, K_1, C_1) + 2 vol_s(S_2 \times I, K_2, C_2) 
\end{align*} 
since we defined $vol_s(S_i \times I, K_i, C_i) = \frac{1}{2} vol(D_s(S_i \times I, K_i, C_i))$. Dividing by 2 completes the proof.
\end{proof}

Note that we require both knots to have genus one or both knots to have genus greater than 1 because of issues with the boundary of manifolds. The manifold defined by a knot in a genus one thickened surface does not include its boundary, while a manifold defined by a removing a knot from a genus $g \geq 2$ thickened surface does include its boundary. In order to apply Theorem 7.1 from \cite{AST} (or Theorem 5.5 of \cite{CFW}),  either both pieces in the composition must include the boundary, or both must exclude the boundary. 


It is also worth noting that there is a particularly nice special case for singular composition. 
    \begin{corollary} \label{cor: genus 1 sing comp}
        Given two tg-hyperbolic triples $(S_1 \times I, K_1, C_1)$ and $(S_2 \times I, K_2, C_2)$ such that both $C_1$ and $C_2$ are singular and both $S_1$ and $S_2$ have genus $g = 1$,  the volume of the composition \\
        $M = (S_1 \times I, K_1, C_1) \#_s (S_2 \times I, K_2, C_2)$ is equal to the sum of the hyperbolic volumes of $K_1$ and $K_2$:
        $$vol (M) = vol(K_1) + vol(K_2) $$
    \end{corollary}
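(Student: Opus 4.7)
The plan is to show that the inequality from Theorem \ref{thm: sing compose sing vol bound} sharpens to an equality in the genus one case. The strategy is to construct an explicit complete finite-volume hyperbolic structure on $M$ of volume $vol(K_1) + vol(K_2)$ and then invoke Mostow-Prasad rigidity. The crucial feature of the genus one setting is that each component of the gluing surface is topologically a thrice-punctured sphere (a once-punctured annulus), and thrice-punctured spheres admit a unique complete hyperbolic structure of finite area.

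First, for each $i$ I would analyze the singular cork double $D_s(T^2 \times I, K_i, C_i)$. By Theorem \ref{thm: singular tg-hyperbolicity} this double is tg-hyperbolic, and it carries a tautological orientation-reversing $\mathbf{Z}/2$ symmetry swapping the two copies of $(T^2 \times I, K_i, C_i)$. By Mostow-Prasad rigidity this symmetry is realized by an isometric involution whose fixed point set is precisely the pair of once-punctured annuli $A_i^+, A_i^-$ arising from cutting $T^2 \times I$ along $\gamma_i \times I$. Being the fixed point set of an isometric involution, each of these surfaces is totally geodesic, and cutting along them recovers two isometric copies of $M_i = (T^2 \times (0,1)) \setminus K_i$. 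This yields $vol(D_s(T^2 \times I, K_i, C_i)) = 2\, vol(K_i)$, hence $vol_s(T^2 \times I, K_i, C_i) = vol(K_i)$, and crucially exhibits $A_i^+$ and $A_i^-$ as totally geodesic thrice-punctured spheres in $M_i$ itself.

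Next, I would construct a hyperbolic metric on $M$ directly by cutting and pasting. The singular composition is obtained by cutting each $M_i$ along $A_i$ and identifying $A_1^+$ with $A_2^+$ and $A_1^-$ with $A_2^-$ (or the swapped choice described in the paper). Since each of the four surfaces $A_i^\pm$ is a totally geodesic thrice-punctured sphere in its respective $M_i$, and a thrice-punctured sphere admits a unique complete hyperbolic structure of finite area, both identifications are automatically by isometries (matching knot-cusp to knot-cusp and pairing the two torus-cusp boundaries correspondingly). The horocyclic cusp neighborhoods on either side of each gluing surface assemble into genuine cusps of $M$, so the resulting metric is complete and has $vol(M) = vol(M_1) + vol(M_2) = vol(K_1) + vol(K_2)$. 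Because $M$ is tg-hyperbolic by Theorem \ref{thm: singular tg-hyperbolicity}, Mostow-Prasad rigidity forces this constructed metric to coincide with the unique tg-hyperbolic metric on $M$, giving the desired equality.

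The delicate point is verifying that the cut-and-paste metric is genuinely smooth across the two gluing surfaces and complete at the cusps, rather than carrying a bending defect or developing an incomplete end along the identification. The uniqueness of the hyperbolic structure on a thrice-punctured sphere, together with the rigidity of the normal hyperbolic geometry of a totally geodesic surface in a hyperbolic 3-manifold, is what forces the two sides to agree along tubular and cusp neighborhoods of each gluing surface, and this is precisely what turns the $\geq$ of Theorem \ref{thm: sing compose sing vol bound} into an equality here. For higher-genus gluing surfaces the moduli space of hyperbolic structures is non-trivial and the two sides need not agree, which is why the corollary is restricted to genus one.
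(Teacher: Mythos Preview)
Your proof is correct and relies on the same key observation as the paper: in the genus one case the once-punctured annulus $\gamma_i \times (0,1)$ is a thrice-punctured sphere, and cutting and pasting hyperbolic manifolds along totally geodesic thrice-punctured spheres preserves volume exactly.

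The difference is one of packaging. The paper simply invokes \cite{Adams thrice punctured spheres} for the fact that any properly embedded incompressible thrice-punctured sphere in a finite-volume hyperbolic $3$-manifold is automatically totally geodesic, and that cutting and regluing along such surfaces preserves volume (equivalently, the equality case of Theorem 5.5 of \cite{CFW}). You instead deduce total geodesicity of $A_i^\pm$ in $M_i$ by passing through the singular cork double and using the reflection symmetry plus Mostow--Prasad, and then carry out the metric cut-and-paste by hand. Your route is more self-contained and makes explicit why the argument fails in higher genus (moduli), but the detour through the double is not needed: Adams' theorem gives total geodesicity of the thrice-punctured sphere directly in each $M_i$, without ever forming $D_s$. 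Both approaches ultimately rest on the rigidity of the thrice-punctured sphere, so the mathematical content is the same.
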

    
\begin{proof}

When the genus is one, the knots or links live in thickened tori, not including their boundaries. This means that the cutting surface $\gamma \times (0,1)$ is  a thrice-punctured sphere, which  is totally geodesic in each $M_i$ by \cite{Adams thrice punctured spheres}. As in that paper, cutting and pasting along thrice-punctured spheres preserves hyperbolic volume of the pieces. 

Note that this also follows from the special case of Theorem 5.5  of \cite{CFW} when all the surfaces are totally geodesic before and after cutting and pasting. 

Therefore, 
$$vol ((S_1 \times I, K_1, C_1) \#_s (S_2 \times I, K_2, C_2)) = vol(K_1) + vol(K_2) $$
\end{proof}

\subsection{Nonsingular Compose Singular}
To tackle this case, we make use of \cite{AST} and properties of the Gromov norm $\gnorm{M}$ found in \cite{Gromov}. Specifically, for a tg-hyperbolic manifold $M$, the volume $vol(M) = \frac{1}{2}v_3 \gnorm{DM}$ where $v_3$ is the volume of an ideal regular tetrahedron and DM is the double of the manifold over its totally geodesic boundary. Also, removing a manifold of codimension one does not change the Gromov norm. We will also use Theorem 9.1 of \cite{AST}, reproduced here: 

\begin{theorem*} \label{AST thm 9.1}
Let $\overline{N}$ be a compact manifold with interior $N$, a hyperbolic 3-manifold of finite volume. Let $\Sigma$ be an embedded incompressible ($\pi_1$-injective) surface in $N$. Then
$$ vol(N) \geq \frac{1}{2} v_3 \gnorm{D(N \pmc \Sigma)}$$
\end{theorem*}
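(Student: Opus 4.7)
The plan is to reduce the inequality to a purely simplicial-volume statement about the closed (cusped) double of $N$, and then invoke the fact that cutting a 3-manifold along an incompressible surface does not increase its Gromov norm.

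First, I would pass to the double $DN$. Since $\overline N$ is (after removing toroidal cusps and capping spherical ones) hyperbolic with totally geodesic boundary, $DN$ is itself a complete finite-volume hyperbolic 3-manifold, and $vol(DN) = 2\, vol(N)$. By Thurston's theorem identifying hyperbolic volume with $v_3$ times Gromov's simplicial volume, which holds in the cusped finite-volume setting as well, one has $vol(DN) = v_3\, \gnorm{DN}$, so
\ba
vol(N) = \tfrac12\, v_3\, \gnorm{DN}.
\ea
The incompressible surface $\Sigma \subset N$, together with its mirror image under the doubling reflection, assembles to a properly embedded surface $D\Sigma \subset DN$. Because $\Sigma \hookrightarrow N$ is $\pi_1$-injective and the reflection is an isometry, $D\Sigma$ is incompressible in $DN$, and cutting $DN$ open along $D\Sigma$ reproduces exactly $D(N \pmc \Sigma)$. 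Thus the theorem reduces to the Gromov-norm inequality
\ba
\gnorm{DN} \;\geq\; \gnorm{D(N \pmc \Sigma)}.
\ea

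Finally, I would establish this Gromov-norm inequality via Gromov's general principle that cutting open along a $\pi_1$-injective surface does not increase simplicial volume. Given $\epsilon > 0$, take a singular fundamental cycle $z$ for $DN$ with $\|z\|_1 \leq \gnorm{DN} + \epsilon$; straighten $z$ and put it in general position with respect to $D\Sigma$. Subdividing each simplex along its intersection with $D\Sigma$ and grouping the pieces by which component of $DN \pmc D\Sigma$ they lie in produces a relative chain on $(DN \pmc D\Sigma,\, \partial)$ whose doubling across the new boundary is a fundamental cycle for $D(N \pmc \Sigma)$, of $\ell^1$-norm at most $\|z\|_1$ up to a correction that vanishes as $\epsilon \to 0$. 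Letting $\epsilon \to 0$ yields the desired inequality, and combining with the displayed identity for $vol(N)$ completes the proof.

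The main obstacle is controlling the new relative-boundary contribution in the cutting step. This is where the incompressibility of $\Sigma$ (equivalently, of $D\Sigma$ in $DN$) is essential: via the bounded-cohomology characterization of $\gnorm{\cdot}$ and the injection $\pi_1(D\Sigma) \hookrightarrow \pi_1(DN)$, one can straighten cycles and arrange that the boundary contributions on $\partial(DN \pmc D\Sigma)$ have arbitrarily small $\ell^1$-mass in the limit, so they do not affect the Gromov norm.
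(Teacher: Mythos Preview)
This statement is not proved in the paper at all: it is Theorem~9.1 of Agol--Storm--Thurston \cite{AST}, quoted verbatim and used as a black box. So there is no ``paper's own proof'' to compare against, but your sketch nonetheless contains a real gap.

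There is first a small misreading. The hypothesis is that $N$ is a finite-volume hyperbolic $3$-manifold, so $\partial\overline N$ consists only of tori; there is no totally geodesic boundary to double over, and your ``$DN$'' is just $N$ (or two disjoint copies of it). With that correction your first displayed identity collapses to the standard $vol(N)=v_3\gnorm{N}$, and the theorem becomes the purely norm-theoretic statement $\gnorm{N}\geq\tfrac12\gnorm{D(N\pmc\Sigma)}$.

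That inequality is the entire content of the Agol--Storm--Thurston result, and your subdivision argument does not establish it. When a straight singular simplex is put in general position with respect to $\Sigma$ and cut along the intersection, a simplex meeting $\Sigma$ in $k$ pieces yields on the order of $k$ simplices after retriangulation, so the $\ell^1$-norm \emph{increases}; it is not bounded by $\|z\|_1$ plus a term that vanishes. The bounded-cohomology mechanism you appeal to in the last paragraph kills boundary contributions only when $\pi_1(\Sigma)$ is amenable --- this is exactly why Gromov's additivity holds across the tori of the JSJ decomposition but was \emph{open} for higher-genus incompressible surfaces before \cite{AST}. The actual proof in \cite{AST} is of an entirely different character: one equips $D(N\pmc\Sigma)$ with the singular $C^0$ metric obtained by reflecting the hyperbolic metric of $N$ across $\Sigma$, runs Perelman's Ricci flow with surgery starting from (a smoothing of) this metric, and uses Perelman's monotonicity to show that the eventual hyperbolic volume of the pieces is at most the initial volume $2\,vol(N)$.
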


We will ultimately prove the following volume bound: 

\begin{theorem}\label{thm: nonsing compose sing vol bound} 
    Let $(S_1 \times I, K_1, C_1)$ and $(S_2 \times I, K_2, C_2)$ be two non-classical tg-hyperbolic triples such that $C_1$ is singular and $(S_2 \times I, K_2, C_2)$ is hyperbolically composable. Then the volume of the composition $M = (S_1 \times I, K_1, C_1) \#_{ns} (S_2 \times I, K_2, C_2)$ is bounded below: 
    $$vol(M) \geq \frac{1}{4}vol(D(D_{ns}(S_1 \times I, K_1, C_1)) \pmc T) + \frac{1}{2}vol(D_{ns}(S_2 \times I, K_2, C_2))$$
\end{theorem}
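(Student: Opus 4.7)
By Theorem \ref{thm: composition tg-hyperbolicity} the composition $M$ is tg-hyperbolic, and the argument of Lemma \ref{lemma: doublingsurfaceinc}---which uses only the tg-hyperbolicity of $M_1$ and $M_2$, not the nonsingularity of both corks---shows the gluing surface $X = \partial C_1 \cap (S \times I) = \partial C_2 \cap (S \times I)$ is incompressible and boundary-incompressible in $M$. Applying Theorem 9.1 of \cite{AST} as generalized in \cite{CFW} to $M$ with $\Sigma = X$ gives
$$vol(M) \;\geq\; \tfrac{v_3}{2}\,\|D(M \pmc X)\|,$$
where the doubling is over the new $X$-boundary and $\|\cdot\|$ is the (relative) Gromov norm. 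Since $M \pmc X = (M_1 \pmc C_1) \sqcup (M_2 \pmc C_2)$ and doubling each piece across its $X_i$-boundary recovers the (extended) notion of nonsingular cork double,
$$D(M \pmc X) \;=\; D_{ns}(S_1 \times I, K_1, C_1) \sqcup D_{ns}(S_2 \times I, K_2, C_2),$$
and Gromov norm is additive over disjoint unions.

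For the second piece, hyperbolic composability of $(S_2 \times I, K_2, C_2)$ means $D_{ns}(S_2 \times I, K_2, C_2)$ is tg-hyperbolic, so Gromov--Thurston gives $v_3 \|D_{ns}(S_2 \times I, K_2, C_2)\| = vol(D_{ns}(S_2 \times I, K_2, C_2))$, contributing the summand $\tfrac{1}{2}\,vol(D_{ns}(S_2 \times I, K_2, C_2))$. For the first piece, the disk $E$ witnessing singularity of $C_1$ (Definition \ref{def: singular cork}) together with its reflection across $X_1$ glues along the two $\alpha_i$-arcs to an essential annulus $A$ in $D_{ns}(S_1 \times I, K_1, C_1)$ whose boundary circles lie on the two doubled high-genus totally geodesic surface boundaries. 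Consequently $D_{ns}(S_1 \times I, K_1, C_1)$ is not tg-hyperbolic; but doubling it again over the high-genus boundary sends $A$ to an essential torus $T$ in $D(D_{ns}(S_1 \times I, K_1, C_1))$. Combining the identity $\|N, \partial N\| = \tfrac{1}{2}\|DN\|$, Gromov's invariance of simplicial volume under cutting along incompressible tori, and Gromov--Thurston for the resulting cusped hyperbolic manifold, we compute
$$v_3\,\|D_{ns}(S_1 \times I, K_1, C_1)\| \;=\; \tfrac{v_3}{2}\,\|D(D_{ns}(S_1 \times I, K_1, C_1)) \pmc T\| \;=\; \tfrac{1}{2}\,vol\bigl(D(D_{ns}(S_1 \times I, K_1, C_1)) \pmc T\bigr),$$
whence multiplying by $\tfrac{1}{2}$ yields the first summand of the claimed bound.

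The hard part will be verifying that $D(D_{ns}(S_1 \times I, K_1, C_1)) \pmc T$ is actually tg-hyperbolic, so the Gromov--Thurston equality in the last display is legitimate. This amounts to showing that $T$ (equivalently, the annulus $A$) captures the only essential-surface obstruction produced by the singular cork $C_1$; the verification will proceed by an essential-surface analysis patterned on Lemma \ref{no intersections} and the proof of Theorem \ref{thm: hyperbolicallycomposable}, noting that after cutting along $T$ the two halves each locally resemble a nonsingular composition whose defining triples are tg-hyperbolic, for which essential disks, spheres, annuli, and tori have already been ruled out. A secondary technical point is the careful application of the CFW generalization of Theorem 9.1 of \cite{AST} in the presence of higher-genus totally geodesic boundary on $M$ and on $\partial X$, but this is addressed directly by Theorem 5.5 of \cite{CFW}.
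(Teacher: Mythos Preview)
Your argument is essentially the paper's, with one reordering: the paper first doubles $M$ over its totally geodesic $S\times\{0,1\}$ boundary to obtain a cusped finite-volume manifold $N=DM$, and only then applies Theorem~9.1 of \cite{AST} to $N$ with $\Sigma$ the doubled cork boundary; everything else (additivity of Gromov norm, invariance under removing $T$, Gromov--Thurston for $D(D_{ns}(S_1\times I,K_1,C_1))\pmc T$) proceeds exactly as you have it, and dividing by $2$ at the end recovers the bound. The advantage of doubling first is that it sidesteps your ``secondary technical point'' entirely: AST Theorem~9.1 as stated needs a finite-volume hyperbolic interior, and Theorem~5.5 of \cite{CFW} is the cut-and-paste volume inequality rather than a relative-boundary version of AST~9.1, so your direct application to $M$ is not quite justified by the reference you cite. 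Two smaller corrections: the tg-hyperbolicity of $M$ here comes from Theorem~\ref{thm: double tg-hyp implies comp tg-hyp} (using that $D_s(S_1\times I,K_1,C_1)$ is tg-hyperbolic by Theorem~\ref{thm: singular tg-hyperbolicity} and $D_{ns}(S_2\times I,K_2,C_2)$ is tg-hyperbolic via Theorem~\ref{thm: hyperbolicallycomposable}), not from Theorem~\ref{thm: composition tg-hyperbolicity}, which is only an existence statement about corks; and the paper does prove the hyperbolicity of $D(D_{ns}(S_1\times I,K_1,C_1))\pmc T$ as a separate claim, by an essential-surface analysis very much along the lines you sketch.
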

\noindent Before proving the volume bounds for the composition, we describe the manifold  $D(D_{ns}(S_1 \times I, K_1, C_1)) \pmc T$ and prove that it is tg-hyperbolic.

Given a tg-hyperbolic triple $(S \times I, K, C)$ such that $C$ is singular, first double $M \pmc C$ over $\partial C$. Call this manifold $M'$. The disk in $M \pmc C$ given by singularity doubles to an annulus. Now double $M'$ over its inner and outer surface boundaries to obtain $DM'$. This manifold is now a quadruple of $M \pmc C$. The annulus  doubles  to an essential torus $T$ in $DM'$. The last step is to remove a neighborhood of this torus resulting in $DM' \pmc T$, which because of the removal of $T$, has two more cusp ends than did $DM'$. Ignoring the knot momentarily, the manifold that results is the product of a circle with a closed orientable surface of genus $g_1 + g_2 -1$ with the interior of two disks removed. Call this manifold $Q$. Then $DM' \pmc T$ is obtained from $Q$ by removing link components.

See Figure \ref{fig: constructing DM' minus T} for an example of how to construct $DM' \pmc T$. Also note that $D(D_{ns}(S_1 \times I, K_1, C_1)) \pmc T = DM' \pmc T$. We use the former definition in the theorem statement in order to highlight the dependence of this object on $K_1$, but from now on we will use $DM' \pmc T$ for readability.

\begin{figure}[htbp]
    \centering
    \includegraphics[width=0.4\textwidth]{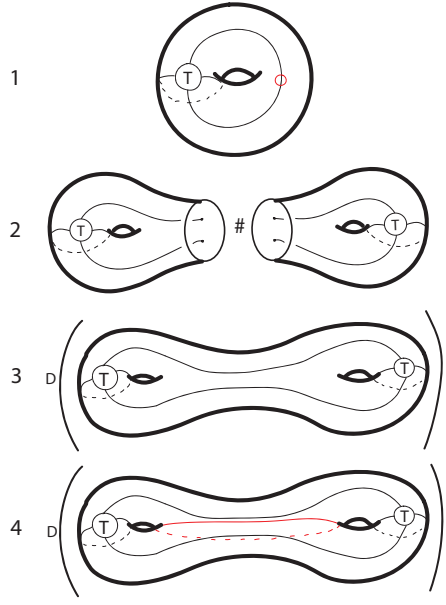}
    \caption{Constructing $DM' \pmc T$ when $K$ has genus 1. The inner boundary surface is not depicted. Here, the $D$ refers to doubling the picture over the inner and outer surface, and in step 4 the torus defined by the curve in red is removed. Although this curve defines an annulus in $M'$, doubling over the inner and outer boundaries generates the torus $T$.}
    \label{fig: constructing DM' minus T}
\end{figure}
\begin{claim*}
$DM' \pmc T$ is hyperbolic.
\end{claim*}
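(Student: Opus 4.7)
The plan is to derive hyperbolicity of $DM' \pmc T$ from tg-hyperbolicity of the singular cork double $D_s := D_s(S_1 \times I, K_1, C_1)$ (provided by Theorem \ref{thm: singular tg-hyperbolicity}), via the cut-and-paste theorem for totally geodesic surfaces (Theorem 5.5 of \cite{CFW}).

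First, apply Theorem \ref{thm: singular tg-hyperbolicity} to the singular composition of $(S_1 \times I, K_1, C_1)$ with its reflected mirror image (both corks singular) to conclude that $D_s$ is tg-hyperbolic. When $g_1 \geq 2$, its inner and outer boundary surfaces (of genus $2g_1 - 1$) are totally geodesic by definition of tg-hyperbolicity; when $g_1 = 1$ they appear as cusps. Doubling $D_s$ over these inner/outer boundaries and applying Theorem 5.5 of \cite{CFW} yields a finite-volume hyperbolic manifold $D(D_s)$.

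Second, identify $DM' \pmc T$ topologically with $D(D_s)$, possibly modulo drilling of a pair of curves. The key observation is that both manifolds admit a natural decomposition into four copies of the fundamental piece $M_1 \pmc A$, where $A = C_1 \cup E$ is the once-punctured annulus determined by the singular cork $C_1$. In $D(D_s)$, the four pieces come from two copies of $D_s$ (each built from two copies of $M_1 \pmc A$ glued along $A$) then glued along inner/outer surfaces. In $DM' \pmc T$, the four copies of $M_1 \pmc C$ arising from the iterated doubling each lose their singularity disk $E$ upon removing a neighborhood of $T$, producing four copies of $M_1 \pmc A$ glued in a matching pattern. Any discrepancy, such as the two additional torus boundaries in $DM' \pmc T$ coming from the removal of $T$, corresponds to drilling specific curves out of $D(D_s)$ (concretely, the core curves of the two solid tori introduced in the cap-off step of singular composition).

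The main obstacle is verifying this topological identification in detail. Tracking how cutting and doubling commute -- in particular, confirming that removing $T$ from $DM'$ splits each of the four copies of $M_1 \pmc C$ along its singularity disk, and that the resulting gluings match those in $D(D_s)$ -- is delicate bookkeeping. Translating the cut-and-cap operation in the definition of singular composition into the language of drilling/Dehn filling is also needed to reconcile the discrepancy in torus boundaries. Once the identification is established, hyperbolicity of $DM' \pmc T$ follows from that of $D(D_s)$, either directly (if they are homeomorphic) or via an additional check that drilling the relevant curves preserves hyperbolicity, e.g.\ by a direct application of Thurston's hyperbolization to the drilled manifold after ruling out essential tori along the lines of the intersection-curve analyses in Lemmas \ref{lemma: sing hyp, ess surface cannot intersect} and \ref{no intersections}.
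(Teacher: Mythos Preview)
Your route via the singular double $D_s$ is a genuinely different idea from the paper's, but as written it has two real problems.

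\textbf{The genus-one case is actually false as stated.} When $g_1 = 1$, the singular double $D_s$ lives in a thickened torus, so its inner and outer boundaries are torus cusps. Theorem~5.5 of \cite{CFW} does not apply to gluing along cusps, and in fact the topological double $D(D_s)$ is a link complement in $T^3 = T^2 \times S^1$. The torus $T^2 \times \{0\}$ (the doubling surface) avoids the link, is $\pi_1$-injective, and separates $D(D_s)$ into two copies of the hyperbolic manifold $D_s$; hence it is an essential torus and $D(D_s)$ is \emph{not} hyperbolic. So your step~2 fails outright for $g_1=1$, while the paper's construction still makes sense there because $M'$ has genus $2g_1 = 2$ and one doubles over honest genus-two surfaces.

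\textbf{The drilling step is not a reduction.} Even for $g_1 \geq 2$, where $D(D_s)$ really is hyperbolic, you correctly note that $DM' \pmc T$ is obtained from $D(D_s)$ by drilling the cores of the two cap-off solid tori. But these two cores are parallel curves of the form $\{p_i\} \times S^1$ in $\Sigma_{2g_1-1} \times S^1$, and drilling two parallel curves is exactly the situation in which new essential annuli can appear between the fresh torus boundaries. Ruling those out is not automatic; your proposed fix (``rule out essential tori along the lines of Lemmas~\ref{lemma: sing hyp, ess surface cannot intersect} and~\ref{no intersections}'') is precisely the work the paper does directly. The paper's argument is in fact organized around this: it reuses the nonsingular-composition analysis (Lemma~\ref{no intersections}) on $M'$, observes that the single case that fails (case~(v) of Figure~\ref{fig: last nonsingular arcs cases}) is exactly the annulus $A_T$ whose double is the removed torus $T$, and then shows any such annulus is isotopic to $A_T$. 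Your detour through $D(D_s)$ does not bypass this step.
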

\begin{proof}

Like our earlier tg-hyperbolicity proofs, we eliminate the possibilities of essential disks, annuli, tori, and spheres. 


Let $T'$ and $T''$ be the two copies of $T$ in the boundary of $DM' \pmc T$ that result from the removal of $T$.
Suppose an essential disk $D$ exists in $DM' \pmc T$. If it does not touch the link components, then it exists in $Q$,  a contradiction to the fact the product of  a circle with a  non-disk surface with boundary has incompressible boundary. If $D$ has boundary in the boundary of a regular neighborhood of the link components, then it generates an essential sphere, which we will shortly eliminate. 



We want to consider just one copy of $M'$ in order to leverage our earlier proofs. In order to do this, we need to show that if $F$ is essential in $DM' \pmc T$, then $F$ also generates at least one essential surface in $M' \pmc A_T$, where $A_T$ is the essential annulus in $M'$ that doubles to $T$ in $DM'$. If $F$ lives in only one copy of $M'$, then the fact it is essential in the manifold $DM' \pmc T$ implies it is also essential in the submanifold $M' \pmc A_T$.

So suppose $F$ intersects the doubling boundary of $M'$. We assume $F$ intersects $\partial M'$ in a minimal number of intersection curves.  Then by incompressibility of $\partial M'$, we can assume the only simple closed curves are nontrivial on both $\partial M'$ and on $F$. This immediately eliminates essential spheres. If $F$ is an essential annulus or torus, then all intersections must be parallel nontrivial curves on $F$, implying that all components of $F \cap M'$ are annuli for either copy of $M'$. The annuli must be essential in each copy of $M'$ or we could lower the number of intersection curves of $F$ with $\partial M'$. Thus, the existence of $F$ implies there is an essential annulus or torus $F'$ in $M'$. 


Since $M'$ is created through nonsingular composition, all of our arguments to prove nonsingular composition is hyperbolic go through, except for where we leveraged the nonsingularity of the cork, which occured when we eliminated the case appearing in Figure \ref{fig: last nonsingular arcs cases} (v). This means that in $M'$, any essential annuli or tori must have intersection curves with $X$ isotopic to the intersection curves $A_T$ has with $X$. Thus, there are no essential tori intersecting $X$.



Now suppose $F'$ is an annulus such that $F' \cap X$ has intersection curves $\alpha_1$ and $\alpha_2$ that are isotopic to the intersection curves in $A_T \cap X$. The arcs $\alpha_1$ and $\alpha_2$ cut $F'$ into two disks $F_1'$ and $F_2'$.  The intersections of $A_T$ with $X$ also cut $A_T$ into two disks $A_T^1$ and $A_T^2$. We will show that $F'$ must be isotopic to $A_T$ by showing that $F_1'$ is isotopic to $A_T^1$ in $M_1$ and $F_2'$ is isotopic to $A_T^2$ in $M_2$, where $M_1$ and $M_2$ are the two copies of $M$ utilized to create the composition.  
Note that $F' \cap A_T = \emptyset$, since $F' \subseteq F$, and $F$ lives in $DM' \pmc T$. Both $F'$ and $A_T$ have intersection arcs on $X$ that have a single endpoint on each boundary circle of $X$. This means that when reversing the composition and reinserting copies of $C$ to obtain $M_1$ and $M_2$, we can form an annulus by taking $F_1'$, $A_T^1$ and two disks on $\partial C$ in $M_1$, and a second annulus by taking $F_2'$, $A_T^2$ and two disks on $\partial C$ in $M_2$. In each case, because $M_i$ is tg-hyperbolic and hence minimal genus, the annulus must bound a $D_i \times I$ to one side, where $D_i$ is a disk. Hence we can isotope $F_i'$ to $A_T^i$ through $D_i \times I$. But doing both, we then have that $A_T$ and $F'$ are isotopic in $M'$. This implies that $F$ must be isotopic to $T$ and thus $F$ is boundary parallel in $DM' \pmc T$ and therefore was not essential, a contradiction. 




Now suppose $F$ is an essential annulus in $DM' \pmc T$ with boundary circles on the boundary created by removing $T$. Obviously, the boundary circles $\alpha_1$ and $\alpha_2$ of $F$ must be nontrivial on $T$. Thus $\alpha_1$ and $\alpha_2$ bound an annulus $A$ on $T$. Then consider the torus $F'$ obtained by taking the union $A \cup F$ in $DM'$. By the above arguments, we know that the only essential surface (up to isotopy) in $DM'$ is $T$. Therefore $F'$ is not essential in $DM'$, and so must be boundary parallel or compressible. 

$F'$ cannot be boundary parallel, as there is no boundary in $DM'$. Thus $F'$ must be compressible, so it has a compressing disk $D$. Since $T$ is essential, it is also incompressible. Therefore $D$ cannot have boundary on $A$, and so must also be a compressing disk for $F$ in $DM' \pmc T$. This means $F$ is not essential in $DM' \pmc T$. Since there are no essential disks, spheres, annuli, or tori, $DM' \pmc T$ is hyperbolic by Thurston \cite{Thurston hyperbolization}.
\end{proof}

Now we are ready to bound the volume of a composition, proving Theorem \ref{thm: nonsing compose sing vol bound}.

\begin{proof}[Proof of Theorem \ref{thm: nonsing compose sing vol bound}.]

Start with our two tg-hyperbolic triples $(S_1 \times I, K_1, C_1)$ and $(S_2 \times I, K_2, C_2)$ such that $C_1$ is singular and $C_2$ is nonsingular and hyperbolically composable. Compose these two and double over the outer and inner boundaries to get $N = D((S_1 \times I, K_1, C_1) \#_{ns} (S_2 \times I, K_2, C_2))$. This is a double of the object whose volume we want to bound; so set $V = \frac{1}{2} vol(D((S_1 \times I, K_1, C_1) \#_{ns} (S_2 \times I, K_2, C_2)))$. We also know that this object is tg-hyperbolic from Theorem \ref{thm: hyperbolicallycomposable}.

Next, let $\Sigma$ be the double of the cork boundary in this object. Removing a neighborhood of $\Sigma$ yields a copy of $M_1 \pmc C_1$ doubled over the inner and outer boundary of $M_1$ and a copy of $M_2 \pmc C_2$ doubled over its inner and outer boundary. 
We can therefore apply Theorem 9.1 of \cite{AST} to get 
$$vol(N) > \frac{1}{2}v_3 \gnorm{D(D_{ns}(S_1 \times I, K_1, C_1)) \cup D(D_{ns}(S_2 \times I, K_2, C_2))}$$
where the second doubling of $D_{ns}(S_i \times I, K_i, C_i)$ is over $\partial C_i$. Note there is not equality because $\Sigma$ is not totally geodesic in $N$. Using properties of Gromov norm explained above, we have 
\begin{align*}
    vol(N) & > \frac{1}{2}v_3 \gnorm{D(D_{ns}(S_1 \times I, K_1, C_1)) \cup D(D_{ns}(S_2 \times I, K_2, C_2))}\\
    & \geq \frac{1}{2}v_3 \gnorm{D(D_{ns}(S_1 \times I, K_1, C_1))} + \frac{1}{2}v_3 \gnorm{D(D_{ns}(S_2 \times I, K_2, C_2))}  \\
    & \geq \frac{1}{2}v_3 \gnorm{D(D_{ns}(S_1 \times I, K_1, C_1)) \pmc T} + \frac{1}{2}v_3 \gnorm{D(D_{ns}(S_2 \times I, K_2, C_2))}  \\
    & = \frac{1}{2}vol(D(D_{ns}(S_1 \times I, K_1, C_1)) \pmc T) + \frac{1}{2}vol(D(D_{ns}(S_2 \times I, K_2, C_2))\\
    & = \frac{1}{2}vol(D(D_{ns}(S_1 \times I, K_1, C_1)) \pmc T) + vol(D_{ns}(S_2 \times I, K_2, C_2))
\end{align*}

Dividing by 2 yields the desired inequality. 
\end{proof}

Now we have a volume bound for $(S_1 \times I, K_1, C_1) \#_{ns} (S_2 \times I, K_2, C_2)$ in terms of volumes of other tg-hyperbolic manifolds whose volumes can be computed in SnapPy.

\section{Arbitrarily High Volumes and Alernating Virtual Links}\label{sect: infinite vol limit}

Before proving Theorem \ref{thm: vol limits to infinity}, we need a few definitions.

\begin{definition}
A \emph{twist region} in a projection is a maximal sequence of end-to-end bigons. If no bigons touch a crossing, then that crossing by itself is also a twist region.
\end{definition}

\begin{definition}
Let $K$ be a knot living in a thickened surface $S \times I$. Call the projection surface $P = S \times \{\frac{1}{2}\}$, and let $G$ be the 4-valent graph with crossing information at vertices obtained by projecting $K$ onto $P$. Then we can \emph{augment} $K$ by adding a single trivial component around a single crossing of $K$, where the component is perpendicular to $P$ and punctures $P$ exactly twice in two distinct non-adjacent regions of $G$ that touch each other just at the crossing. This new component is called an \emph{augmenting component}. We say that $K$ has been \emph{augmented} if at least one augmenting component has been added. Augmenting a twist region means that we add one augmenting component around any one of the crossings in the twist region such that it does not intersect any of the bigons.
\end{definition}

\begin{theorem} \label{thm: vol limits to infinity} 
Given two tg-hyperbolic alternating virtual knots or links $K_1, K_2$ with genera $g_1, g_2 > 0$, there exists a sequence of tg-hyperbolic compositions $W_i = (S_1 \times I, K_1, C_1^i) \#_{ns} (S_2 \times I, K_2, C_2)$ such that $$\lim_{i \to \infty}(vol(W_i)) = \infty.$$
\end{theorem}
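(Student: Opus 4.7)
The plan is to construct a sequence of corks $C_1^i$ on $K_1$ whose nonsingular composite volumes $vol_{ns}(S_1 \times I, K_1, C_1^i)$ tend to infinity, and then invoke Theorem \ref{thm: nonsing compose nonsing vol bound} to conclude
\[
vol(W_i) \;\geq\; vol_{ns}(S_1 \times I, K_1, C_1^i) + vol_{ns}(S_2 \times I, K_2, C_2) \;\longrightarrow\; \infty.
\]
The cork $C_2$ is fixed once and for all as an alternating cork on a weakly prime alternating projection of $K_2$; by Theorem \ref{thm: alternatingcork} this is hyperbolically composable, so the second summand contributes a fixed finite positive constant that plays no role in the limit.

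To build the corks $C_1^i$, I would start with a reduced weakly prime alternating projection $P$ of $K_1$ on $S_1$ and exploit the assumption $g_1 > 0$ to choose a nontrivial simple closed curve $\gamma$ on $S_1$. I would then isotope a short subarc of $K_1$ so that it wraps $i$ times around $\gamma$, producing a new projection $P_i$ of the same virtual knot $K_1$ whose number of twist regions grows linearly in $i$, and arranged so that $P_i$ remains reduced, alternating, and weakly prime on $S_1$. Let $C_1^i$ be any alternating cork placed on an arc inside the new wrapping region. By Theorem \ref{thm: alternatingcork} each triple $(S_1 \times I, K_1, C_1^i)$ is hyperbolically composable, and so each composition $W_i$ is tg-hyperbolic by Theorem \ref{thm: hyperbolicallycomposable}; in particular each double $D_{ns}(S_1 \times I, K_1, C_1^i)$ is tg-hyperbolic and has finite well-defined volume.

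To see that $vol(D_{ns}(S_1 \times I, K_1, C_1^i)) \to \infty$, I would observe that the cork double carries a natural diagram obtained by reflecting $P_i$ across $\partial C_1^i$; this doubled diagram is alternating on a closed orientable surface of genus $2g_1$ and has twist number at least twice that of $P_i$. Applying existing lower bounds on the hyperbolic volume of alternating links in thickened surfaces in terms of twist number, which generalize Lackenby's classical twist-number bound to the thickened-surface setting, the volume of $D_{ns}(S_1 \times I, K_1, C_1^i)$ grows at least linearly with the twist number of $P_i$ and therefore with $i$. The main obstacle, and the technical heart of the argument, is the careful construction of the sequence $P_i$: one must add twist regions while simultaneously preserving alternation, reducedness, and weak primeness both for $P_i$ and for its reflection across $\partial C_1^i$, since an uncontrolled Reidemeister-II insertion of crossings immediately destroys alternation. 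The positive genus of $S_1$ is essential here, as it is what enables the wrapping construction in the first place and what allows the twist-number volume bound to apply to the cork double.
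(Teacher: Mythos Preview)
Your plan has a genuine and fatal gap at exactly the point you flag as ``the technical heart'': the construction of reduced, weakly prime, alternating projections $P_i$ of the \emph{same} knot $K_1$ with twist number growing without bound is impossible. For alternating links in thickened surfaces, a reduced alternating diagram realizes the minimal crossing number (this is the surface analogue of the Kauffman--Murasugi--Thistlethwaite theorem, established by Boden--Karimi and others), so every reduced alternating diagram of $K_1$ on $S_1$ has the same fixed crossing number $c$. Since the twist number is bounded above by the crossing number, the twist number of any such $P_i$ is at most $c$, and your Lackenby-type bound on $vol(D_{ns}(S_1 \times I, K_1, C_1^i))$ is uniformly bounded, not tending to infinity. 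Wrapping an arc around a handle either produces a diagram that is no longer alternating, or---if you somehow arrange the crossings to alternate---a diagram of a different knot.

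The paper's proof avoids this obstruction by abandoning alternation of the intermediate diagrams entirely. It inserts crossings via Type~II Reidemeister moves (which destroy alternation, as you note), then \emph{augments} each new twist region with an unknotted circle. The resulting augmented link $L_i'''$ in the composed thickened surface has $2i$ extra cusps, and the elementary bound $vol \geq n\,v_0$ on $n$-cusped manifolds forces its volume to infinity. Finally, high Dehn surgery on the augmenting circles recovers the composition $W_i$ with volume arbitrarily close to that of $L_i'''$. Hyperbolicity along the way is supplied not by alternation of $P_i$ but by the augmented-alternating theorem of \cite{Augmented alternating}. Thus the paper trades your twist-number volume bound for a cusp-count volume bound, and trades the impossible requirement that $P_i$ stay alternating for the machinery of augmentation and hyperbolic Dehn surgery.
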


Note that a virtual link is alternating if either the classical crossings alternate when we ignore the virtual crossings in a projection of the knot, or equivalently, the knot is realized by a knot in a thickened surface with projection that alternates on the surface. Since  the link is virtual, it will be realized with a cellular alternating link projection  on the surface. The link is  weakly  prime (sometimes called obviously prime) if the Gauss code has no alternating classical subcode (see \cite{tg hyperbolicity in S cross I})  or equivalently, if on the resultant projection on the surface, there are no disks containing crossings with boundary circle that  intersect the projection transversely twice. In the case that the virtual link is alternating and weakly prime,  it  will be tg-hyperbolic by  Theorem 1 of \cite{small18}.
\begin{proof}

We will prove this theorem by showing that we can create a sequence of projections $P_1, P_2, \dots$ of $K_1$ on $S_1$ such that we can choose a cork $C_1^i$ for $P_i$  and a cork $C_2$ for $K_2$ such that $(S_1 \times I, K_1, C_1^i) \#_{ns} (S_2 \times I, K_2, C_2)$ is tg-hyperbolic with volume increasing without bound.   Letting $i \xrightarrow[]{} \infty$ will yield  compositions with volume tending to $\infty$. 

Given a projection of $K_1$, see Figure \ref{fig:increasing twist number} for how to create twist regions with arbitrarily many crossings, and see Figure \ref{fig:increasing twist regions} for how to increase the number of twist regions. Note that both of these processes are done by repeating Type II Reidemeister moves. The red circle in each figure shows the choice of cork. Combining these two processes yields  a projection $P_i$ of $K_1$ with $2i$ many twist regions, each containing $2j-1$ crossings, as shown in Figure \ref{fig:increasing twist number and regions}. Because the new crossings occur in a disk in the projection plane that does not involve any virtual crossings, these same crossings and the same local diagram will appear on the surface $S_1$ when we lift the projection to the surface.

\begin{figure}[htbp]
    \centering
    \includegraphics[width=0.5\textwidth]{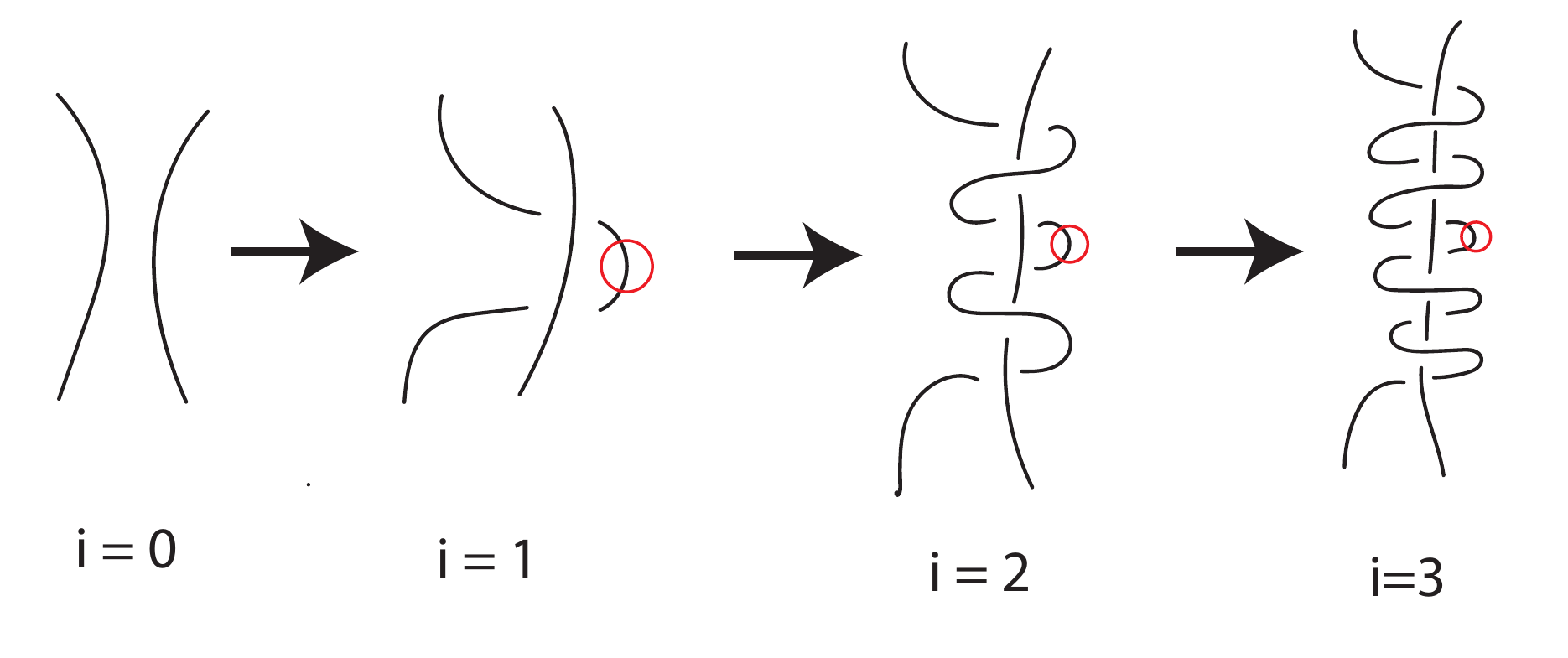}
    \caption{Increasing the number of crossings in each twist region.}
    \label{fig:increasing twist number}
\end{figure}

\begin{figure}[htbp]
    \centering
    \includegraphics[width=0.5\textwidth]{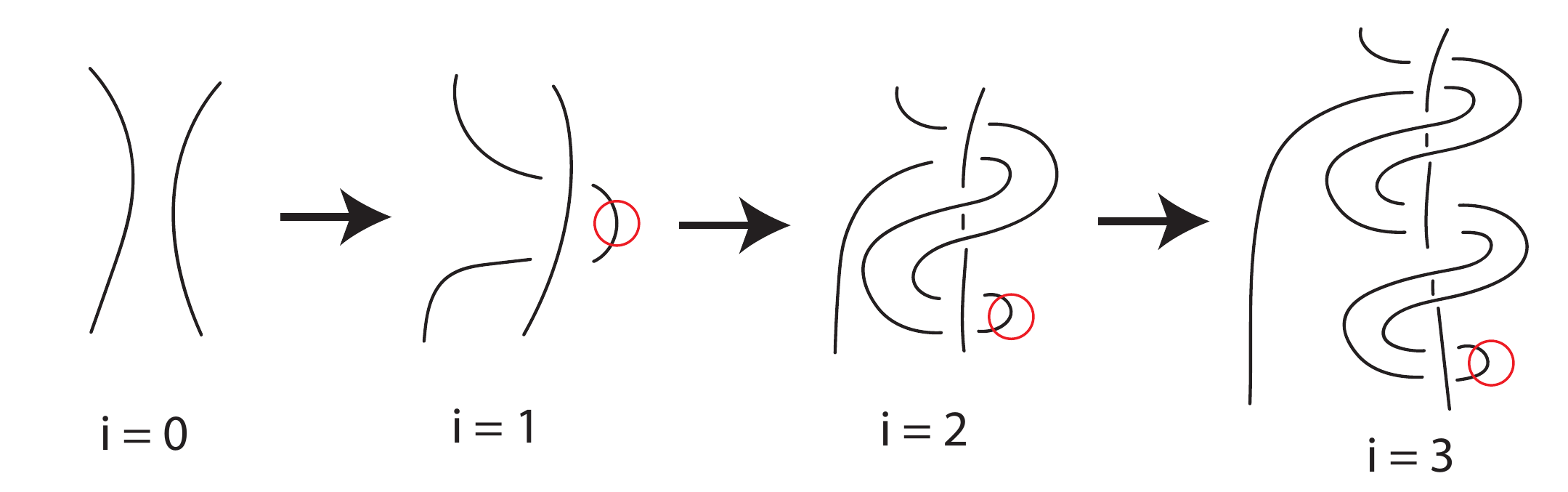}
    \caption{Increasing the number of twist regions.}
    \label{fig:increasing twist regions}
\end{figure}

\begin{figure}[htbp]
    \centering
    \includegraphics[width=0.3\textwidth]{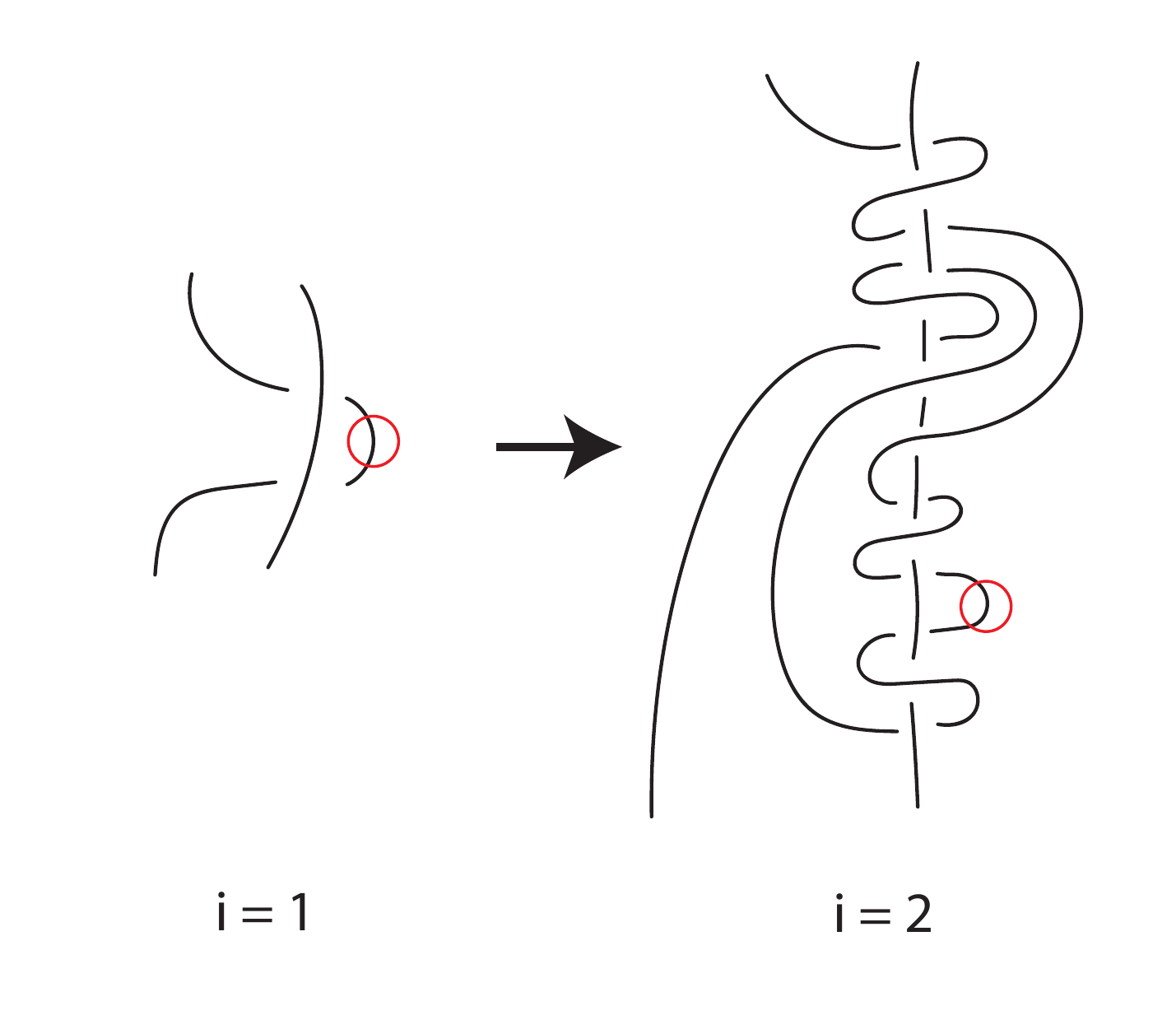}
    \caption{Increasing the number of twists per twist region, and the number of twist regions simultaneously.}
    \label{fig:increasing twist number and regions}
\end{figure}


Next, we augment each of the $2i$ twist regions exactly once, meaning we add a trivial component that wraps around it, obtaining  a new link $L_i$. Figure \ref{fig: augmenting twist region} shows the process of augmenting a twist region in parts (i) and (ii). 

We can cut along a twice-punctured disk bounded by an augmented component and twist full twists before regluing  to add or remove any even number of twists and preserve the link complement up to homeomorphism. This process can take us from $L_i$ to a link $L_i'$ that has zero or one one half twist at each augmenting component. Also, it was proved in \cite{Adams thrice punctured spheres} that in a hyperbolic 3-manifold, cutting along a twice-punctured disk and twisting a half twist before regluing preserves tg-hyperbolicity and does not change volume. Therefore, since $K_1$ was alternating,  we can set half twists of the augmenting components to have one crossing at each augmented component so that for the resulting link $L_i''$, if we remove the augmenting components to obtain $K_1'$, we have an alternating link. Because the original projection of $K_1$ on $S_1$ yielded a hyperbolic manifold, it must be that $K_1'$ is weakly prime.  

. 


\begin{figure}[htbp]
    \centering
    \includegraphics[width=0.3\textwidth]{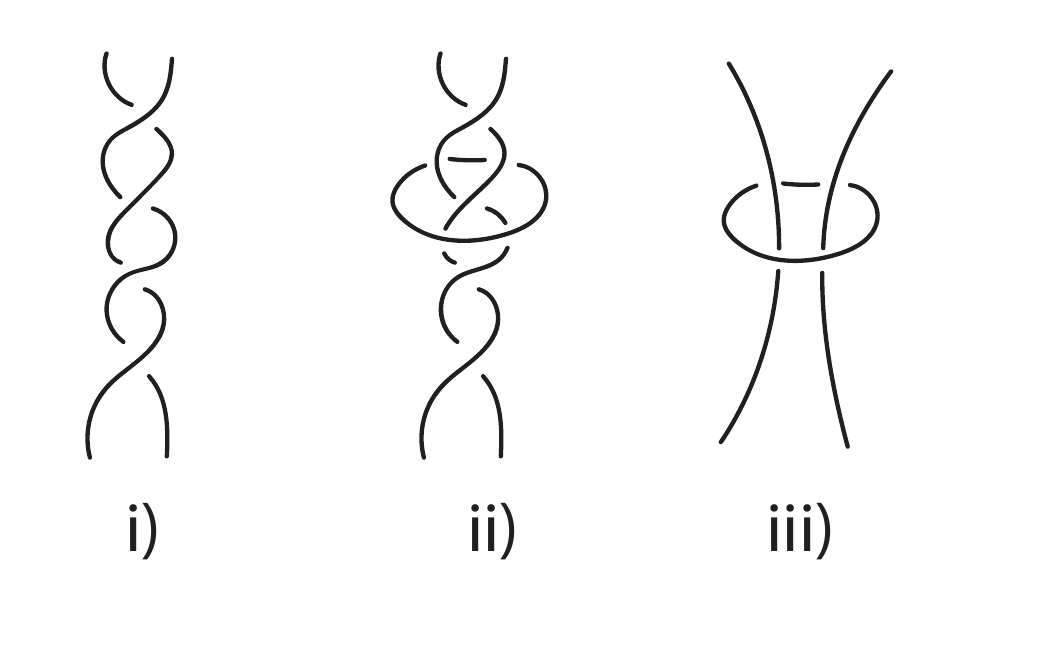}
    \caption{i) shows a twist region. ii) shows an augmented twist region, and iii) shows an augmenting component after untwisting.}
    \label{fig: augmenting twist region}
\end{figure}

Since $K_2$ is alternating, we can choose the cork $C_2$ so that the resultant link $L_i'''$ obtained from the composition $(S_1 \times I, L_i'', C_i) \#_{ns} (S_2 \times I, K_2, C_2)$  is cellular alternating and weakly prime once we drop the augmented components.  By results of \cite{Augmented alternating}, $L_i'''$ is  tg-hyperbolic. 

We know that each component of a link is a cusp of the manifold in hyperbolic 3-space. Furthermore, it was proved in \cite{Adams cusp vol} that if $M$ is an $n$-cusped hyperbolic 3-manifold of finite volume, then $vol(M) \geq n v_0$, where $v_0$ is the volume of an ideal regular tetrahedron in $H^3$. Since we can let $n = 2i$, which is the number of augmenting circles, the volumes of the complements of $L_i'''$ must be approaching infinity as $i$ grows.

For each value of $i$, we can do half-twists along the requisite augmenting components of $L_i'''$  so that there is a surgery on the augmenting components that yields the composition of ($S_1 \times I, K_1, C_1^i)$. As the longitudinal surgery coefficients grow on each, the manifolds obtained will be tg-hyperbolic and their volumes approach the volume of the complement of $L_i'''$. By letting $j$ grow large, we can make sure the necessary longitudinal surgeries will both yield tg-hyperbolic manifolds and make the volume arbitrarily close for every $i$. Therefore the compositions are tg-hyperbolic with volumes limiting to infinity.

\end{proof}

Note that we expect this theorem to hold even for non-alternating tg-hyperbolic knots and links. If so, then it shows that for any two tg-hyperbolic virtual knots, there are an infinite number of different compositions with volumes approaching infinity.

\begin{conjecture}\label{conj: nonalternating vol to infinity}
Given two tg-hyperbolic virtual knots or links $K_1, K_2$ with genera $g_1, g_2 > 0$, there exists a sequence of compositions $W_i = (S_1 \times I, K_1, C_1^i) \#_{ns} (S_2 \times I, K_2, C_2)$ such that $$\lim_{i \to \infty}(vol(W_i)) = \infty.$$
\end{conjecture}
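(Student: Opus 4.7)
The plan is to extend the proof of Theorem~\ref{thm: vol limits to infinity} to the non-alternating setting by replacing the augmented-alternating hyperbolization input with the Thurston/Kojima drilling theorem applied directly to the composition. First, I would use Theorem~\ref{thm: composition tg-hyperbolicity} to select corks $C_1^0$ and $C_2$ so that $W_0 = (S_1 \times I, K_1, C_1^0) \#_{ns} (S_2 \times I, K_2, C_2)$ is tg-hyperbolic. For each $i \geq 1$, I would then modify the projection of $K_1$ by performing $i$ disjoint Reidemeister~II moves at widely separated locations on $S_1$, producing $i$ small bigon regions, and add an augmenting circle $c_j$ encircling each bigon to obtain a link $L_i$ in $(S_1 \# S_2) \times I$ with $i$ additional trivial components.

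The key step is to show that $L_i$ is tg-hyperbolic with volume growing in $i$. Rather than invoking the augmented-alternating hyperbolization theorem used in the alternating proof, I would appeal to the hyperbolic drilling theorem: if each $c_j$ is homotopically non-trivial in $W_0$ and is not freely homotopic into any cusp of $K_1 \# K_2$ or into the totally geodesic boundary $S \times \{0,1\}$, then iteratively drilling the $c_j$ from $W_0$ yields a tg-hyperbolic manifold $L_i$ with $2 + i$ cusps. Adams' cusp-count volume lower bound \cite{Adams cusp vol} (applied, if necessary, to the double of $L_i$ along its totally geodesic boundary) then gives $vol(L_i) \geq (2+i)\, v_0 \to \infty$. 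I would then perform $1/n_j$ Dehn filling on each $c_j$ for large $n_j$; by construction, this filling reinstates $2 n_j$ half-twists into the $j$th bigon in $K_1$'s projection, so the filled manifold is a genuine composition $W_i = (S_1 \times I, K_1, C_1^i) \#_{ns} (S_2 \times I, K_2, C_2)$ for some cork $C_1^i$ on the modified projection. Thurston's hyperbolic Dehn surgery theorem then guarantees that for all $n_j$ sufficiently large, $W_i$ is tg-hyperbolic with $vol(W_i)$ arbitrarily close to $vol(L_i)$, so $vol(W_i) \to \infty$.

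The main obstacle is verifying that the augmenting circles $c_j$ can be chosen to be essential in $W_0$ — simultaneously non-null-homotopic and not freely homotopic into any cusp or boundary surface. In the alternating case this essentiality is packaged inside the augmented-alternating hyperbolization theorem used by \cite{Augmented alternating}. In general, one must prove it directly from the tg-hyperbolicity of $W_0$ and the positive genus of $K_1$. A plausible line of attack: since $K_1$ has minimal genus $g_1 > 0$, its projection on $S_1$ cannot be isotoped into a disk, so the bigon regions can be positioned so that each twice-punctured disk bounded by $c_j$ meets the knot and essential curves on $S_1$ in a non-trivial pattern, forcing $c_j$ to detect a non-trivial element of $\pi_1(W_0)$. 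Combined with the hyperbolic structure on $W_0$, this should produce geodesic representatives of the $c_j$, at which point the drilling-then-filling argument outlined above completes the proof.
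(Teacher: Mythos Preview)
The statement you are attempting to prove is labelled a \emph{conjecture} in the paper, not a theorem; the authors do not supply a proof, and in fact explicitly flag the removal of the alternating hypothesis as open. So there is no paper proof to compare against.

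Your outline is exactly the natural extension of the paper's proof of Theorem~\ref{thm: vol limits to infinity}: build projections of $K_1$ with many twist regions via Reidemeister~II, augment, bound the volume of the augmented link from below by the cusp count, and Dehn fill back to a genuine composition. The paper's argument succeeds in the alternating case because the augmented-alternating hyperbolization theorem of \cite{Augmented alternating} delivers tg-hyperbolicity of the augmented link as a black box. Your proposal replaces that black box with a ``drilling theorem,'' and this is where the genuine gap lies.

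The drilling statement you invoke is not correct as phrased. Knowing that each $c_j$ is homotopically non-trivial and not homotopic into a cusp or boundary of $W_0$ only tells you that $c_j$ is freely homotopic to a closed geodesic; it does not tell you that $c_j$ is \emph{isotopic} to a simple closed geodesic, nor does it rule out essential annuli or tori in $W_0 \setminus c_j$. What you actually need is that $W_0 \setminus \bigcup_j c_j$ is irreducible, atoroidal, anannular and has incompressible boundary, and this must be checked directly. The danger is real: each $c_j$ bounds a twice-punctured disk coming from a \emph{removable} bigon, so there are obvious candidate annuli in the drilled manifold (for instance, from the boundary torus of $c_j$ to the boundary torus of $K$, or between the boundaries of two nearby $c_j$'s) whose essentiality is not ruled out by anything you have said. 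Your iterative drilling also compounds the problem: even if $c_1$ happens to be a simple geodesic in $W_0$, the remaining $c_j$ need to be analysed in $W_0 \setminus c_1$, not in $W_0$.

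Your final paragraph is honest about this obstacle but does not resolve it; the heuristic that positive genus forces the $c_j$ to be $\pi_1$-essential is plausible and probably true, but it is several steps short of the atoroidal/anannular statement actually required. This is precisely the difficulty that keeps the statement a conjecture rather than a theorem.
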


We now consider virtual knots and links such that they are alternating in the sense that if we ignore the virtual crossings, the remaining classical crossing alternate.

\begin{lemma} \label{alternatingnonsingular}
Let $K$ be an alternating virtual knot with genus $g \geq 1$. Then $K$ has no singular curves for any projection to that surface, and all possible corks for that surface are nonsingular.
\end{lemma}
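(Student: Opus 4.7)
The plan is to argue by contradiction using a homological parity argument, and then to deduce the cork statement from the non-existence of singular curves. Suppose a singular curve $\gamma$ exists: a non-trivial simple closed curve on $S$ such that $\gamma \times \{0,1\}$ bounds an annulus $A \subset S \times I$ meeting $K$ transversely in exactly one point. Because $K$ is an alternating virtual knot with $g \geq 1$, I would fix a cellular alternating diagram of $K$ on $S$ — as the paper records, such a realization always exists — and let $G = \pi(K) \subset S$ be the $4$-valent projection graph. Cellularity together with the alternating condition provides a checkerboard $2$-coloring of the disk regions of $S \setminus G$, and this coloring will supply the eventual parity obstruction.

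The core computation is to compare two measurements of intersection. Since $A$ meets $K$ transversely in one point, $|A \cdot K| = 1$. Let $V = \gamma \times I \subset S \times I$ be the vertical annulus. Then $A$ and $V$ share the same boundary, so the difference $[A] - [V]$ lives in the image of $H_2(S \times I) \cong \mathbb{Z}$ inside $H_2(S \times I, \partial(S \times I))$. This image is generated by the class of a horizontal slice $P = S \times \{1/2\}$, and a knot in $S \times I$ coming from a diagram does not wind in the $I$-direction, so $[P] \cdot [K] = 0$ and consequently $A \cdot K = V \cdot K$. A direct transversality argument identifies $V \cdot K$ with the signed intersection number $\gamma \cdot G$ computed on $S$: points of $V \cap K$ project bijectively to $\gamma \cap G$ with matching local signs. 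After isotoping $\gamma$ on $S$ to be transverse to $G$ and miss the crossings, I conclude $|\gamma \cdot G| = 1$; since signed and unsigned transverse intersection counts agree in parity, $|\gamma \cap G|$ is odd.

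The contradiction is immediate from the checkerboard coloring: each transverse intersection of $\gamma$ with $G$ moves $\gamma$ from a region of one color to a region of the other, and because $\gamma$ is a closed curve returning to its initial region, $|\gamma \cap G|$ must be even. This rules out singular curves. For the cork statement, any singular cork $C$ would yield a singular curve. By the definition of a singular cork there is a disk $E \subset M \pmc C$ whose boundary contains two arcs on $\partial C$ that separate the endpoints of $K \cap \partial C$, together with one arc each on the inner and outer surfaces. Using that $K \cap C$ is an unknotted arc, one finds a disk $E' \subset C$ meeting $K$ in exactly one point and sharing those two $\partial C$-arcs with $E$; then $E \cup E'$ is an annulus in $S \times I$ punctured once by $K$, whose two boundary circles project to a common simple closed curve on $S$. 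The hypothesis that $E$ cannot be isotoped into $\partial C$ forces this curve to be non-trivial, giving a singular curve and contradicting the first part. Thus every cork on $S$ is nonsingular.

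I expect the main delicate point to be the integer-level identification $A \cdot K = \gamma \cdot G$, since the parity contradiction collapses if this is only known up to congruence. The verification needs both the comparison of $A$ with $V = \gamma \times I$ modulo $H_2(S \times I) \cong \mathbb{Z}$ and the vanishing $[P] \cdot [K] = 0$, the latter coming from the fact that a diagram knot in $S \times I$ can be isotoped into an arbitrarily small neighborhood of the projection surface without winding in the $I$-factor.
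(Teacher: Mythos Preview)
Your proof is correct and lands on the same checkerboard contradiction as the paper, but the route is genuinely different. The paper argues more directly: the cellular alternating diagram is checkerboard colorable, this coloring persists under Reidemeister moves, and a singular curve ``would intersect only one edge,'' making a region adjacent to itself. What the paper leaves implicit --- why a singular curve in the sense of Definition~\ref{def: gen sing curve} must cross some projection in exactly one edge --- you make explicit via homology: comparing the punctured annulus $A$ to the vertical annulus $V=\gamma\times I$ modulo $H_2(S\times I)\cong\mathbb{Z}\langle[P]\rangle$ and using $[P]\cdot[K]=0$ gives $A\cdot K=\gamma\cdot G=\pm 1$, hence odd geometric intersection, hence the parity obstruction. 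Your argument thus works for a single fixed cellular alternating projection and avoids appealing to Reidemeister invariance of the coloring, at the cost of a short homological computation; the paper's version is shorter but leans on an isotopy of $A$ to a vertical annulus that it does not spell out.

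One small wrinkle in your cork argument: the two boundary circles of $E\cup E'$ need not literally project to the \emph{same} curve on $S$, since the arcs of $\partial E$ on $S\times\{0\}$ and $S\times\{1\}$ can differ as arcs in $S\setminus D$. They are isotopic on $S$ (they cobound the annulus in $S\times I$), so this is easy to repair --- either isotope to make them agree, or simply rerun your homological argument directly on $E\cup E'$ rather than invoking the first part verbatim. Relatedly, the non-triviality of the boundary curve follows more cleanly from the once-punctured condition itself (capping off with disks on $\partial(S\times I)$ would yield a sphere meeting $K$ once) than from the hypothesis that $E$ is not isotopic into $\partial C$.
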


\begin{proof}
In its minimal genus thickened surface, the knot must have all regions simply connected, or otherwise we could find a reducing annulus. Given a connected alternating projection on an orientable surface with all regions topological disks, it can always be checkerboard colored. In particular, as in Figure \ref{fig: orientation}, each region has either a clockwise rotation or a counterclockwise rotation, and two regions that share an edge must have opposite rotations. So if we color regions with a clockwise rotation to be white and regions with a counterclockwise region to be black, we obtain a checkerboard coloring. Now, as we perform Reidemeister moves to the link projection in the surface, no longer using clockwise or counterclockwise rotations to determine coloring of regions,  we can maintain the fact we have a checkerboard coloring.  

But if there were a singular curve, it would intersect only one edge, meaning a region would be adjacent to itself, contradicting the fact adjacent regions must have distinct colors.

\begin{figure}[htbp]
    \centering
    \includegraphics[width=0.6\textwidth]{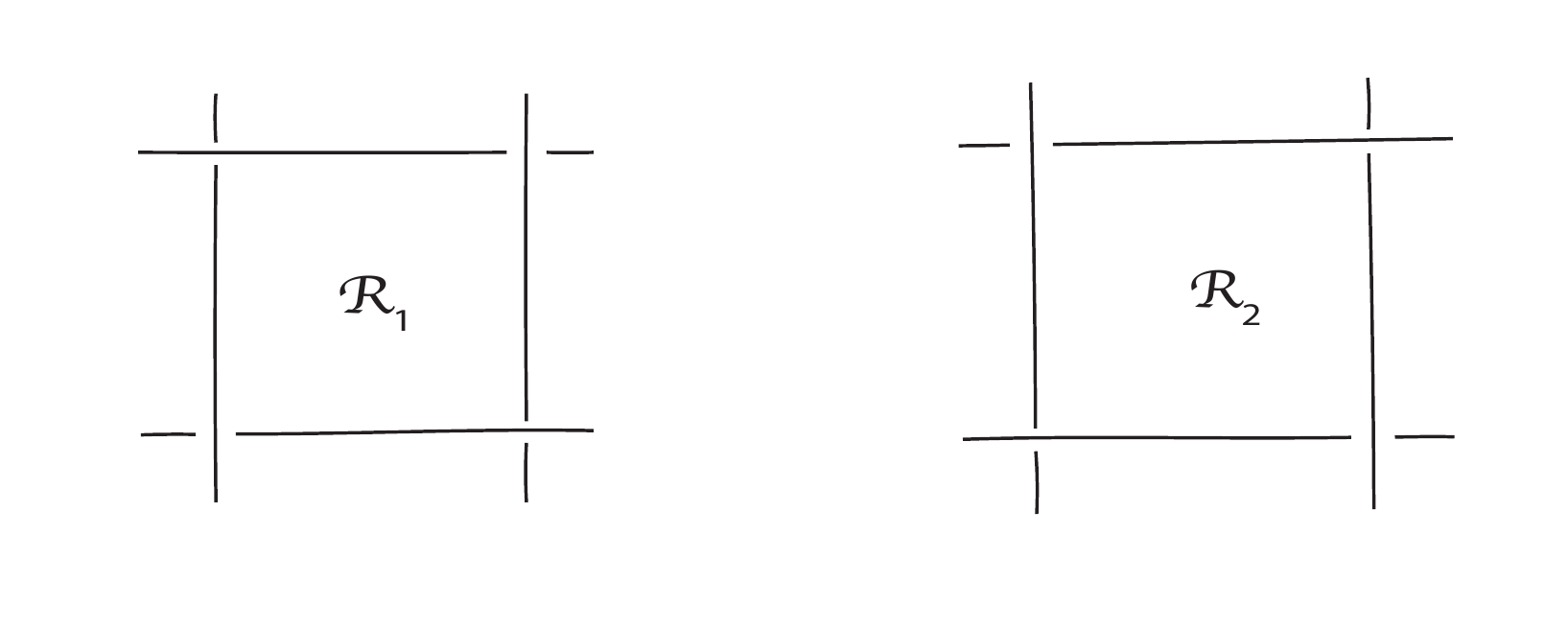}
    \caption{$R_1$ is has a clockwise orientation, and $R_2$ has a counterclockwise orientation.}
    \label{fig: orientation}
\end{figure} 

\end{proof}

\begin{theorem}\label{thm: alternatingcork} Let $K$ be a non-classical alternating virtual knot that has a weakly prime projection on $S$. Then for any choice of alternating cork, $(S \times I, K, C)$ is hyperbolically composable.
    \end{theorem}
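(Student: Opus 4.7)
The plan is to show that the nonsingular cork double $D_{ns}(S \times I, K, C)$ is tg-hyperbolic. Since this cork double is, by definition, the nonsingular composition of $(S \times I, K, C)$ with its reflection $(S \times I, K, C)^R$, once tg-hyperbolicity is established the reflection will serve as the companion triple witnessing hyperbolic composability. By Lemma \ref{alternatingnonsingular} the alternating cork $C$ is automatically nonsingular, so the nonsingular cork double is well-defined; and since $K$ has a weakly prime cellular alternating projection on $S$, the triple $(S \times I, K, C)$ is tg-hyperbolic by Theorem 1 of \cite{small18}. The strategy for tg-hyperbolicity of the cork double is to exhibit on $S \# S$ a weakly prime cellular alternating projection of $K \# K^R$ and invoke that theorem once more.

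Alternation and cellularity of the composed projection are the easy ingredients. Reflection across a plane perpendicular to the projection plane preserves the projection graph on $S$ while swapping under- and over-strands at every crossing; since $C$ sits on an edge of the projection where $K$ passes from the under-strand at one endpoint crossing to the over-strand at the other (by alternation), the cork identification can be chosen so that the two arcs of $K$ and $K^R$ meet across the seam with alternation preserved. Cellularity is inherited from the cellularity of each factor projection: the cork disk $D_C$ is a small sub-disk of the projection surface, so removing it from a disk region leaves a disk region, and gluing two such modified regions (one from each factor) along the cork boundary arc produces a disk.

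The main obstacle is weak primeness. Suppose, for contradiction, that $D \subset S \# S$ is a disk containing a classical crossing of $K \# K^R$ whose boundary transversely meets the projection in exactly two points, and let $\gamma \subset S \# S$ be the seam circle of the connect sum, which transversely meets $K \# K^R$ in exactly two points $p_1, p_2$. We isotope $D$ to minimize $|D \cap \gamma|$, in general position so that $\partial D$ avoids $\{p_1, p_2\}$. Because $\gamma$ is not null-homotopic in $S \# S$ (which has genus at least $2$), no closed-curve component of $D \cap \gamma$ can exist: such a component would be all of $\gamma$ and would bound a subdisk of $D$. If $D \cap \gamma = \emptyset$, then $D$ lies inside a single copy of $S$ minus the cork disk and thus becomes a disk in $S$ with boundary meeting $K$ in two points and containing a crossing, directly contradicting weak primeness of $K$ on $S$. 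Otherwise, let $\alpha$ be an arc of $D \cap \gamma$ outermost in $D$, cutting off a bigon $E \subset D$ with $\partial E = \alpha \cup \beta$ and $\beta \subset \partial D$. After a small isotopy we take $\alpha$ to avoid $p_1$ and $p_2$, so $\alpha \cap (K \# K^R) = \emptyset$; capping $E$ off by a thin strip in the cork disk, chosen on one side of the cork's knot arc, produces a disk $\tilde E \subset S$ with $|\partial \tilde E \cap K| = |\beta \cap K| \in \{0, 2\}$ by parity. Weak primeness of $K$ on $S$ (together with cellularity in the case $|\beta \cap K| = 0$) forces $\tilde E$ to contain no crossings, whence neither does $E$, so $E$ is a bigon across which $\partial D$ may be isotoped to reduce $|D \cap \gamma|$, contradicting minimality. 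The main technical subtlety will be in this outermost-arc surgery: verifying the bigon isotopy strictly lowers the intersection count while keeping $\partial D$ transverse to $K \# K^R$ and its intersection number with the projection fixed at two.
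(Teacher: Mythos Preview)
Your overall strategy---produce a weakly prime cellular alternating projection of a composition on $S\#S$ and invoke Theorem 1 of \cite{small18}---is exactly the paper's approach, and your outermost-arc argument for weak primeness is a reasonable fleshing-out of what the paper merely asserts in one line. However, there is a genuine error in your alternation step which breaks the proof as written.

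You claim that reflection across a plane perpendicular to the projection plane swaps over- and under-strands at every crossing. It does not: such a reflection fixes the height coordinate and therefore preserves which strand is over at each crossing (it is reflection \emph{in} the projection plane that swaps over and under). Consequently, in the cork double $D_{ns}(S\times I, K, C)$ the seam identification matches the under-end of the cork arc in $K$ to the under-end of the cork arc in $K^R$, and over-end to over-end. The two edges crossing the seam are therefore non-alternating, and Theorem 1 of \cite{small18} does not apply to this projection of $K\#K^R$. Your phrase ``the cork identification can be chosen'' does not rescue this: the cork double comes with a fixed gluing (the one making the seam the fixed set of the reflection involution), and that gluing is precisely the bad one.

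The paper avoids this by composing $(S\times I, K, C)$ not with its reflection but with \emph{itself}, placing the second copy rotated by $180^\circ$ in the projection plane. Rotation again preserves over/under at each crossing, but it swaps the two ends of the cork arc in position, so the natural gluing now matches the over-end of one copy to the under-end of the other and the composite diagram is genuinely alternating. Your cellularity and weak-primeness arguments transfer to that composition essentially unchanged, and hyperbolic composability follows. Note that this establishes the theorem but not the stronger statement you were aiming for, namely tg-hyperbolicity of the cork double itself.
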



    \begin{proof} The knot $K$ appears on $S$ as a cellular alternating link. Thus, we know $(S \times I, K, C)$ is tg-hyperbolic by Theorem 1 of \cite{small18} or Theorem 1.1 of  \cite{HP}.  We show that composition of $(S \times I,K, C)$ with itself will be tg-hyperbolic.  Note that the cork hits a single strand of the link such that to one side of the cork, the outgoing strand goes under a crossing and to the other side, the outgoing strand goes over a crossing. When we compose the knot with itself, we rotate a copy of the initial projection by $180^\circ$ and therefore each of the two connections we make matches a strand that goes over a crossing to a strand that goes under a crossing, so the resulting projection is alternating. Thus, the composition is a reduced cellular alternating projection of a knot $K'$. The fact $K$ is weakly prime implies that $K'$ is weakly prime as well. Then again, by Theorem 1 of \cite{small18} or Theorem 1.1 of \cite{HP}, the  resulting knot $K'$ is tg-hyperbolic, and $(S \times I,K, C)$ is hyperbolically composable.
    \end{proof}

\section{Explicit Examples} \label{sect: explicit examples}

We include a few explicit examples here. We also include Table \ref{table: examples} of volumes of virtual knots with corks removed to provide volumes for the examples and to allow the reader to play with different compositions without having to calculate volumes of manifolds. In the first column, if the cork is nonsingular, the nonsingular volume $vol_{ns}(S \times I, K, C)$ is listed and the second volume has N/A for not applicable.  All of these examples are hyperbolically composable. 

If the cork is singular, the volume $\frac{1}{4}vol(D(D_{ns}(S_1 \times I, K_1, C_1)) \pmc T)$ is listed in the first column and the singular volume $vol_{s}(S \times I, K, C)$ is listed in the second column. 

\begin{example}
The first example we include is a nonsingular composition of the virtual trefoil knot (the knot 2.1 in Jeremy Green's knot table \cite{Knot Table}) with the virtual figure-eight knot (the knot 3.2 in Jeremy Green's knot table \cite{Knot Table}). The composition is shown in Figure \ref{fig: ns tref with ns fig 8 example}. It is easy to see that composing with diagrams of virtual knots is equivalent to composing in thickened surfaces. 

\begin{figure}[htbp]
    \centering
    \includegraphics[width=0.7\textwidth]{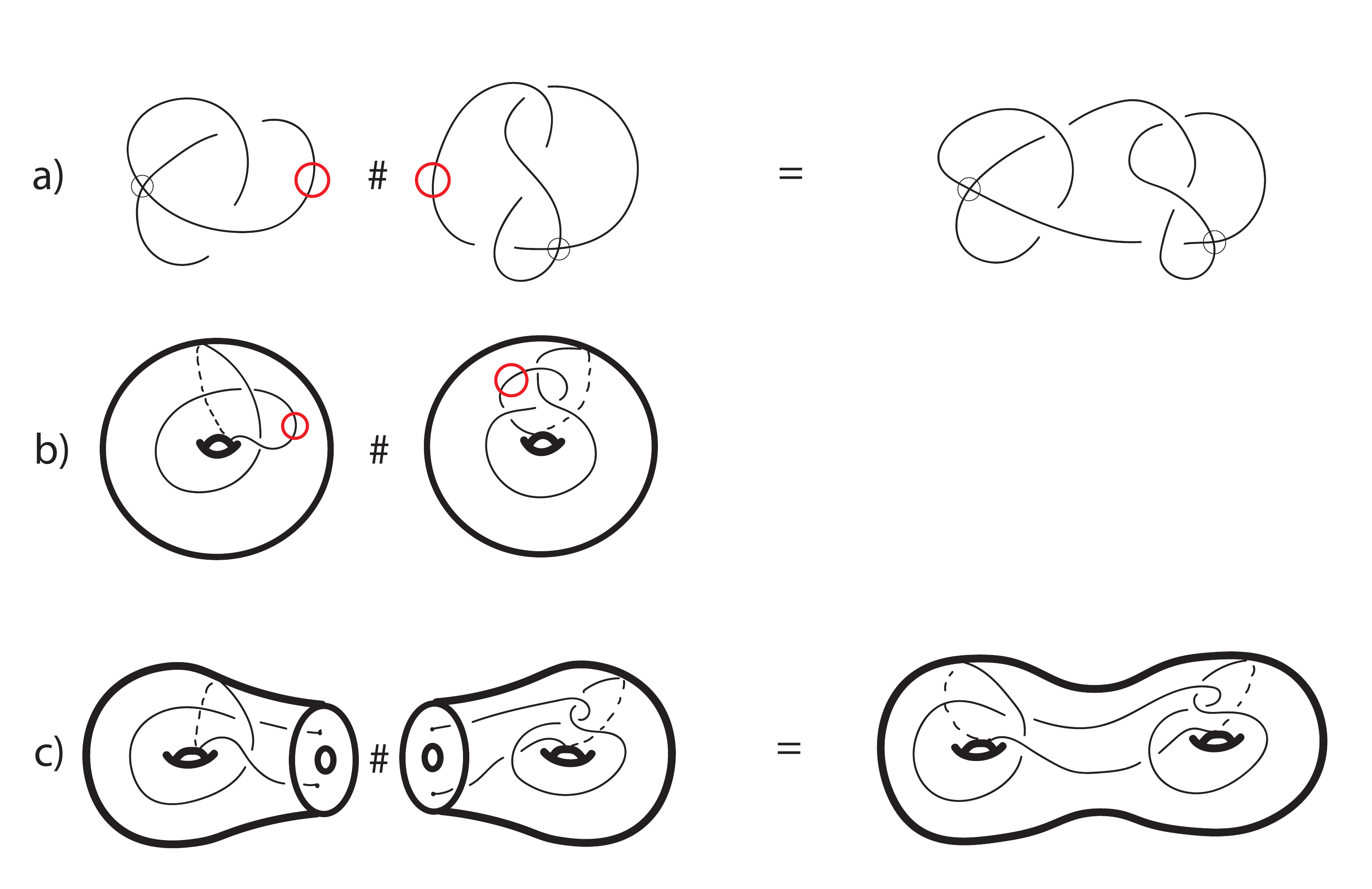}
    \caption{The composition in diagrams appears in a).  The knots appear in their thickened surfaces in b), and the composition of the knot complements appears in c). The corks are indicated by red circles.}
    \label{fig: ns tref with ns fig 8 example}
\end{figure}

Setting $K_1$ to be the virtual trefoil and $K_2$ to be the virtual figure-eight, Theorem \ref{thm: nonsing compose nonsing vol bound} says that the volume of the composition should be bounded below by the sum of half the volumes of the respective cork doubles. And indeed, we see that $K_1$ contributes a bound of 9.4158416835 and $K_2$ contributes a bound of 13.5043855968, for a total lower bound on the volume of the composition of 22.92022727. Computing the actual volume of the composition in SnapPy yields a volume of 26.236005. 
\end{example}




\begin{example}
Here, we give an example of singular composition by again composing the virtual trefoil  with the virtual figure-eight knot, but this time with singular corks as in  Figure \ref{fig: trefoil compose singular fig 8}.  Again, the composition of diagrams is equivalent to the composition in thickened surfaces. 

 Since both knots are genus one and the composition is singular, we can use Corollary \ref{cor: genus 1 sing comp} to say the sum of the volumes of the original knots should be the resulting volume. The volume of the virtual trefoil is 5.3334895 and the volume of the virtual figure-eight is 7.7069118. Computing the volume in SnapPy of the composition  does give a volume  of  13.0404013, which is their sum, as expected. 

\begin{figure}[htbp]
    \centering
    \includegraphics[width=0.6\textwidth]{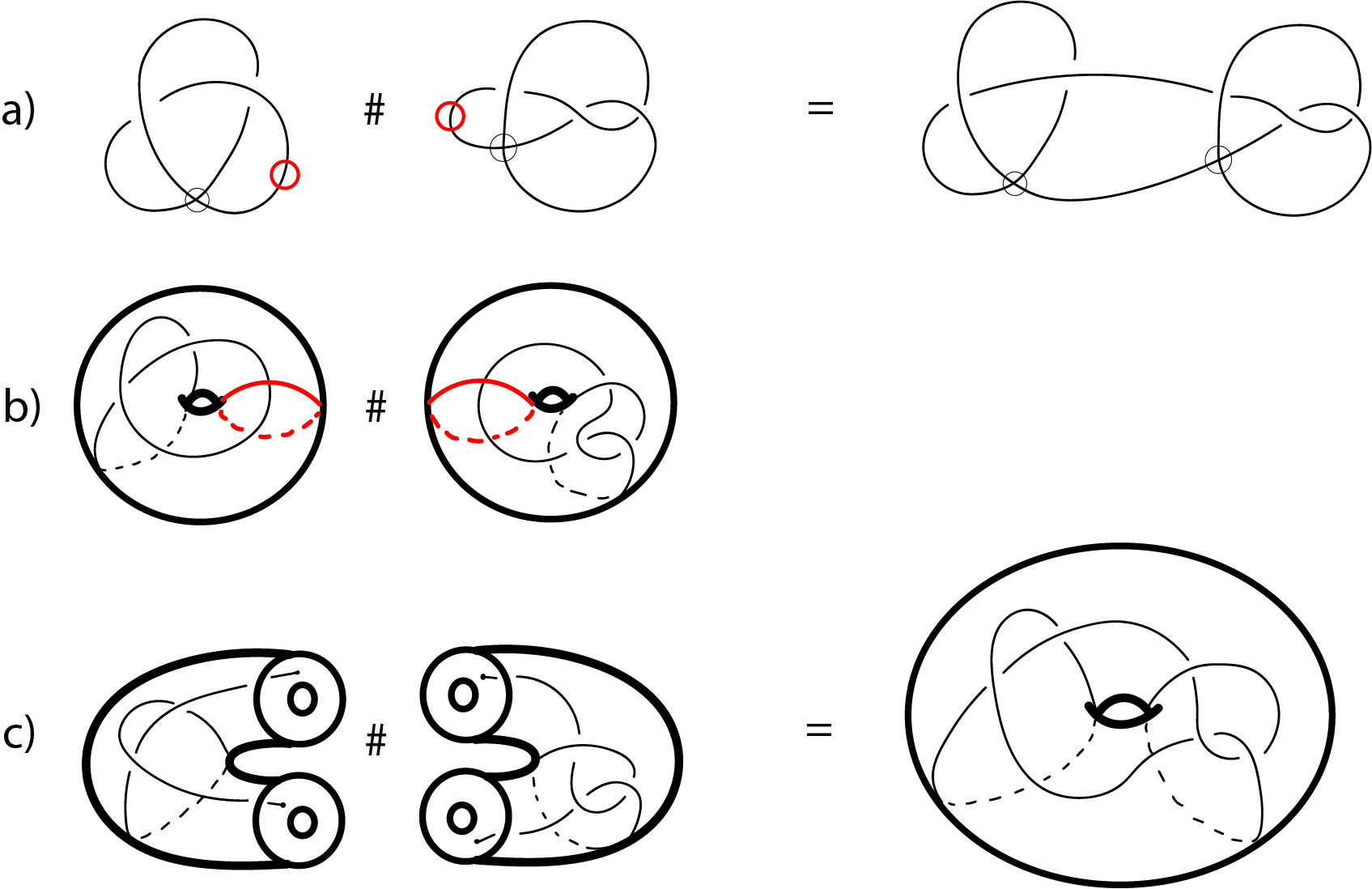}
    \caption{An example of singular composition of the virtual trefoil with the virtual figure-eight knot.}
    \label{fig: trefoil compose singular fig 8}
\end{figure}

\end{example}


\begin{example}
Next, we compose the virtual trefoil $K_1$ with the virtual figure-eight knot $K_2$, only this time we take the cork for the virtual trefoil knot to be singular, while the cork for the virtual figure-eight knot is nonsingular.  Thus the  composition is nonsingular. We show it in Figure \ref{fig: trefoil compose nonsingular fig 8}.

\begin{figure}[htbp]
    \centering
    \includegraphics[width=0.6\textwidth]{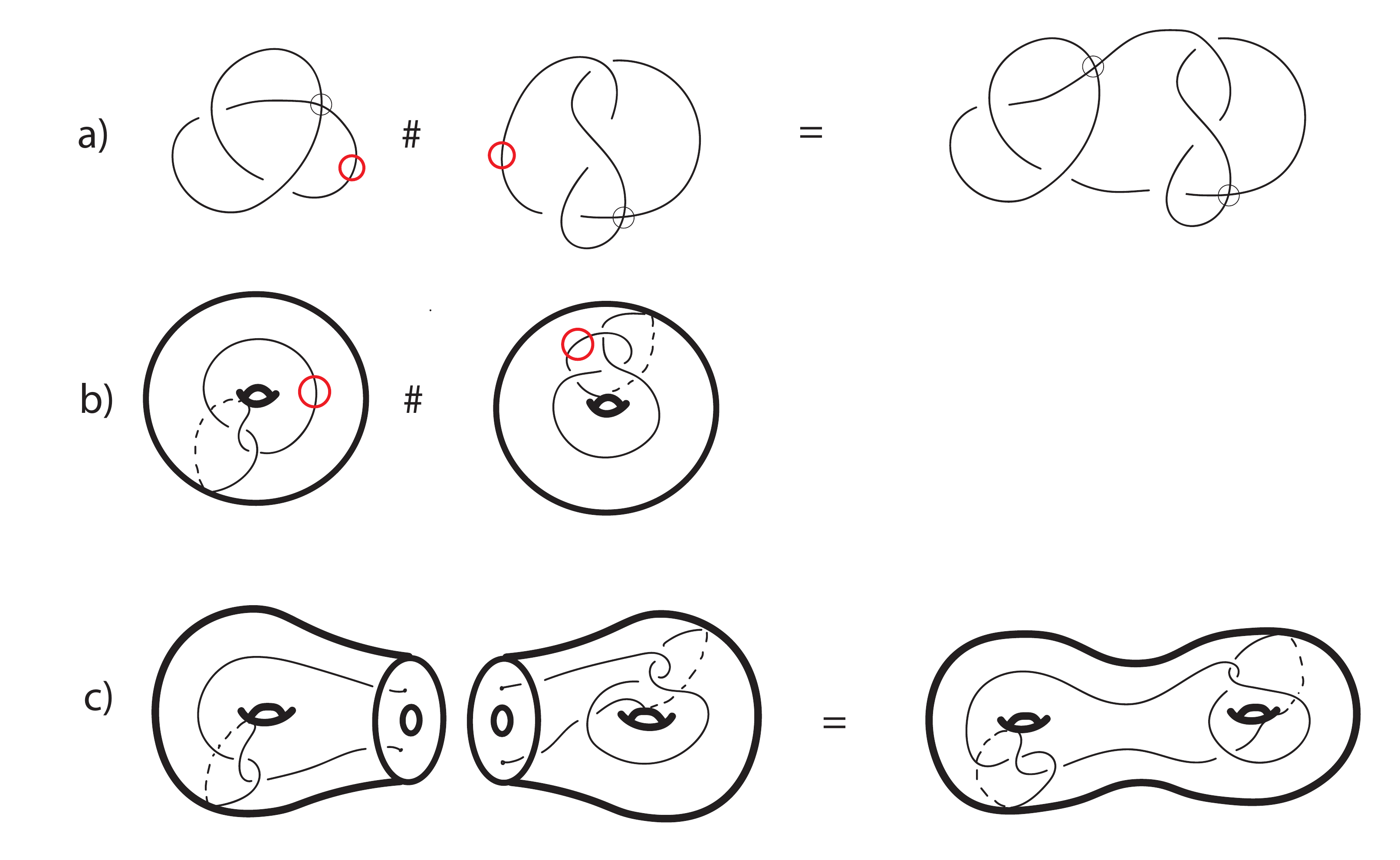}
    \caption{An example of nonsingular composition between the virtual trefoil with a singular cork and the virtual figure-eight knot with a nonsingular cork.}
    \label{fig: trefoil compose nonsingular fig 8}
\end{figure}


We obtain the following volume bounds from Theorem \ref{thm: nonsing compose sing vol bound}: 
\begin{align*}
    vol((S_1 \times I, K_1, C_1) \#_{ns} (S_2 \times I, K_2, C_2)) & \geq \frac{1}{4}vol(DM_1' \pmc T) + \frac{1}{2}vol(D_{ns}(S_2 \times I, K_2, C_2)) \\
    & \geq 10.149416064 + 13.5043855968\\
    & \geq 23.6538016608 
\end{align*}
Computing the volume of $(S_1 \times I, K_1, C_1) \#_{ns} (S_2 \times I, K_2, C_2)$ in SnapPy, we see that the computed volume is greater than the sum of the bounds: $26.3735 \geq 23.6538016608$. 
\end{example}

\newpage
\begin{table}[htbp] 
\begin{center}
 \begin{tabular}{||c | c | c | c ||} 
 \hline
 Diagram of $K$ & $(S \times I) \setminus K$ & $vol_{ns}(S \times I K, C)$ & $vol_s(S \times I, K, C)$ \\ [0.5ex] 
 \hline 
 \includegraphics[scale=0.4]{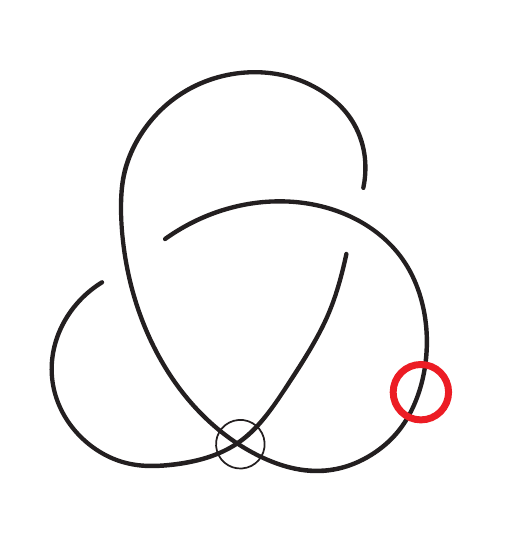} & \includegraphics[scale=0.4]{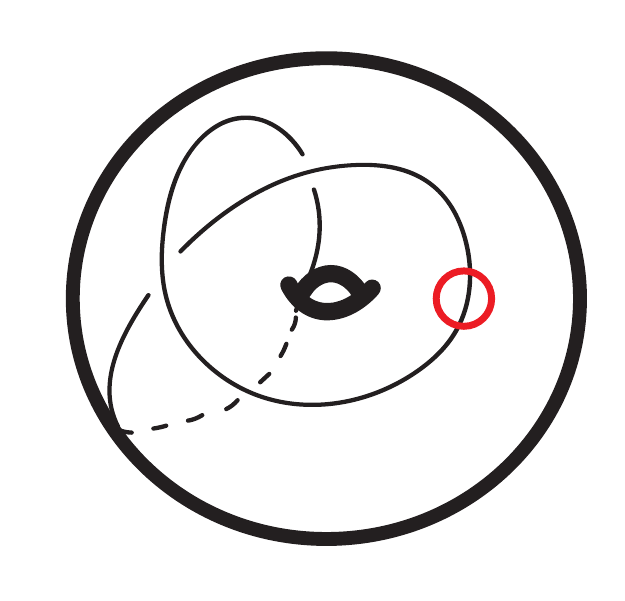} & 10.149416064 
 & 5.33348956690 \\
 \hline 
 \includegraphics[scale=0.4]{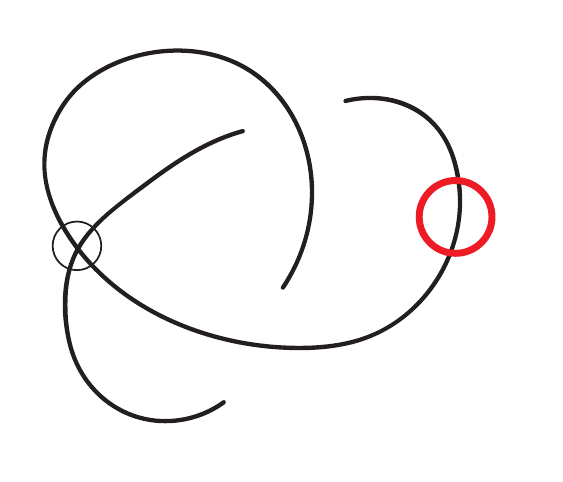} & \includegraphics[scale=0.4]{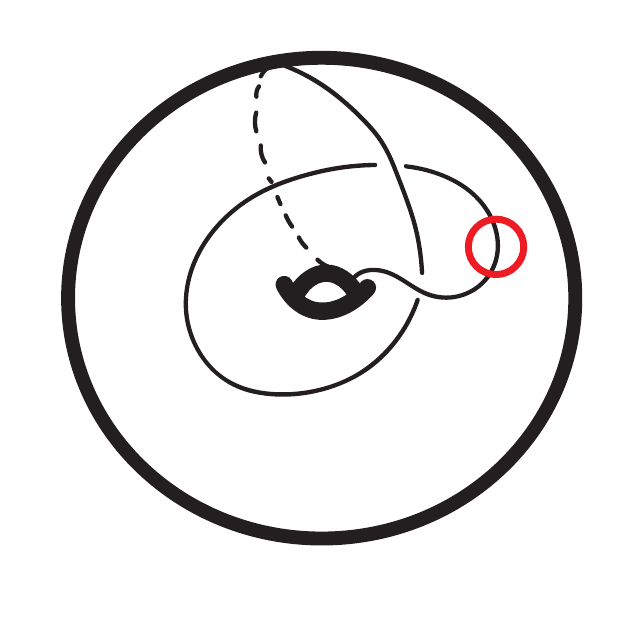} & 9.4158416835 & N/A \\
 \hline 
 \includegraphics[scale=0.4]{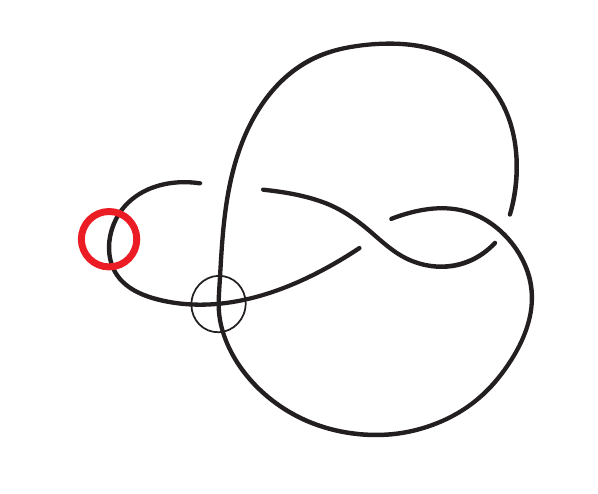} & \includegraphics[scale=0.4]{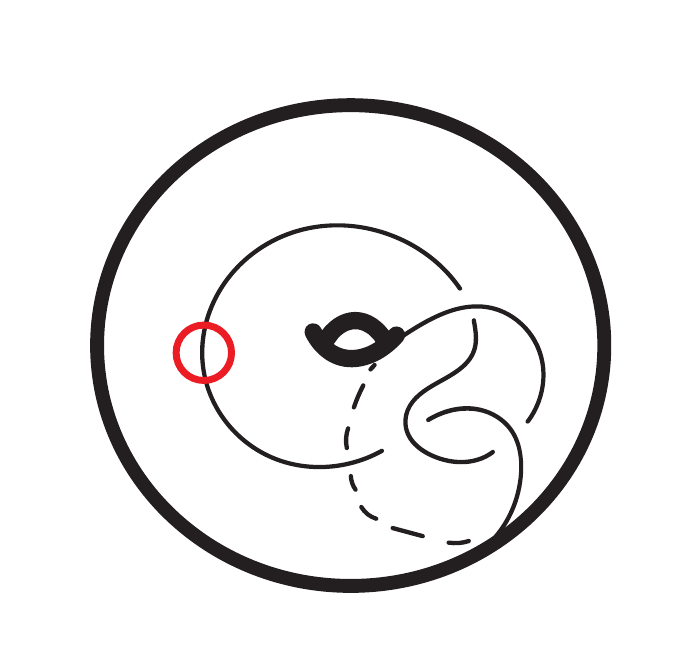} & 12.8448530045 
 & 7.70691180281 \\
 \hline
 \includegraphics[scale=0.4]{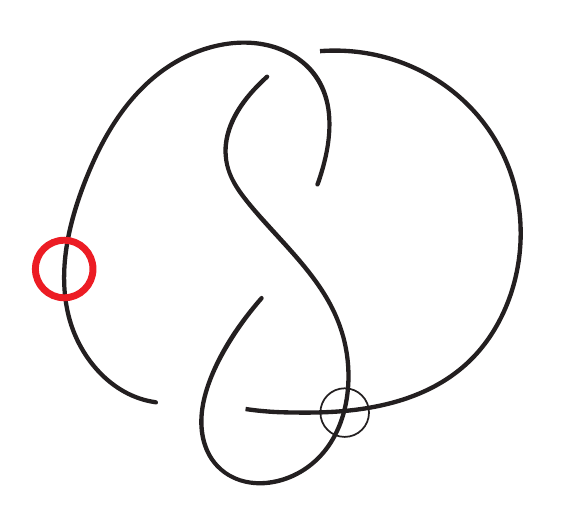} & \includegraphics[scale=0.4]{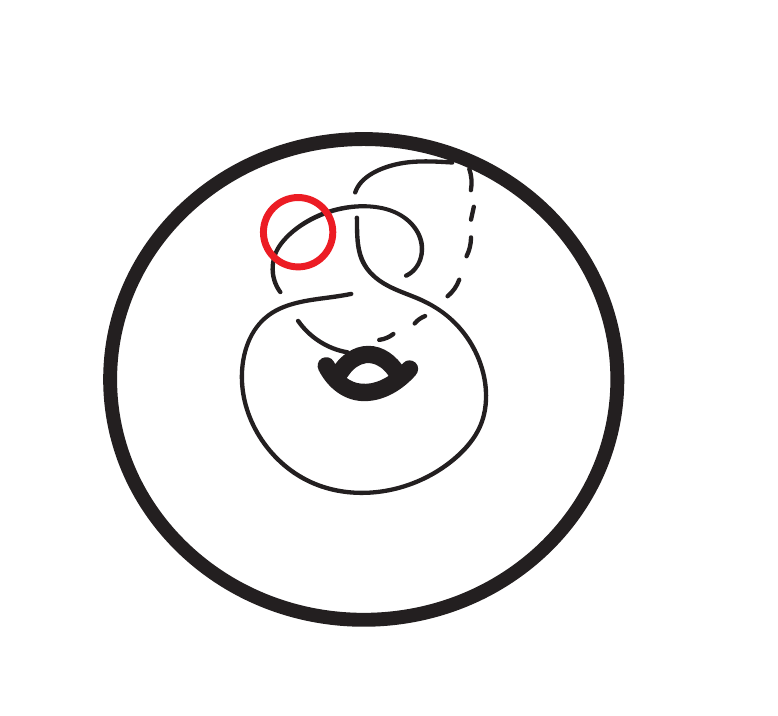} & 13.5043855968 & N/A \\
 \hline 
  \includegraphics[scale=0.4]{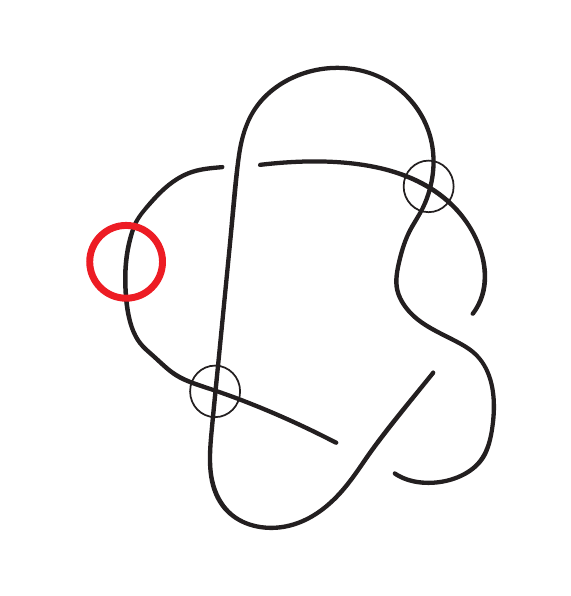} & \includegraphics[scale=0.4]{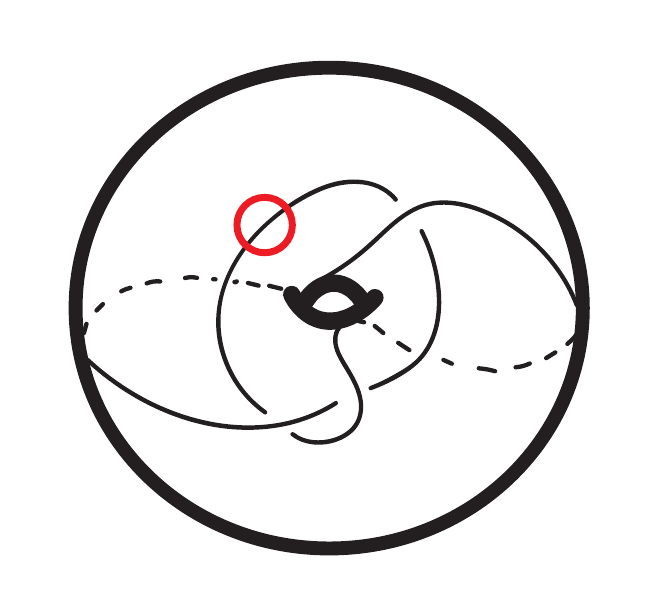} & 12.9446980685 & N/A \\
 \hline 
   \includegraphics[scale=0.4]{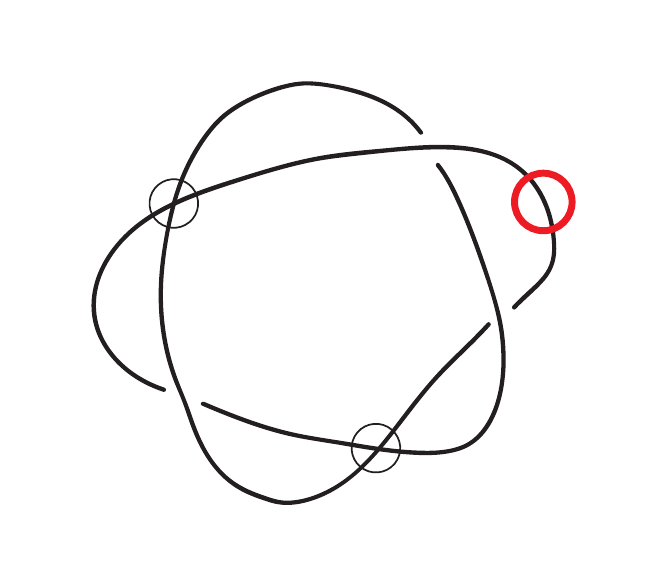} & \includegraphics[scale=0.4]{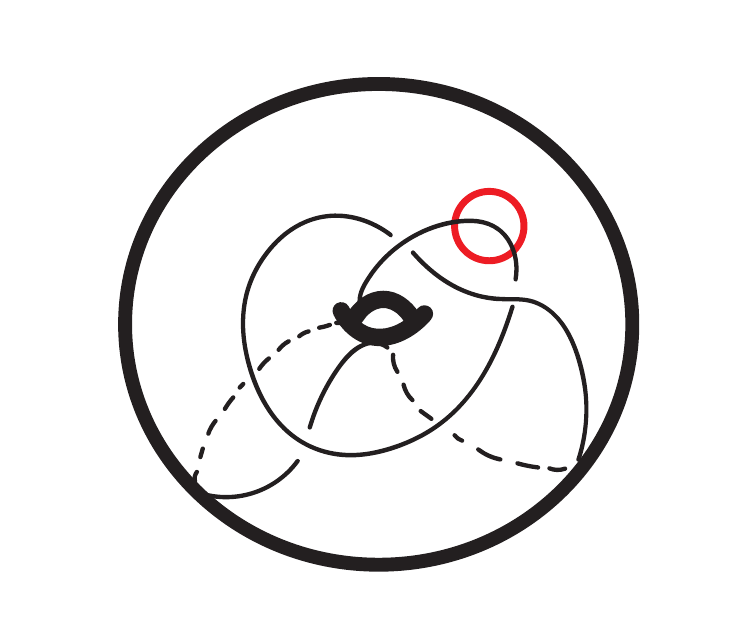}  & 15.8327412531 & N/A \\
 \hline  
\end{tabular}
\end{center}
\caption{$vol_{ns}(S \times I, K,C)$ and $vol_{s}(S \times I, K, C)$ for some examples.}
\label{table: examples}
\end{table}

\newpage 

\newpage

\newpage



\begin{thebibliography}{GJMMNPP}

\bibitem[1]{Adams cusp vol} 
\newblock Adams, C., \emph{Volumes of N-Cusped Hyperbolic 3-Manifolds}, Journal of the London Mathematical Society, Volume s2-38, Issue 3, December 1988, pp. 555–565. 

\bibitem[2]{Adams thrice punctured spheres}
\newblock Adams, C., \emph{Thrice punctured spheres in hyperbolic 3-manifolds} Transactions of the American Mathematical Society Vol. 287, No. 2 (Feb., 1985), pp. 645-656. 

\bibitem[3]{Augmented alternating}
\newblock Adams, C., Capovilla-Searle, M., Li, D., Li, Q., McErlean, J. , Simons, A., Stewart, N., and Wang, X.. \textit{Augmented Cellular Alternating Links in Thickened Surfaces are Hyperbolic}, ArXiv 2107.05406 (2021). 

\bibitem[4]{tg hyperbolicity in S cross I}
\newblock Adams, C., Eisenberg, O., Greenberg, J., Kapoor, K., Liang, Z., O’Connor, K., Pacheco-Tallaj, N. and Wang, Yi. \emph{TG-Hyperbolicity of Virtual Links}, J. Knot Theory Ramifi. 28(12), 1950080 (2019).

\bibitem[5]{small18}
 \newblock C. Adams, C. Albors-Riera, B. Haddock, Z. Li, D. Nishida, B. Reinoso and L. Wang, \emph{Hyperbolicity of Links in Thickened Surfaces}
Topology and its Applications,
256 (1),2019, pp. 262-278.

\bibitem[6]{AS}
 \newblock C. Adams, D. Santiago, \emph{Composition Properties of Hyperbolic Links in Handlebodies}
ArXiv 2303.02477(2023).

 \bibitem[7]{AST}
 \newblock Agol, I., Storm, P., and Thurston, W., \emph{Lower bounds on volumes of hyperbolic Haken 3-manifolds}, J. Amer. Math. Soc. 20 , no. 4, (2007). 1053–1077. With an appendix by Dunfield, N. 
 
 \bibitem[8]{CFW}
 \newblock Calegari, D., Freedman, M., and Walker, S., \emph{Positivity of the Universal Pairing in 3-Manifolds}, J. Amer. Math. Soc. Volume 23, Number 1, (2010), 107–188.

\bibitem[9]{Stable equivalence}
\newblock Carter, J.S., Kamada, S., Saito, M., \textit{Stable Equivalence of Knots on Surfaces and Virtual Knot Cobordisms}, Knots 2000 Korea, Vol. 1, (Yongpyong), JKTR 11. No. 3, (2002), 311-322.

\bibitem[10]{CDG}
\newblock Culler, M., Dunfield, N., Goerner, M., \emph{SnapPy computer program}, available at https://snappy.math.uic.edu/

\bibitem[11]{Knot Table} 
\newblock Green, J., \emph{A Table of Virtual Knots}, https://www.math.toronto.edu/~drorbn/Students/GreenJ/

\bibitem[12]{Gromov}
Gromov, M., \textit{Volume and Bounded Cohomology}, Publications mathématiques de l’I.H.É.S., tome 56 (1982), 5-99. 

\bibitem[13]{HP}
\newblock Howie, J., Purcell, J., \textit{Geometry of alternating link on surfaces},Trans. A.M.S., vol. 373, no. 4., (2020) 2349-2397. 


\bibitem[14]{KK}
\newblock Kamada, N. and Kamada, S., \emph{Link Diagrams and Virtual Knots}, Journal of Knot Theory and Its RamificationsVol. 09, No. 01, (2000) 93-106. 

\bibitem[15]{Kauffman virtual knot intro}
\newblock  Kauffman, L., \textit{Virtual knot theory}, European J. Comb. 20 (1999),  663–690.

 \bibitem[16]{KM}
 \newblock Kauffman, L. and Manturov, V.,  \emph{Virtual knots and links}, Proc. Steklov Inst. Math. 252 (2006), 104–121.
 
 \bibitem[17]{Kuperberg}
 \newblock Kuperberg, G., \emph{What is a virtual link?}, Algebr. Geom. Topol. 3 (1) (2003), 587-591.
 
\bibitem[18]{Mostow}
\newblock Mostow, G.D.,  \emph{Quasi-conformal mappings in n-space and the rigidity of the hyperbolic space forms}, Publ. Math. IHES, 34, (1968) 53–104. 

\bibitem[19]{Prasad}
\newblock Prasad, G.,  \emph{Strong rigidity of Q-rank 1 lattices}, Inventiones Mathematicae, 21 (4) (1973) 255–286.

\bibitem[20]{Thurston}
\newblock Thurston, W., \textit{The Geometry and Topology of 3-manifolds}, Princeton University Press, 1997.


\bibitem[21]{Thurston hyperbolization}
\newblock Thurston, W., \emph{Three-dimensional manifolds, Kleinian groups and hyperbolic geometry}, Bull. Amer. Math. Soc. (N.S.), 6(3) (1982), 357–381, .



\end{thebibliography}
\end{document}